\newcommand{\UP}{\blacktriangle}
\newcommand{\DOWN}{\blacktriangledown}
\newcommand{\Up}{\vartriangle}
\newcommand{\Down}{\triangledown}
\theoremstyle{plain}
\newtheorem{theorem}{Theorem}[section]
\newtheorem{proposition}[theorem]{Proposition}
\newtheorem{lemma}[theorem]{Lemma}
\newtheorem{corollary}[theorem]{Corollary}
\theoremstyle{definition}
\newtheorem{example}[theorem]{Example}
\theoremstyle{remark}
\newtheorem{remark}[theorem]{Remark}
\setlist[itemize]{label=$\circ$, topsep=2pt}
\setlist[enumerate]{topsep=2pt}
\numberwithin{equation}{section}
\begin{document}

\title{The structure of rough sets defined by reflexive 
relations}

\author{Jouni J\"{a}rvinen}
\address[J.~J{\"a}rvinen]{Software Engineering, LUT School of Engineering Science, Mukkulankatu~19, 15210 Lahti, Finland}
\email{jouni.jarvinen@lut.fi}

\author{S\'{a}ndor Radeleczki}
\address[S.~Radeleczki]{Institute of Mathematics, University of Miskolc, 3515 Miskolc-Egyetemv\'{a}ros, Hungary}
\email{sandor.radeleczki@uni-miskolc.hu}

\keywords{Rough set, 
reflexive relation,
lattice structure, 
Dedekind--MacNeille completion, 
paraorthomodular lattice, 
completely join-prime element, 
completely join-irreducible element, 
core element, 
Kleene algebra, Nelson algebra,
quasiorder}

\subjclass{Primary 68T37,  06B23; Secondary 06D30,  03E20}

\begin{abstract}
For several types of information relations, the induced rough sets system RS does not form a lattice but only a partially ordered set. 
However, by studying its Dedekind--MacNeille completion DM(RS), one may
reveal new important properties of rough set structures. 
Building upon D.~Umadevi's work on describing joins and meets in DM(RS), we previously investigated pseudo-Kleene algebras defined on DM(RS) for reflexive relations. 
This paper delves deeper into the order-theoretic properties of DM(RS) in the context of reflexive relations. 
We describe the completely join-irreducible elements of DM(RS) and characterize when DM(RS) is a spatial completely distributive lattice.
We show that even in the case of a non-transitive reflexive relation, DM(RS) can form a Nelson algebra, a property generally associated with quasiorders.
We introduce a novel concept, the core of a relational neighborhood, and use it to provide a necessary and sufficient condition for DM(RS) to determine a Nelson algebra.
\end{abstract}

\maketitle

\hfill
Dedicated to the memory of Magnus Steinby

\section{Introduction}

Rough set theory, initially introduced in \cite{Pawl82}, is grounded in the concept of indistinguishability
relations. These relations are equivalence relations defined such that two objects are considered equivalent when they appear identical based on their observable properties.

Let $E$ be an equivalence relation on $U$.
Every subset $X$ of $U$ can be approximated by two sets.
The upper approximation $X^\UP$ consists of elements that are 
related to at least one element in $X$ by the equivalence relation 
$E$. This set represents the elements that might belong to $X$ given the information provided by $E$.
The lower approximation $X^\DOWN$ consists of elements such that all elements related to them by $E$ are also in $X$. This set represents the elements that definitely belong to $X$ based on the information provided by  $E$.

The rough set of $X$ is the pair $(X^\DOWN, X^\UP)$. This pair
captures the uncertainty related to the subset $X$ due to the limited information provided by the equivalence relation $E$. 
The collection of all rough sets, denoted by 
$\mathrm{RS}$, can be ordered coordinate-wise using the set-inclusion relation. The lattice-theoretic properties of RS have been extensively studied. For example, J.~Pomyka{\l}a and 
J.~A.~Pomyka{\l}a \cite{PomPom88} proved that RS forms a Stone 
algebra. This result was further refined by S.~D.~Comer
\cite{Come93}, who established that $\mathrm{RS}$ forms a regular
double Stone algebra. Piero Pagliani \cite{Pagl96} introduced a Nelson algebra structure on $\mathrm{RS}$ and showed that these 
Nelson algebras induced by equivalence relations are semisimple.

Rough approximations and rough sets can be extended beyond
equivalence relations by using other types of binary relations. 
A quasiorder, a reflexive and transitive relation, is one such 
example. Rough sets defined by quasiorders have been investigated 
by several researchers (see \cite{Ganter07, Kortelainen1994,
KumBan2012, KumBan2015, Umadevi13, Qiao2012}, for instance). 
The rough set lattice  $\mathrm{RS}$ determined by a quasiorder 
is known to be completely distributive and spatial. Moreover, a Nelson algebra can be defined on $\mathrm{RS}$ (see \cite{JPR13,
JR11, JR14, JarvRadeRivi24, JarRadVer09}). A tolerance, a reflexive and symmetric relation, is another type of binary relation that can be used to define rough sets. While the rough set system $\mathrm{RS}$ defined 
by a tolerance may not always form a lattice \cite{Jarvinen01}, 
it does form a completely distributive and spatial lattice when 
the tolerance is induced by an irredundant covering. Furthermore, 
a pseudocomplemented regular Kleene algebra can be defined on 
$\mathrm{RS}$ in this case (see \cite{Jarvinen_Radeleczki2014, JR2018, jarvinen_radeleczki_2018}).

For arbitrary binary relations D.~Umadevi presented the 
Dedekind--MacNeille completion $\mathrm{DM(RS)}$ of $\mathrm{RS}$ 
in \cite{Umadevi2015}. She also described the meets and joins in 
$\mathrm{DM(RS)}$.
In \cite{JarRad23}, we investigated pseudo-Kleene algebras defined 
on $\mathrm{DM(RS)}$ for reflexive relations. We proved that these
algebras are always paraorthomodular. 
Also in a recent work \cite{umadevi2024}, Umadevi studies
$\mathrm{DM(RS)}$ in case of a reflexive relation.

The motivation for this research is to further explore the order-theoretic and algebraic properties of relation-based rough set structures. Since some older studies have a special role, we start by recalling them.

J. Varlet \cite{Varlet72} proved that a double pseudocomplemented distributive lattice is regular if and only if every chain of its prime filters has at most two elements. Varlet \cite{Varlet66} also showed that a distributive double pseudocomplemented lattice is a regular double Stone algebra if and only if its prime filters form disjoint chains of one or two elements. In \cite{Cign86}, 
R.~Cignoli proved that a quasi-Nelson algebra (see page~\pageref{def:quasi-Nelson} of this work for the definition) is a Nelson algebra if and only if its prime filters have the so-called interpolation property.

In our previous works, we proved that in a complete, completely distributive, and spatial lattice, analogous results can be presented using completely join-irreducible elements. The advantage of this is that dealing with join-irreducible elements is simpler than dealing with prime filters.

We proved in \cite{jarvinen_radeleczki_2018} that a completely distributive and spatial pseudocomplemented Kleene algebra is regular if and only if its set of completely join-irreducible elements has at most two levels. Because the complete lattices of rough sets defined by tolerances induced by irredundant coverings are completely distributive and spatial, we applied this result to prove that $\mathrm{RS}$ forms a regular pseudocomplemented Kleene algebra. Using relationships between the completely 
join-irreducible elements, we also provided a representation theorem stating that each pseudocomplemented Kleene algebra defined on a completely distributive and spatial lattice is isomorphic to some $\mathrm{RS}$ defined by a tolerance induced by an irredundant covering.

In \cite{JR11}, we showed that a completely distributive and spatial Kleene algebra forms a Nelson algebra if and only if its set of completely join-irreducible elements has the interpolation property. Using this result, we proved that rough sets determined by a quasiorder form a Nelson algebra. Additionally, we proved that every completely distributive and spatial Nelson algebra is isomorphic to some rough set defined by a quasiorder.

Note also that M. Gehrke and E. Walker proved in \cite{GeWa92} that for an equivalence $E$, $\mathrm{RS}$ is isomorphic to $\mathbf{2}^I \times \mathbf{3}^K$, where $\mathbf{2}$ and $\mathbf{3}$ are the chains of 2 and 3 elements, $I$ is the set of the $E$-classes having one element, and $K$ is the set of $E$-classes having more than one element. This means that the completely join-irreducible
elements of $\mathrm{RS}$ form disjoint chains of one or two elements.

The above discussion highlights the special interest in the notions of completely join-irreducible elements, spatiality, and complete distributivity. In this work, we characterize the set of the completely join-irreducible elements of $\mathrm{DM(RS)}$. Furthermore, we characterize the cases when $\mathrm{DM(RS)}$ is spatial and completely distributive.

Our paper is structured as follows. 
In Section~\ref{Sec:BasicDefinitions}, we introduce the basic
notation. We also prove $\mathrm{DM(RS)}$ is a
subdirect product of $\wp(U)^\DOWN \times \wp(U)^\UP$,
where  $\wp(U)^\DOWN$ and $\wp(U)^\UP$ denote the complete 
lattices of all lower and upper approximations of subsets of $U$,
respectively.
In Section~\ref{Sec:Irreducibles&Atoms}, we provide a characterization
of the completely join-irreducible elements of $\wp(U)^\Up$ and 
$\wp(U)^\UP$. Note that $\wp(U)^\Down$ and $\wp(U)^\Up$ 
represent the approximations determined by the inverse 
$\breve{R}$ of $R$. For reflexive relations,
we characterize the completely join-irreducible elements
$\mathcal{J}$
of $\mathrm{DM(RS)}$ in terms of the completely join-irreducible elements of $\wp(U)^\Up$ and $\wp(U)^\UP$, and so-called
singletons. Singletons are the elements $x$ such that 
$|R(x)| =  1$.
Additionally, we give a formula for the atoms of $\mathrm{DM(RS)}$.

Section~\ref{Sec:Irreducibles&Atoms} also includes a couple of examples. Example~\ref{exa:JoinRS} demonstrates that there are reflexive 
relations for which $\mathrm{DM(RS)}$ forms a regular double Stone 
algebra. Since this property is typically associated with rough sets
defined by equivalence relations, this observation motivates our 
studies in Section~\ref{Sec:NelsonAlgebras}, where we characterize
those reflexive relations which determine rough set Nelson
algebras. 
Example~\ref{exa:no_join_irreducibles} illustrates that there are 
reflexive relations on $U$ such that the set of completely 
join-irreducible elements of $\wp(U)^\Up$ is empty. Consequently, 
the set of completely join-irreducible elements of $\mathrm{DM(RS)}$ 
is also empty.

In Section~\ref{Sec:CompleteDistributivitity}, we prove that 
$\mathrm{DM(RS)}$ is completely distributive if 
and only if any of $\wp(U)^\UP$, $\wp(U)^\DOWN$, $\wp(U)^\Up$, or 
$\wp(U)^\Down$ is completely distributive. For spatiality, we present
a similar characterization, showing that $\mathrm{DM(RS)}$ is spatial 
if and only if $\wp(U)^\UP$ and $\wp(U)^\Up$ are spatial. We  
characterize the case when $\mathrm{DM(RS)}$ is completely distributive and spatial in terms of the completely 
join-prime elements of $\wp(U)^\Up$ and $\wp(U)^\UP$. 
In this section, we also introduce the notion of the core of $R(x)$ and $\breve{R}(x)$, and
show that the completely join-prime elements of 
$\wp(U)^\Up$ are those sets $R(x)$ whose core is nonempty. 
A similar relationship holds between $\wp(U)^\UP$ and the sets 
$\breve{R}(x)$. 

In Section~\ref{Sec:NelsonAlgebras}, we shift our focus and assume 
that $\mathrm{DM(RS)}$ is completely distributive and spatial. We examine the structure of $\mathrm{DM(RS)}$ under these assumptions. 
We use cores of $R(x)$ and $\breve{R}(x)$ to introduce the condition
in Theorem~\ref{thm:characterization} equivalent to the fact
that the set $\mathcal{J}$ satisfies the
interpolation property. By a result appearing in \cite{JR11}
this condition is equivalent to the case that $\mathrm{DM(RS)}$ forms
a Nelson algebra.

\section{Rough sets by reflexive relations}
\label{Sec:BasicDefinitions}

Originally rough approximation operations were defined
in terms of equivalence relations \cite{Pawl82}.
Several studies in the literature define rough set approximations using arbitrary binary relations, with one of the earliest definitions appearing in \cite{Yao96}.

Let $R$ be a binary relation on $U$. We denote for
any $x \in U$, $R(x) := \{y \in U \mid (x,y) \in R \}$. 
The symbol $:=$ denotes ``equals by definition''.
For any set $X \subseteq U$, the \emph{lower approximation} of $X$ is
\[ X^\DOWN := \{x \in U \mid R(x) \subseteq X \} \]
and the \emph{upper approximation} of $X$ is
\[ X^\UP := \{x \in U \mid R(x) \cap X \neq \emptyset\}.\]

We may also determine rough set approximations in terms of the inverse
$\breve{R}$ of $R$, that is,
\[ X^\Down := \{x \in U \mid \breve{R}(x) \subseteq X \} \]
and
\[ X^\Up := \{x \in U \mid \breve{R}(x) \cap X \neq \emptyset \}. \]
For all $X \subseteq U$,
\[ X^\UP = \bigcup_{x \in X} \breve{R}(x) \text{ \ and \ }
 X^\Up = \bigcup_{x \in X} R(x) .\]
In particular, $\{x\}^\Up = R(x)$ and $\{x\}^\UP = \breve{R}(x)$ for all
$x \in U$.

Let $\wp(U)$ denote the family of all subsets of $U$. We also write 
\begin{gather*}
\wp(U)^\DOWN := \{ X^\DOWN \mid X \subseteq U\}, \qquad \wp(U)^\UP := \{ X^\UP \mid X \subseteq U\}, \\
\wp(U)^\Down:= \{ X^\Down \mid X \subseteq U\}, \qquad \wp(U)^\Up := \{ X^\Up \mid X \subseteq U\}. 
\end{gather*}

Galois connections appear in diverse areas of mathematics and theoretical computer science. A Galois connection is a relationship between two ordered sets that connects them through a pair of order-preserving functions, indicating a strong structural similarity. 

Let $P$ and $Q$ be ordered sets. A pair
$(f,g)$ of maps $f \colon P \to Q$ and $g \colon Q \to P$
is a \emph{Galois connection} between $P$ and $Q$ if
$f(p) \leq q \iff p \leq g(q)$ for all $p \in P$ and $q \in Q$.
The essential properties and results on Galois connections
can be found in \cite{Davey02,Erne_primer}, for example.

The pairs of maps $({^\UP},{^\Down})$ and $({^\Up},{^\DOWN})$ are 
Galois connections on the complete lattice $(\wp(U),\subseteq)$ as noted in \cite{Jarvinen2007}, for instance. This means that if $R$ is an arbitrary binary relation on
$U$, the following facts hold for all $X,Y \subseteq U$ and
$\mathcal{H} \subseteq \wp(U)$:

\begin{enumerate}[label={\rm (GC\arabic*)}]
    \item $\emptyset^\UP = \emptyset^\Up = \emptyset$ and
    $U^\DOWN = U^\Down = U$.
    \item $X^{\Down\UP} \subseteq X \subseteq X^{\UP\Down}$ and
    $X^{\DOWN\Up} \subseteq X \subseteq X^{\Up\DOWN}$.
    \item $X \subseteq Y$ implies $X^\DOWN \subseteq Y^\DOWN$, 
    $X^\Down \subseteq Y^\Down$, $X^\UP \subseteq Y^\UP$, 
    $X^\Up \subseteq Y^\Up$.
    \item $(\bigcup \mathcal{H})^\UP = 
    \bigcup \{X^\UP \mid X \in \mathcal{H}\}$
    and 
    $(\bigcup \mathcal{H})^\Up = \bigcup\{X^\Up \mid X \in \mathcal{H}\}$.
    \item $(\bigcap \mathcal{H})^\DOWN = 
    \bigcap \{X^\DOWN \mid X \in \mathcal{H}\}$
    and 
    $(\bigcap \mathcal{H})^\Down = \bigcap\{X^\Down \mid X \in \mathcal{H}\}$.
    \item $X^{\UP \Down \UP} = X^\UP$,
    $X^{\Up \DOWN \Up} = X^\Up$,
    $X^{\DOWN \Up \DOWN} = X^\DOWN$,
    $X^{\Down \UP \Down} = X^\Down$.
    \item $(\wp(U)^\UP,\subseteq) \cong (\wp(U)^\Down,\subseteq)$
     and 
    $(\wp(U)^\Up,\subseteq) \cong (\wp(U)^\DOWN,\subseteq)$.
\end{enumerate}
The operations $^\UP$ and $^\DOWN$ are mutually dual, and the same holds for $^\Up$ and $^\Down$, that is, 
for any $X \subseteq U$,
\begin{equation} \label{eq:dual}
X^{c \UP} = X^{\DOWN c},
X^{c \DOWN} = X^{\UP c},
X^{c \Up} = X^{\Down c}, 
X^{c \Down} = X^{\Up c}.
\end{equation}
Note that $X^c$ denotes the complement $U \setminus X$ of $X$. Because of the 
duality, $(\wp(U)^\UP,\subseteq) \cong (\wp(U)^\DOWN,\supseteq)$
and $(\wp(U)^\Up,\subseteq) \cong (\wp(U)^\Down,\supseteq)$. Therefore, by combining with (GC7), 
we can write the isomorphisms: 
\begin{equation}\label{eq:isomorphisms}
(\wp(U)^\UP,\subseteq) \cong (\wp(U)^\DOWN,\supseteq) \cong
(\wp(U)^\Up,\supseteq) \cong (\wp(U)^\Down,\subseteq).
\end{equation}
If $R$ is a \emph{reflexive} relation
on $U$, that is, $(x,x) \in R$ for all $x \in U$, then
\begin{enumerate}[label={\rm (Ref\arabic*)}]
\item $\emptyset^\DOWN = \emptyset^\Down = \emptyset$ and  
$U^\UP = U^\Up = U$.
\item $X^\DOWN \subseteq X \subseteq X^\UP$ and
$X^\Down \subseteq X \subseteq X^\Up$ for any $X \subseteq U$.
\end{enumerate}
\medskip%
Because $({^\Up}, {^\DOWN})$ is a Galois connection, 
$\wp(U)^\DOWN$ is a complete lattice such that 
\[
\bigwedge_{i \in I} X_i = \bigcap_{i \in I} X_i
\text{\quad and \quad}
\bigvee_{i \in I} X_i = 
\big ( \bigcup_{i \in I} X_i \big)^{\Up\DOWN}
\]
for all $\{X_i\}_{i \in I} \subseteq \wp(U)^\DOWN$.
Similarly, $\wp(U)^\UP$ is a complete lattice such that 
\[
\bigwedge_{i \in I} Y_i = 
\big ( \bigcap_{i \in I} Y_i \big )^{\Down \UP}
\text{\quad and \quad}
\bigvee_{i \in I} Y_i = 
\bigcup_{i \in I} Y_i 
\]
for any $\{Y_i\}_{i \in I} \subseteq \wp(U)^\UP$. These
facts imply that the Cartesian product 
\[ 
\wp(U)^\DOWN \times \wp(U)^\UP
= \{ (X,Y) \mid X \in \wp(U)^\DOWN \text{ and } 
Y \in \wp(U)^\UP \} 
\]
is a complete lattice such that
\begin{equation} \label{Eq:Meet}
\bigwedge \{  ( X_i, Y_i ) \mid i \in I \}  =
\Big ( \bigcap_{i \in I} X_i, \big ( \bigcap_{i \in I} Y_i \big )^{\Down \UP} \Big )
\end{equation}
and
\begin{equation} \label{Eq:Join}
\bigvee \{  ( X_i, Y_i ) \mid i \in I \}  =
\Big ( \big ( \bigcup_{i \in I} X_i \big )^{\Up \DOWN} , 
\bigcup_{i \in I} Y_i \Big ).
\end{equation}

Originally Pawlak \cite[p. 351]{Pawl82} defined a rough set as 
an equivalence class of sets which look the same in view of the knowledge restricted by the given indistinguishability relation, that is, as a class of sets having the same lower approximation and the same upper approximation. This notion generalizes in a 
natural way to arbitrary binary relations.

Let $R$ be a binary relation on $U$. A relation $\equiv$ is 
defined on $\wp(U)$ by
\[ 
X \equiv Y \iff X^\DOWN = Y^\DOWN \text{\quad and \quad}
X^\UP = Y^\UP.
\]
The equivalence classes of $\equiv$ are called \emph{rough sets}. Each element in a given rough set looks the same, when observed through the knowledge given by the knowledge $R$.
Namely, if $X \equiv Y$, then exactly the same elements belong certainly or possibly to $X$ and $Y$.

The order-theoretical study of rough sets was initiated by T.~B.~Iwi{\'n}ski in \cite{Iwinski87}. In his approach, rough sets are defined as pairs $(X^\DOWN, X^\UP)$, where $X \subseteq U$. This definition aligns with Pawlak's original definition, because if $\mathcal{C} \subseteq \wp(U)$ is an equivalence class of $\equiv$, then $\mathcal{C}$ is uniquely determined by the pair $(X^\DOWN, X^\UP)$, where $X$ is any member of $\mathcal{C}$; a set $Y \subseteq U$ belongs to $\mathcal{C}$ if and only if $(Y^\DOWN, Y^\UP) = (X^\DOWN, X^\UP)$.

We denote by $\mathrm{RS}$ the set of all rough sets, that is,
\[ \mathrm{RS} := \{ (X^\DOWN, X^\UP) \mid X \subseteq U \}. \]
The set $\mathrm{RS}$ can be ordered coordinatewise by
\[ (X^\DOWN, X^\UP) \leq (Y^\DOWN, Y^\UP) \overset{\rm def}{\iff} 
X^\DOWN \subseteq Y^\DOWN \ \text{and}  \ X^\UP \subseteq Y^\UP. \]
It is known that $\mathrm{RS}$ is not always a lattice if $R$ is a 
reflexive and symmetric binary relation; see \cite{Jarvinen01}.

The \emph{Dedekind--MacNeille completion} of an ordered set is the smallest complete lattice containing it; see \cite{Davey02},
for example.
We denote the Dedekind--MacNeille completion of $\mathrm{RS}$ by $\mathrm{DM(RS)}$. 
D.~Umadevi \cite{Umadevi2015} has proved that for any binary relation $R$ on $U$,
\[ \mathrm{DM(RS)} = \{ (A,B) \in \wp(U)^\DOWN \times \wp(U)^\UP \mid A^{\Up \UP} \subseteq B \text{ and }  A \cap \mathcal{S} = B \cap \mathcal{S} \} .\]
Here 
\[ \mathcal{S} := 
\{ x \in U \mid R(x) = \{z\} \text{ for some $z \in U$} \}. \]
The elements of $\mathcal{S}$ are called \emph{singletons}.
This means that $x \in \mathcal{S}$ if and only if
$|R(x)| = 1$.
The following proposition, appearing in \cite{Umadevi2015}, describes how meets and joins are formed in the complete lattice $\mathrm{DM(RS)}$.

\begin{proposition} \label{prop:uma_operations}
The meets and joins are formed in $\mathrm{DM(RS)}$ as in
\eqref{Eq:Meet} and \eqref{Eq:Join}, respectively.
\end{proposition}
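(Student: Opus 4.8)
The plan is to recast the statement as a claim about complete sublattices. By Umadevi's description, $\mathrm{DM(RS)}$ is a subset of the complete lattice $L := \wp(U)^\DOWN \times \wp(U)^\UP$, equipped with the coordinatewise order, and the meets and joins of $L$ are precisely those displayed in \eqref{Eq:Meet} and \eqref{Eq:Join}. Since the lattice operations of a complete lattice are determined by its order, it suffices to prove that $\mathrm{DM(RS)}$ is closed under the arbitrary infima and suprema computed in $L$: once this is known, the standard fact that a subset of a complete lattice closed under the ambient meets and joins inherits them as its own operations shows that \eqref{Eq:Meet} and \eqref{Eq:Join} also compute the meets and joins inside $\mathrm{DM(RS)}$. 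Thus the whole proof reduces to a closure check. Throughout I fix a family $(X_i,Y_i)_{i \in I}$ in $\mathrm{DM(RS)}$, so that $X_i \in \wp(U)^\DOWN$, $Y_i \in \wp(U)^\UP$, $X_i^{\Up\UP} \subseteq Y_i$, and $X_i \cap \mathcal{S} = Y_i \cap \mathcal{S}$ for every $i$.

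For the meet I would put $A := \bigcap_i X_i$ and $B := (\bigcap_i Y_i)^{\Down\UP}$. Here $A \in \wp(U)^\DOWN$ by (GC5) and $B \in \wp(U)^\UP$ as an image of $^\UP$, so $(A,B) \in L$. To see $A^{\Up\UP} \subseteq B$, the inclusions $A \subseteq X_i$ give $A^{\Up\UP} \subseteq X_i^{\Up\UP} \subseteq Y_i$ for all $i$ by monotonicity, hence $A^{\Up\UP} \subseteq \bigcap_i Y_i$; since $A^{\Up\UP} \in \wp(U)^\UP$ is a fixed point of the interior operator $^{\Down\UP}$ by (GC6), applying $^{\Down\UP}$ yields $A^{\Up\UP} \subseteq (\bigcap_i Y_i)^{\Down\UP} = B$. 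For the join I would dually set $A' := (\bigcup_i X_i)^{\Up\DOWN}$ and $B' := \bigcup_i Y_i$, where $A' \in \wp(U)^\DOWN$ and $B' \in \wp(U)^\UP$ (the latter by (GC4)). Using (GC6) to collapse $A'^{\Up\UP} = (\bigcup_i X_i)^{\Up\DOWN\Up\UP} = (\bigcup_i X_i)^{\Up\UP}$, and then (GC4) to distribute $^\Up$ and $^\UP$ over the union, I obtain $A'^{\Up\UP} = \bigcup_i X_i^{\Up\UP} \subseteq \bigcup_i Y_i = B'$.

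The hard part will be the two singleton conditions $A \cap \mathcal{S} = B \cap \mathcal{S}$ and $A' \cap \mathcal{S} = B' \cap \mathcal{S}$, since these are the only places where the operators $^{\Down\UP}$ and $^{\Up\DOWN}$ must be understood on $\mathcal{S}$. The device I would use to unlock them is reflexivity: applying (Ref2) twice gives $A \subseteq A^\Up \subseteq A^{\Up\UP}$ and likewise $A' \subseteq A'^{\Up\UP}$. Combined with the containments $A^{\Up\UP} \subseteq B$ and $A'^{\Up\UP} \subseteq B'$ already established, this yields $A \subseteq B$ and $A' \subseteq B'$, which at once give the inclusions $A \cap \mathcal{S} \subseteq B \cap \mathcal{S}$ and $A' \cap \mathcal{S} \subseteq B' \cap \mathcal{S}$. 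For the reverse inclusions I would invoke the interior and closure bounds $B \subseteq \bigcap_i Y_i$ and $\bigcup_i X_i \subseteq A'$ from (GC2): if $x \in B \cap \mathcal{S}$, then $x \in Y_i \cap \mathcal{S} = X_i \cap \mathcal{S}$ for every $i$, so $x \in \bigcap_i X_i = A$; and if $x \in B' \cap \mathcal{S}$, then $x \in Y_j \cap \mathcal{S} = X_j \cap \mathcal{S}$ for some $j$, so $x \in X_j \subseteq \bigcup_i X_i \subseteq A'$.

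This closes both singleton conditions, shows that the ambient meet and join of the family again lie in $\mathrm{DM(RS)}$, and hence establishes the required closure; the reduction in the first paragraph then completes the proof. I expect the only genuine subtlety to be the singleton step, and the key observation that makes it routine is that reflexivity forces $A \subseteq A^{\Up\UP}$, so that one inclusion follows from $A^{\Up\UP} \subseteq B$ for free while the other is purely componentwise.
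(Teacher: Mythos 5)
A preliminary remark on the comparison itself: the paper offers no proof of Proposition~\ref{prop:uma_operations} at all --- it is quoted from Umadevi's work \cite{Umadevi2015} --- so your argument can only be judged against what the statement requires, not against an in-paper proof. Your overall strategy is the natural and correct one: realize $\mathrm{DM(RS)}$ inside the complete lattice $\wp(U)^\DOWN \times \wp(U)^\UP$, verify that it is closed under the ambient meets \eqref{Eq:Meet} and joins \eqref{Eq:Join}, and invoke the standard fact that a subset closed under arbitrary ambient meets and joins inherits them as its own operations. The componentwise membership checks, the verifications $A^{\Up\UP}\subseteq B$ and $A'^{\Up\UP}\subseteq B'$ via (GC4) and (GC6), and the halves of the singleton conditions that use $B\subseteq\bigcap_i Y_i$ and $\bigcup_i X_i\subseteq A'$ (that is, (GC2)) are all correct and require no assumption on $R$.

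The gap is your appeal to (Ref2). Proposition~\ref{prop:uma_operations} is stated, and later used, for an \emph{arbitrary} binary relation: Umadevi's description of $\mathrm{DM(RS)}$ is given ``for any binary relation $R$ on $U$'', and Proposition~\ref{Prop:Completion} and Lemma~\ref{lem:helper}, whose proofs rest on Proposition~\ref{prop:uma_operations}, carry no reflexivity hypothesis. Your key inclusion $A\subseteq A^{\Up\UP}$ is genuinely false without reflexivity (take $R=\emptyset$ and $A\neq\emptyset$; then $A^{\Up\UP}=\emptyset$), so as written your proof establishes the proposition only for reflexive $R$. Fortunately, both inclusions you obtained from reflexivity can instead be derived, for arbitrary $R$, from the defining condition $X_i^{\Up\UP}\subseteq Y_i$ together with the identities $\{x\}^\Up=R(x)$ and $\{x\}^\UP=\breve{R}(x)$. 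For the meet: if $x\in\bigl(\bigcap_i X_i\bigr)\cap\mathcal{S}$, say $R(x)=\{z\}$, then $\breve{R}(z)=\{x\}^{\Up\UP}\subseteq X_i^{\Up\UP}\subseteq Y_i$ for every $i$ by (GC3), hence $\breve{R}(z)\subseteq\bigcap_i Y_i$, which means $z\in\bigl(\bigcap_i Y_i\bigr)^\Down$; since $z\in R(x)$, this gives $x\in\bigl(\bigcap_i Y_i\bigr)^{\Down\UP}=B$. For the join: if $x\in\bigl(\bigcup_i X_i\bigr)^{\Up\DOWN}\cap\mathcal{S}$, say $R(x)=\{z\}$, then $\{z\}=R(x)\subseteq\bigl(\bigcup_i X_i\bigr)^{\Up}=\bigcup_i X_i^{\Up}$ by (GC4), so $z\in X_j^\Up$ for some $j$, whence $x\in X_j^{\Up\UP}\subseteq Y_j\subseteq B'$. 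With these two substitutions your argument goes through in the generality the paper actually needs.
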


Next we present two examples, which motivate the study of rough
sets defined by reflexive relations, which are the main topic
of this article.

\begin{example} In this work, we primarily consider rough sets defined by 
reflexive relations. Reflexive relations can be viewed as \emph{directional similarity relations}. 
Reflexivity is an evident property of similarity, as each object is inherently similar to itself.

Symmetry is not an obvious property of similarity. For instance, A.~Tversky states in \cite{Tversky77} that similarity should not be treated as a symmetric relation. Statements like ``$a$ is like $b$'' are directional, with $a$ as the subject and $b$ as the referent. It is not equivalent in general to the converse similarity statement ``$b$ is like $a$''. Tversky also provides concrete examples, like ``the portrait resembles the person'' rather than ``the person resembles the portrait'', and ``the son resembles the father'' rather than ``the father resembles the son''.

It is also clear that similarity relations are not necessarily transitive. For example, we may have a finite sequence of objects $x_1, x_2, \ldots, x_n$ such that each pair of consecutive objects $x_i$ and $x_{i+1}$ are similar, but the objects $x_1$ and $x_n$ have nothing in common.
\end{example}

In the following example, we discuss how rough approximations, rough sets, and their operations can be viewed when defined in terms of a directional similarity relation.

\begin{example} \label{Exa:set_operations}
Let $R$ be a directional similarity relation on $U$. If $X$ is a subset of $U$, then $X^\DOWN$ consists of elements $x$ such that all elements to which $x$ is $R$-similar are in $X$. The set $X^\DOWN$ can therefore be viewed as the set of elements that are certainly in $X$ given the knowledge $R$. Similarly, the set 
$X^\UP$ can be seen as the set of elements that are possibly in $X$. If $x$ is in $X^\UP$, then there is at least one element in 
$X$ to which $x$ is $R$-similar. The rough set $(X^\DOWN,X^\UP)$ 
of $X$ represents how the set $X$ appears when observed through the knowledge restricted by $R$

From usual sets, we can form new sets using the operations of union (elements that belong to at least one of the sets), intersection (elements that belong to all sets), and complement (elements that do not belong to the set). Since $\mathrm{DM(RS)}$ forms a lattice with a De Morgan complementation $\sim$, we can have lattice-theoretical counterparts of these set operations.

The join of two rough sets $(X^\DOWN, X^\UP)$ and $(Y^\DOWN, Y^\UP)$, corresponding to the union, can be computed in $\mathrm{DM(RS)}$ as:
\[
(X^\DOWN,X^\UP) \vee  (Y^\DOWN, Y^\UP) =
( (X^\DOWN \cup Y^\DOWN)^{\Up\DOWN}, (X^\UP \cup Y^\UP)) =
((X^{\DOWN \Up} \cup Y^{\DOWN \Up})^{\DOWN}, (X \cup Y)^\UP).
\]
Similarly, their intersection can be considered as the meet
in $\mathrm{DM(RS)}$:
\[
(X^\DOWN,X^\UP) \wedge  (Y^\DOWN, Y^\UP) =
( X^\DOWN \cap Y^\DOWN, (X^\UP \cap Y^\UP)^{\Down\UP} ) =
((X \cap Y)^\DOWN, (X^{\UP\Down} \cup Y^{\UP\Down})^\UP).
\]
The complement may be defined as:
\[
{\sim}(X^\DOWN,X^\UP) = ( X^{\UP c}, X^{\DOWN c}) =
(X^{c \DOWN}, X^{c \UP}).
\]
\end{example}

A \emph{subdirect product} $L$ of an indexed family of
complete lattices $\{L_i\}_{i \in I}$ is a complete sublattice of the direct product 
$\prod_{i \in I} L_i$ such that the canonical projections 
$\pi_i$ are all surjective, that is, $\pi_i(L) = L_i$
for all $i \in I$ \cite{Balb74,Birkhoff79}.

\begin{proposition} \label{Prop:Completion}
If $R$ is a binary relation on $U$, then $\mathrm{DM(RS)}$ is a subdirect product of  $\wp(U)^\DOWN$ and $\wp(U)^\UP$.
\end{proposition}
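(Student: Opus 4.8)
The plan is to verify the two defining requirements of a subdirect product directly: that $\mathrm{DM(RS)}$ is a complete sublattice of $\wp(U)^\DOWN \times \wp(U)^\UP$, and that both canonical projections $\pi_1$ and $\pi_2$ are surjective.

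First I would note that by Umadevi's characterization every element $(A,B)$ of $\mathrm{DM(RS)}$ satisfies $A \in \wp(U)^\DOWN$ and $B \in \wp(U)^\UP$, so $\mathrm{DM(RS)}$ is a subset of the product, and it is a complete lattice simply because it is a Dedekind--MacNeille completion. The key step is to show that it is closed under the meets and joins of the product. Here I would observe that equations \eqref{Eq:Meet} and \eqref{Eq:Join} are precisely the meet and join formulas of the product lattice $\wp(U)^\DOWN \times \wp(U)^\UP$, while Proposition~\ref{prop:uma_operations} asserts that the meets and joins of $\mathrm{DM(RS)}$ are computed by these same formulas. Consequently, for any family of elements of $\mathrm{DM(RS)}$ the meet and join formed in the product coincide with those formed inside $\mathrm{DM(RS)}$, and in particular belong to $\mathrm{DM(RS)}$. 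This is exactly the complete-sublattice property.

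For surjectivity of the projections I would exploit the inclusion $\mathrm{RS} \subseteq \mathrm{DM(RS)}$. Given any $A \in \wp(U)^\DOWN$, write $A = X^\DOWN$ for some $X \subseteq U$; then the rough set $(X^\DOWN, X^\UP)$ lies in $\mathrm{RS}$, hence in $\mathrm{DM(RS)}$, and $\pi_1(X^\DOWN, X^\UP) = X^\DOWN = A$, so $\pi_1$ maps onto $\wp(U)^\DOWN$. Running the same argument with an element $B = X^\UP$ of $\wp(U)^\UP$ gives $\pi_2(X^\DOWN, X^\UP) = X^\UP = B$, so $\pi_2$ maps onto $\wp(U)^\UP$. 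Together with the sublattice property, this yields the claim by the definition of a subdirect product.

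There is no genuine difficulty in this argument; the only point requiring care is the identification of the operations, namely that the formulas of Proposition~\ref{prop:uma_operations} match the coordinatewise operations of the product and that the match holds for arbitrary, not merely finite, index sets, so that $\mathrm{DM(RS)}$ is closed under complete meets and joins rather than binary ones only. Once this identification is in place, both the complete-sublattice property and the surjectivity of the projections follow immediately.
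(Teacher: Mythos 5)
Your proposal is correct and follows essentially the same route as the paper's proof: the complete-sublattice property is obtained from Proposition~\ref{prop:uma_operations} (the meet and join formulas of $\mathrm{DM(RS)}$ agree with the coordinatewise formulas \eqref{Eq:Meet} and \eqref{Eq:Join} of the product), and surjectivity of both projections is deduced from the inclusion $\mathrm{RS} \subseteq \mathrm{DM(RS)}$ by writing an arbitrary element of $\wp(U)^\DOWN$ or $\wp(U)^\UP$ as an approximation of some $X \subseteq U$. Your extra remark that the identification must hold for arbitrary index sets, not just binary ones, is a correct and slightly more explicit rendering of what the paper leaves implicit.
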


\begin{proof}
By Proposition~\ref{prop:uma_operations}, 
$\mathrm{DM(RS)}$ is a complete sublattice of 
$\wp(U)^\DOWN \times \wp(U)^\UP$, because joins and meets are formed analogously.

The maps $\pi_{1} \colon (X^\DOWN, Y^\UP) \mapsto X^\DOWN$ and
$\pi_{2} \colon (X^\DOWN, Y^\UP) \mapsto Y^\UP$ are the canonical projections 
of the product  $\wp(U)^\DOWN \times \wp(U)^\UP$. Obviously,
their restrictions to $\mathrm{DM(RS)}$ are surjective. This can be
seen, for instance, because $\mathrm{RS} \subseteq \mathrm{DM(RS)}$.
If $X \in \wp(U)^\DOWN$, then there is $Y \subseteq U$ such that
$X = Y^\DOWN$. Now $\pi_1 (Y^\DOWN,Y^\UP) = Y^\DOWN = X$. The proof concerning
$\pi_2$ is analogous.
\end{proof}

We end this section by presenting the following two useful
lemmas.

\begin{lemma} \label{lem:helper}
Let $R$ be a binary relation on $U$.
If $(A_i,B_i)_{i \in I}$ and $(C_j,D_j)_{j \in J}$ are subsets
of $\mathrm{DM(RS)}$, then
\[
{\bigvee}_{i \in I} (A_i,B_i) \vee {\bigvee}_{j \in J} (C_j,D_j) =
\Big ( 
\big ( {\bigcup}_{i \in I} A_i \cup {\bigcup}_{j \in J} C_j 
\big )^{\Up \DOWN}, 
\bigcup_{i \in I} B_i \cup \bigcup_{j \in J} D_j 
\Big ).
\]
\end{lemma}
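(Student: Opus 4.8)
The plan is to derive this identity directly from the general join formula \eqref{Eq:Join} (which is valid in $\mathrm{DM(RS)}$ by Proposition~\ref{prop:uma_operations}) together with the elementary fact that in any complete lattice the join of two subsets equals the join of their union. Concretely, I would write $S := \{(A_i,B_i) \mid i \in I\}$ and $T := \{(C_j,D_j) \mid j \in J\}$, both of which are subsets of $\mathrm{DM(RS)}$, so that the left-hand side is $\bigvee S \vee \bigvee T$, and first observe that $\bigvee S \vee \bigvee T = \bigvee (S \cup T)$.

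Next I would apply \eqref{Eq:Join} to the family $S \cup T$, indexed by the disjoint union $I \sqcup J$. The first coordinates occurring in $S \cup T$ are $\{A_i \mid i \in I\} \cup \{C_j \mid j \in J\}$, whose union is $\bigcup_{i\in I} A_i \cup \bigcup_{j\in J} C_j$; likewise the union of all second coordinates is $\bigcup_{i\in I} B_i \cup \bigcup_{j\in J} D_j$. Substituting these into \eqref{Eq:Join} produces exactly the pair on the right-hand side, and the identity follows.

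As an alternative, more explicit route, one could instead compute $\bigvee S$ and $\bigvee T$ separately via \eqref{Eq:Join} and then take their binary join (the two-element instance of \eqref{Eq:Join}). The second coordinate is then immediately $\bigcup_{i\in I} B_i \cup \bigcup_{j\in J} D_j$, whereas the first coordinate becomes $\left( (\bigcup_{i\in I} A_i)^{\Up\DOWN} \cup (\bigcup_{j\in J} C_j)^{\Up\DOWN} \right)^{\Up\DOWN}$. To reconcile this with the claimed form I would use that ${}^{\Up\DOWN}$ is a closure operator on $(\wp(U),\subseteq)$, being the composite of the Galois connection $({}^\Up,{}^\DOWN)$; for any closure operator the identity $(P^{\Up\DOWN}\cup Q^{\Up\DOWN})^{\Up\DOWN} = (P\cup Q)^{\Up\DOWN}$ holds for all $P,Q \subseteq U$, which is a one-line consequence of monotonicity and idempotency.

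This lemma is essentially bookkeeping, so I do not expect a genuine obstacle. The only points requiring a little care are the reindexing in the first route (choosing the two index sets disjoint so that the two unions do not collapse), or, in the second route, the verification of the closure-operator identity for ${}^{\Up\DOWN}$; both are entirely routine.
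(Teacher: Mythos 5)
Your proposal is correct, and it in fact contains two valid arguments. Your ``alternative, more explicit route'' is essentially the paper's own proof: the paper computes ${\bigvee}_{i}(A_i,B_i)$ and ${\bigvee}_{j}(C_j,D_j)$ separately via \eqref{Eq:Join}, takes their binary join, and then collapses the resulting first coordinate
$\big(\big({\bigcup}_{i} A_i\big)^{\Up\DOWN}\cup\big({\bigcup}_{j} C_j\big)^{\Up\DOWN}\big)^{\Up\DOWN}$
to
$\big({\bigcup}_{i} A_i\cup{\bigcup}_{j} C_j\big)^{\Up\DOWN}$,
not by quoting an abstract closure-operator identity but by an inline computation using (GC4) (${}^\Up$ distributes over unions) and (GC6) ($X^{\Up\DOWN\Up}=X^\Up$); this is precisely the identity $(P^{\Up\DOWN}\cup Q^{\Up\DOWN})^{\Up\DOWN}=(P\cup Q)^{\Up\DOWN}$ that you invoke, and your closure-operator justification of it is sound, since ${}^{\Up\DOWN}$ is extensive, monotone, and idempotent by (GC2), (GC3), and (GC6). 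Your primary route is genuinely different and somewhat slicker: it replaces the coordinate manipulation by the general complete-lattice fact $\bigvee S\vee\bigvee T=\bigvee(S\cup T)$, followed by a single application of \eqref{Eq:Join} to the family indexed by the disjoint union $I\sqcup J$. What the paper's route (and your second one) buys is that every step stays inside the concrete coordinate formulas and is checkable against the listed (GC) properties; what your first route buys is brevity, at the modest cost of making explicit that joins over a set and over an indexing family agree and that general associativity of joins holds in any complete lattice.
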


\begin{proof} In this proof, equalities (i) and (ii) are based on how the joins are formed and follow from \eqref{Eq:Join}. Because 
$^\Up$ distributes over joins, we have (iii) in view of (GC4). (iv) is a plain application of (GC6). Equality (v) is again 
based on (GC4).

\begin{gather*}
{\bigvee}_{i \in I} (A_i,B_i) \vee {\bigvee}_{j \in J} (C_j,D_j) 
\stackrel{\rm (i)}{=} 
\Big (  \big ( {\bigcup}_{i \in I} A_i \big )^{\Up\DOWN}, \
{\bigcup}_{i \in I} B_i \Big) \vee 
\Big (\big ( {\bigcup}_{j \in J} C_j \big )^{\Up\DOWN}, \
{\bigcup}_{j \in J} D_j \Big ) \\
\stackrel{\rm (ii)}{=} 
\Big ( \big (  \big ( {\bigcup}_{i \in I} A_i \big )^{\Up\DOWN} \cup
\big ( {\bigcup}_{j \in J} C_j \big )^{\Up\DOWN} \big )^{\Up \DOWN}, \
{\bigcup}_{i \in I} B_i \cup {\bigcup}_{j \in J} D_j \Big ) \\
\stackrel{\rm (iii)}{=} 
\Big ( \big (  \big ( {\bigcup}_{i \in I} A_i \big )^{\Up\DOWN \Up} \cup
\big ( {\bigcup}_{j \in J} C_j \big )^{\Up\DOWN \Up} \big )^{\DOWN}, \
{\bigcup}_{i \in I} B_i \cup {\bigcup}_{j \in J} D_j \Big ) \\
\stackrel{\rm (iv)}{=} 
\Big ( \big (  \big ( {\bigcup}_{i \in I} A_i \big )^{\Up} \cup
\big ( {\bigcup}_{j \in J} C_j \big )^{\Up} \big )^{\DOWN}, \
{\bigcup}_{i \in I} B_i \cup {\bigcup}_{j \in J} D_j \Big ) \\
\stackrel{\rm (v)}{=} 
\Big ( \big (  {\bigcup}_{i \in I} A_i  \cup
{\bigcup}_{j \in J} C_j \big )^{\Up \DOWN}, \
{\bigcup}_{i \in I} B_i \cup {\bigcup}_{j \in J} D_j \Big ) .
\qedhere
\end{gather*}
\end{proof}

\begin{lemma}\label{lem:rule}
Let $R$ be a reflexive relation. If $(A,B) \in \mathrm{DM(RS)}$, then
\begin{equation} \label{eq:alt_form}
 (A,B) = \bigvee \{ (\{a\}^{\Up \DOWN}, \{a\}^{\Up \UP} ) \mid a \in A \}
\vee \bigvee \{ (\{b\}^\DOWN,\{b\}^\UP) \mid b \in B^\Down \}.
\end{equation}
\end{lemma}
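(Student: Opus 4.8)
The plan is to apply Lemma~\ref{lem:helper} directly to the right-hand side, which first requires checking that both families of generators lie in $\mathrm{DM(RS)}$. In fact both lie already in $\mathrm{RS}$: the pair $(\{b\}^\DOWN,\{b\}^\UP)$ is the rough set of the singleton $\{b\}$, and since $\{a\}^\Up = R(a)$, the pair $(\{a\}^{\Up\DOWN},\{a\}^{\Up\UP})$ is the rough set of the set $R(a)=\{a\}^\Up$. Applying Lemma~\ref{lem:helper} with the index sets $I=A$ and $J=B^\Down$ then collapses the double join into the single pair
\[
\Big( \big( \bigcup_{a\in A}\{a\}^{\Up\DOWN} \cup \bigcup_{b\in B^\Down}\{b\}^\DOWN \big)^{\Up\DOWN},\ \bigcup_{a\in A}\{a\}^{\Up\UP} \cup \bigcup_{b\in B^\Down}\{b\}^\UP \Big),
\]
so it remains to verify that the first coordinate equals $A$ and the second equals $B$.

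For the second coordinate I would first rewrite $\bigcup_{b\in B^\Down}\{b\}^\UP = \bigcup_{b\in B^\Down}\breve{R}(b) = (B^\Down)^\UP = B^{\Down\UP}$, which equals $B$ because $B\in\wp(U)^\UP$ and $X^{\UP\Down\UP}=X^\UP$ by (GC6). For the other summand, since $^\UP$ distributes over unions (GC4) and $A^\Up=\bigcup_{a\in A}R(a)$, I obtain $\bigcup_{a\in A}\{a\}^{\Up\UP}=A^{\Up\UP}$, and the defining condition $A^{\Up\UP}\subseteq B$ of $\mathrm{DM(RS)}$ shows this summand is absorbed. Hence the second coordinate is $A^{\Up\UP}\cup B=B$.

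For the first coordinate I would exploit that $^{\Up\DOWN}$ is the closure operator of the Galois connection $({^\Up},{^\DOWN})$ and that $A$ is closed, i.e.\ $A^{\Up\DOWN}=A$ (as $A\in\wp(U)^\DOWN$, again by (GC6)). It then suffices to prove that the inner union $W:=\bigcup_{a\in A}\{a\}^{\Up\DOWN}\cup\bigcup_{b\in B^\Down}\{b\}^\DOWN$ equals $A$, for then $W^{\Up\DOWN}=A^{\Up\DOWN}=A$. The inclusion $A\subseteq W$ is immediate from reflexivity, since $R(a)\subseteq R(a)$ gives $a\in (R(a))^\DOWN=\{a\}^{\Up\DOWN}$. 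For the first half of $W\subseteq A$, writing $A=X^\DOWN$, each $a\in A$ satisfies $R(a)\subseteq X$, so $\{a\}^{\Up\DOWN}=(R(a))^\DOWN\subseteq X^\DOWN=A$ by monotonicity (GC3).

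The step I expect to be the crux is controlling the second family of generators in the first coordinate, namely $\bigcup_{b\in B^\Down}\{b\}^\DOWN\subseteq A$. Here I would use that for a reflexive $R$ one has $x\in R(x)$, so $\{b\}^\DOWN=\{x\mid R(x)\subseteq\{b\}\}$ is either $\{b\}$ (exactly when $R(b)=\{b\}$, i.e.\ $b\in\mathcal{S}$) or empty. Thus only singleton indices $b$ contribute, and for such $b$ I must show $b\in A$. This is where the two remaining $\mathrm{DM(RS)}$ ingredients enter: $b\in B^\Down\subseteq B$ by (Ref2), and then $b\in B\cap\mathcal{S}=A\cap\mathcal{S}$ by the compatibility condition, giving $b\in A$. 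Combining the two inclusions yields $W=A$ and closes the argument; everything else is routine Galois-connection bookkeeping.
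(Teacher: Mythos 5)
Your proposal is correct and follows essentially the same route as the paper's proof: apply Lemma~\ref{lem:helper} to the two generating families (both lying in $\mathrm{RS} \subseteq \mathrm{DM(RS)}$), identify the second coordinate with $B$ via $A^{\Up\UP}\subseteq B$ and $B^{\Down\UP}=B$, and handle the first coordinate by observing that $\{b\}^\DOWN$ is empty unless $b\in\mathcal{S}$, in which case the condition $A\cap\mathcal{S}=B\cap\mathcal{S}$ forces $b\in A$. The only differences are cosmetic — you absorb $\bigcup_{b\in B^\Down}\{b\}^\DOWN$ directly into $A$ rather than into the family $\bigcup_{a\in A}\{a\}^{\Up\DOWN}$ as the paper does, and you phrase some elementwise steps as Galois-connection identities — but the decomposition, the key lemma, and the crux of the argument coincide.
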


\begin{proof}
Using Lemma~\ref{lem:helper}, the right side of Equation \eqref{eq:alt_form} can be written 
in the form:
\[
\Big ( \big ( \bigcup \{ \{a\}^{\Up\DOWN} \mid a \in A\} \cup
\bigcup \{ \{b\}^\DOWN \mid b \in B^\Down\} \big)^{\Up\DOWN}, \
 \bigcup \{ \{a\}^{\Up\UP} \mid a \in A\} \cup
 \bigcup \{ \{b\}^\UP \mid b \in B^\Down \}
\Big ). 
\]
Let us denote this pair by $(\mathsf{L},\mathsf{R})$. 
We prove that $(A,B) = (\mathsf{L},\mathsf{R})$.

First, we show that $B = \mathsf{R}$. 
If $a \in A$, then $\{a\} \subseteq A$ and 
$\{a\}^{\Up \UP} \subseteq A^{\Up \UP} \subseteq B$. 
If $b \in B^\Down$,
then $\{b\}^\UP \subseteq B^{\Down \UP} = B$. This means that 
$\mathsf{R} \subseteq B$.

Suppose that 
$c \in B$. Because $B = B^{\Down\UP} = 
\bigcup \{ \{b\}^\UP \mid b \in B^\Down\}$,
this means that $c \in \{b\}^\UP$ for some $b \in B^\Down$. Thus,
$c \in R$. We have proved that $\mathsf{R} = B$.

To prove $A = \mathsf{L}$, we show first that
\[ \bigcup \{ \{b\}^\DOWN \mid b \in B^\Down\} \subseteq
\bigcup \{ \{a\}^{\Up\DOWN} \mid a \in A\}.
\]
There are two possibilities: $\{b\}^\DOWN = \emptyset$ and
$\{b\}^\DOWN \neq \emptyset$. If $\{b\}^\DOWN = \emptyset$ for some
$b \in B^\Down$, then $\bigcup \{ \{b\}^\DOWN \mid b \in B^\Down\}$ 
does not change because of this $b$. If $\{b\}^\DOWN \neq \emptyset$,
then this means that $\{b\}^\DOWN = \{b\}$ and $R(b) = \{b\}$. We
have that $b \in \mathcal{S}$. Hence, 
$b \in B \cap \mathcal{S} = A \cap \mathcal{S} \subseteq A$.
Because $\{b\} = R(b) = \{b\}^\Up$, we have that
$\{b\}^\DOWN = \{b\}^{\Up\DOWN}$ belongs to 
$\{ \{a\}^{\Up\DOWN} \mid a \in A\}$ and we have again that
$\bigcup \{ \{b\}^\DOWN \mid b \in B^\Down\} \subseteq
\bigcup \{ \{a\}^{\Up\DOWN} \mid a \in A\}$. Therefore,

\[ \bigcup \{ \{a\}^{\Up\DOWN} \mid a \in A\} \cup
\bigcup \{ \{b\}^\DOWN \mid b \in B^\Down\} = 
\bigcup \{ \{a\}^{\Up\DOWN} \mid a \in A\} . \]

Let $b \in \bigcup \{ \{a\}^{\Up\DOWN} \mid a \in A\}$. Then
$b \in \{a\}^{\Up\DOWN}$ for some $a \in A$ and
$b \in \{a\}^{\Up\DOWN} \subseteq A^{\Up\DOWN} = A$. Thus, 
$\bigcup \{ \{a\}^{\Up\DOWN} \mid a \in A\} \subseteq A$ which gives 
$\big ( \bigcup \{ \{a\}^{\Up\DOWN} \mid a \in A\} \big )^{\Up\DOWN} \subseteq 
A^{\Up \DOWN} = A$. 
This means that $\mathsf{L} \subseteq A$.

On the other hand, if $a \in A$, then $a \in \{a\} \subseteq
\{a\}^{\Up \DOWN}$. Thus,
\[ a \in\bigcup \{ \{a\}^{\Up\DOWN} \mid a \in A\} 
\subseteq (\bigcup \{ \{a\}^{\Up\DOWN} \mid a \in A\})^{\Up \DOWN}.\]
We have proved that $A \subseteq \mathsf{L}$ and also 
$\mathsf{L} = A$. This means
that equation \eqref{eq:alt_form} holds.
\end{proof}

\section{Completely join-irreducible elements and atoms}
\label{Sec:Irreducibles&Atoms}

Completely join-irreducible elements help in understanding the structure of a lattice. 
They are the building blocks in the sense that they cannot be broken down further within the lattice.
An element $j$ of a complete lattice $L$ is called 
\emph{completely join-irreducible} if $j = \bigvee S$
implies $j \in S$ for every subset $S$ of $L$ 
\cite[p.~242]{Davey02}. Note that the least element $0$ of $L$ is not completely
join-irreducible. The set of completely join-irreducible elements of $L$ is denoted by 
$\mathcal{J}(L)$, or simply by $\mathcal{J}$ if there is no danger of confusion. Note that if $L$ is finite, then $\mathcal{J}(L)$ consists of those elements which
cover precisely one element.

\begin{remark} \label{rem:join-irreducible_form}
For all $X\subseteq U$, $X^\Up = \bigcup \{ \{x\}^\Up \mid x \in X\}$.
Therefore, each completely join-irreducible element of $\wp(U)^\Up$ must be of the form $\{x\}^\Up = R(x)$ for some $x \in U$.
Similarly, every completely join-irreducible element of $\wp(U)^\UP$ must 
be of the form $\{x\}^\UP = \breve{R}(x)$, where $x \in U$.
Hence, the completely join-irreducible elements of $\wp(U)^\Up$ are elements $R(x)$ such that for all $\mathcal{H} \subseteq \wp(U)$, $R(x) = \bigcup \{ Y^\Up \mid Y \in \mathcal{H} \}$ implies that $R(x) = Y^\Up$ for some $Y \in \mathcal{H}$.
\end{remark}

Our following lemma characterizes the completely join-irreducible
elements of $\wp(U)^\Up$ in terms of $R$-neighbourhoods.

\begin{lemma} \label{lem:irreducible}
Let $R$ be a binary relation on $U$. The following are equivalent for $X \subseteq U$:
\begin{enumerate}[label={\rm (\roman*)}]
    \item $X^\Up$ is completely join-irreducible in 
    $\wp(U)^\Up$;
    \item $X^\Up = R(x)$ for some $x \in U$ such that
    for any $A \subseteq U$,
    $R(x) = \bigcup \{ R(a) \mid a \in A\}$ implies 
    $R(x) = R(b)$ for some  $b \in A$. 
\end{enumerate}
\end{lemma}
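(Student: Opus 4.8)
The plan is to prove the equivalence by unwinding the definition of complete join-irreducibility and using the fact, recorded in Remark~\ref{rem:join-irreducible_form}, that every completely join-irreducible element of $\wp(U)^\Up$ must already have the form $\{x\}^\Up = R(x)$ for some $x \in U$. This last observation is the key reduction: it tells us that in establishing (i)~$\Rightarrow$~(ii) we need not search among arbitrary sets $X^\Up$, but may immediately write $X^\Up = R(x)$. The remaining task is then to translate the abstract irreducibility condition — that $R(x) = \bigvee \mathcal{Y}$ forces $R(x) \in \mathcal{Y}$ — into the concrete neighbourhood statement in (ii).

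For (i)~$\Rightarrow$~(ii), I would argue as follows. Assume $X^\Up$ is completely join-irreducible; then $X^\Up = R(x)$ for some $x$ by Remark~\ref{rem:join-irreducible_form}. Now suppose $R(x) = \bigcup\{R(a) \mid a \in A\}$ for some $A \subseteq U$. Since joins in $\wp(U)^\Up$ are just unions (by the formula recalled before Proposition~\ref{prop:uma_operations}) and each $R(a) = \{a\}^\Up \in \wp(U)^\Up$, this union is exactly the join $\bigvee\{\{a\}^\Up \mid a \in A\}$ in $\wp(U)^\Up$. Complete join-irreducibility then yields $R(x) = \{a\}^\Up = R(a)$ for some $a \in A$, which is precisely condition~(ii) with $b = a$.

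For the converse (ii)~$\Rightarrow$~(i), suppose $X^\Up = R(x)$ with the stated neighbourhood property, and let $R(x) = \bigcup\{Y^\Up \mid Y \in \mathcal{H}\}$ for some family $\mathcal{H} \subseteq \wp(U)$. Using (GC4), each $Y^\Up = \bigcup_{y \in Y}\{y\}^\Up = \bigcup_{y \in Y} R(y)$, so the whole right-hand side collapses to $\bigcup\{R(a) \mid a \in A\}$, where $A := \bigcup \mathcal{H}$. The hypothesis then gives $R(x) = R(b)$ for some $b \in A$; choosing the member $Y \in \mathcal{H}$ with $b \in Y$, I would check that $R(x) = R(b) \subseteq Y^\Up \subseteq R(x)$, forcing $R(x) = Y^\Up$. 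By the reformulation in Remark~\ref{rem:join-irreducible_form} this is exactly complete join-irreducibility of $R(x) = X^\Up$.

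The main obstacle, such as it is, lies in the bookkeeping of the converse: one must pass cleanly from a join over an arbitrary family $\mathcal{H}$ of subsets to a union over the single index set $A = \bigcup \mathcal{H}$ of points, and then recover from the resulting point $b$ an actual member of $\mathcal{H}$ witnessing the equality $R(x) = Y^\Up$. This is where (GC4) — the fact that $^\Up$ distributes over unions — does all the work, so the real content is ensuring that irreducibility tested against point-neighbourhoods $R(a)$ is equivalent to irreducibility tested against arbitrary elements $Y^\Up$ of $\wp(U)^\Up$. Everything else is a direct transcription of the definition, and no appeal to reflexivity is needed, consistent with the lemma being stated for an arbitrary binary relation $R$.
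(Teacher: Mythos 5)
Your proposal is correct and follows essentially the same route as the paper's proof: both directions reduce to the form $R(x)$ via Remark~\ref{rem:join-irreducible_form}, and the converse uses (GC4) to collapse $\bigcup\{Y^\Up \mid Y \in \mathcal{H}\}$ into $\bigcup\{R(a) \mid a \in \bigcup\mathcal{H}\}$, then recovers a witness $Y \in \mathcal{H}$ through the sandwich $R(x) = R(b) \subseteq Y^\Up \subseteq R(x)$, exactly as in the paper. The only cosmetic difference is that in (i)$\Rightarrow$(ii) you invoke complete join-irreducibility directly in the lattice $\wp(U)^\Up$ (where joins are unions), while the paper invokes the equivalent reformulation already recorded in the Remark.
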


\begin{proof} (i)$\Rightarrow$(ii):
Suppose that $X^\Up$ is completely join-irreducible in
$\wp(U)^\Up$. By Remark~\ref{rem:join-irreducible_form},
$X^\Up = R(x)$ for some $x \in U$ such that
for all $\mathcal{H} \subseteq \wp(U)$, 
$R(x) = \bigcup \{ Y^\Up \mid Y \in \mathcal{H} \}$ implies that 
$R(x) = Y^\Up$ for some $Y \in \mathcal{H}$.
Let $R(x) = \bigcup \{R(a) \mid a \in A\}$ for some
$A \subseteq U$. Because each $R(a) = \{a\}^\Up$ belongs to
$\wp(U)^\Up$, we have $R(x) = R(b)$ for some $b \in A$. 

\medskip\noindent%
(ii)$\Rightarrow$(i): Let $X^\Up = R(x)$ be such that (ii) holds.
Assume that $R(x) = \bigcup \{ Y^\Up \mid Y \in \mathcal{H} \}$
for some $\mathcal{H} \subseteq \wp(U)$. Since
$\bigcup \mathcal{H} = \bigcup \{Y \mid Y \in \mathcal{H}\}$,
we have 
$\big (\bigcup \mathcal{H} )^\Up = 
\big ( \bigcup \{Y \mid Y \in \mathcal{H}\} \big )^\Up =
\bigcup \{Y^\Up \mid Y \in \mathcal{H}\}$. Hence, our assumption
is equivalent to $R(x) = \big ( \bigcup \mathcal{H} \big) ^\Up$.

It is easy to see that any set $X \subseteq U$ satisfies
$X^\Up =  \bigcup \{ \{a\}^\Up \mid a \in X\}$. By this,
\[ R(x) = \bigcup \{ \{a\}^\Up \mid a \in \bigcup \mathcal{H}\}
= \bigcup \{ R(a) \mid a \in \bigcup \mathcal{H}\}. \]
Because (ii) holds, we have that $R(x) = R(a)$ for some $a \in \bigcup \mathcal{H}$. 
Hence, there is $Y \in \mathcal{H}$ such that $a \in Y$. We get
\[ R(x) = R(a) = \{a\}^\Up \subseteq Y^\Up \subseteq 
\big ({\bigcup} \mathcal{H} \big ) ^\Up = R(x). \]
This means that $R(x) = Y^\Up$ and therefore $X^\Up = R(x)$ is
completely join-irreducible in $\wp(U)^\Up$. 
\end{proof}

The following corollary is obviously true, because $\breve{R}$ is reflexive whenever $R$ is. Therefore, this corollary follows directly from Lemma~\ref{lem:irreducible}.

\begin{corollary} \label{cor:irreducible}
Let $R$ be a binary relation on $U$. 
The following are equivalent for any $x \in U$:
\begin{enumerate}[label={\rm (\roman*)}]
    \item $X^\UP$ is completely join-irreducible in 
    $\wp(U)^\UP$;
    \item $X^\UP = \breve{R}(x)$ for some $x \in U$ such that
    for any $A \subseteq U$,
    $\breve{R}(x) = \bigcup \{ \breve{R}(a) \mid a \in A\}$ implies 
    $\breve{R}(x) = \breve{R}(b)$ for some  $b \in A$. 
\end{enumerate}
\end{corollary}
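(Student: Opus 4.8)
The plan is to obtain the statement by applying Lemma~\ref{lem:irreducible} verbatim to the inverse relation $\breve{R}$ in place of $R$. The key observation is that passing from $R$ to $\breve{R}$ simply interchanges the roles of the operators $^\UP$ and $^\Up$. Writing $S := \breve{R}$, the inverse of $S$ is $\breve{S} = R$, so the upper approximation operator induced by $S$ through \emph{its} inverse, namely $X \mapsto \{x \in U \mid \breve{S}(x) \cap X \neq \emptyset\}$, is literally the map $X \mapsto \{x \in U \mid R(x) \cap X \neq \emptyset\} = X^\UP$. Consequently $\wp(U)^\Up$ formed for $S$ coincides with $\wp(U)^\UP$ formed for $R$, and the basic neighbourhoods satisfy $\{x\}^\Up = S(x) = \breve{R}(x)$ for every $x \in U$.

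First I would record that Lemma~\ref{lem:irreducible} is stated for an arbitrary binary relation, so it may be invoked with $S = \breve{R}$ with no extra hypothesis. Substituting $S$ for $R$ in the equivalence of that lemma, condition~(i) reads ``$X^\Up$ is completely join-irreducible in $\wp(U)^\Up$'' computed for $S$, which by the identification above is exactly ``$X^\UP$ is completely join-irreducible in $\wp(U)^\UP$''. Likewise condition~(ii) reads ``$X^\Up = S(x)$ for some $x$ such that for all $A \subseteq U$, $S(x) = \bigcup \{ S(a) \mid a \in A \}$ implies $S(x) = S(b)$ for some $b \in A$''; since $S(a) = \breve{R}(a)$ and $X^\Up = X^\UP$, this becomes precisely ``$X^\UP = \breve{R}(x)$ for some $x$ such that for all $A \subseteq U$, $\breve{R}(x) = \bigcup \{ \breve{R}(a) \mid a \in A \}$ implies $\breve{R}(x) = \breve{R}(b)$ for some $b \in A$''. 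These are exactly the two conditions of the corollary, so the equivalence follows.

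There is essentially no genuine obstacle here; the only point requiring care is the bookkeeping of the double inverse $\breve{\breve{R}} = R$, which guarantees that the operator $^\Up$ attached to $\breve{R}$ is the operator $^\UP$ attached to $R$. Once this identification is made explicit, the corollary is an immediate specialization of Lemma~\ref{lem:irreducible}. I would also note that, in contrast to the motivating remark, no reflexivity of $R$ is actually used: the duality between $^\UP$ and $^\Up$ under inversion holds for every binary relation, which is consistent with the stated generality of the corollary.
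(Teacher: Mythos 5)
Your proposal is correct and follows essentially the same route as the paper, which simply observes that the corollary ``follows directly from Lemma~\ref{lem:irreducible}'' applied to the inverse relation $\breve{R}$; your write-up just makes the bookkeeping (that the operator $^\Up$ attached to $\breve{R}$ is the operator $^\UP$ attached to $R$, via $\breve{\breve{R}} = R$) explicit. Your closing observation is also apt: no reflexivity is needed, and indeed the paper's prefatory remark invoking reflexivity is superfluous, since both the lemma and the corollary are stated for arbitrary binary relations.
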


In the rest of this section, we assume that $R$ is a reflexive
binary relation on $U$. This is equivalent
to the fact that $x \in R(x)$. Note also that this means 
that  $x \in \mathcal{S}$ if and only if $R(x) = \{x\}$. 
In addition, if $x \in \mathcal{S}$, then 
$\{x\} \subseteq \{x\}^{\Up \DOWN} = R(x)^\DOWN = \{x\}^\DOWN
\subseteq \{x\}$. Thus, $R(x)^\DOWN =  \{x\}$ and, in particular,
$(R(x)^\DOWN, R(x)^\UP) = (\{x\}, \{x\}^\UP)$.
Our following lemma is technical in nature. We will need these properties in what follows.

\begin{lemma} \label{lem:singleton_join_irr}
Let $R$ be a reflexive relation on $U$ and 
$s \in \mathcal{S}$.
\begin{enumerate}[label={\rm (\roman*)}]
\item If $s \in \{b\}^\UP$, then $b = s$;
\item $\{s\}^\UP \in \mathcal{J}(\wp(U)^\UP)$;
\item $\{s\} = \{s\}^\Up \in \mathcal{J}(\wp(U)^\Up)$.
\end{enumerate}
\end{lemma}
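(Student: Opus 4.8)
The plan is to prove the three items in order, since each builds on the previous one and on the irreducibility criteria already established. Throughout I would keep in mind that $s \in \mathcal{S}$ together with reflexivity means $R(s) = \{s\}$, and that $\{b\}^\UP = \breve{R}(b)$ while $\{x\}^\Up = R(x)$.

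For part (i) I would simply unwind the definitions. The hypothesis $s \in \{b\}^\UP = \breve{R}(b)$ says $(s,b) \in R$, that is, $b \in R(s)$. Since $s$ is a singleton, $R(s) = \{s\}$, and this forces $b = s$.

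For part (ii) I would verify the criterion of Corollary~\ref{cor:irreducible} for $x = s$. Suppose $\breve{R}(s) = \bigcup \{ \breve{R}(a) \mid a \in A \}$ for some $A \subseteq U$. Reflexivity gives $s \in \breve{R}(s)$, so $s$ lies in some $\breve{R}(a)$ with $a \in A$; that is, $s \in \{a\}^\UP$, and part (i) yields $a = s$. Hence $s \in A$, so $\breve{R}(s)$ is itself one of the terms of the union, and $\breve{R}(s) = \breve{R}(b)$ holds with $b = s \in A$. By the corollary, $\{s\}^\UP = \breve{R}(s)$ is completely join-irreducible in $\wp(U)^\UP$.

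For part (iii) I would first record that $\{s\}^\Up = R(s) = \{s\}$, which gives the asserted equality. To establish complete join-irreducibility in $\wp(U)^\Up$, I would apply Lemma~\ref{lem:irreducible} with $x = s$. Suppose $\{s\} = R(s) = \bigcup \{ R(a) \mid a \in A \}$. The union cannot be empty, so $A \neq \emptyset$; and for each $a \in A$, reflexivity gives $a \in R(a) \subseteq \{s\}$, whence $a = s$. Therefore $A = \{s\}$ and $R(s) = R(b)$ with $b = s \in A$, so Lemma~\ref{lem:irreducible} applies. There is no genuinely hard step here; the whole content is feeding the definitions through the two irreducibility criteria. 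The one point needing care is the role of reflexivity: in (ii) it guarantees $s \in \breve{R}(s)$ so that the singleton property of part (i) can be invoked, and in (iii) it both rules out the empty index set and forces every contributing neighbourhood $R(a)$ to collapse onto $s$.
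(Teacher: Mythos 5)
Your proposal is correct and follows essentially the same route as the paper: part (i) by unwinding $s \in \breve{R}(b)$ into $b \in R(s) = \{s\}$, part (ii) by combining reflexivity with part (i) to verify the criterion of Corollary~\ref{cor:irreducible}, and part (iii) by using reflexivity to force every $R(a)$ in the union to coincide with $\{s\}$, verifying Lemma~\ref{lem:irreducible}. The only cosmetic difference is that in (iii) you conclude $A = \{s\}$ explicitly, whereas the paper argues directly that each nonempty $R(a) \subseteq \{s\}$ must equal $\{s\}^\Up$; the content is identical.
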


\begin{proof}
(i) As $\{b\}^\UP = \breve{R}(b)$, $s \in \{b\}^\UP$ gives
$b \in R(s) = \{s\}$, that is, $b = s$.

(ii) Suppose $\{s\}^\UP = \bigcup_{a \in A} \{a\}^\UP$ for some
$A \subseteq U$. Because $s \in \{s\}^\UP$, $s \in \{a\}^\UP$ for some 
$a \in A$. By (i), $s = a$. Thus, $\{x\}^\UP = \{a\}^\UP$ and
$\{s\}^\UP$ is completely join-irreducible in $\wp(U)^\UP$
by Corollary~\ref{cor:irreducible},

(iii) Let $\{s\} = R(s) = \{s\}^\Up = \bigcup_{a \in A} \{a\}^\Up$ 
for some $A \subseteq U$. Because the size of each $\{a\}^\Up$ is at 
least one, there exists $a \in A$ such that $\{s\}^\Up = \{a\}^\Up$.
\end{proof}

Our next proposition shows that the completely join-irreducible
elements of  $\wp(U)^\Up$ and $\wp(U)^\UP$ determine completely 
join-irreducible elements of $\mathrm{DM(RS)}$.

\begin{proposition} \label{prop:join_irred}
Let $R$ be a reflexive relation on $U$ and $x \in U$.
\begin{enumerate}[label = {\rm (\roman*)}]
    \item If $x \notin \mathcal{S}$ and $\{x\}^\UP$ is completely join-irreducible in  $\wp(U)^\UP$, then 
    $(\{x\}^\DOWN, \{x\}^\UP)$ is completely join-irreducible in 
    $\mathrm{DM(RS)}$.
    
    \item If $\{x\}^\Up$ is is completely join-irreducible in $\wp(U)^\Up$, then $(\{x\}^{\Up \DOWN}, \{x\}^{\Up \UP})$ is completely join-irreducible in $\mathrm{DM(RS)}$.
\end{enumerate}    
\end{proposition}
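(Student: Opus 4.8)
The plan is to reduce complete join-irreducibility in $\mathrm{DM(RS)}$ to complete join-irreducibility in the factor lattices $\wp(U)^\UP$ and $\wp(U)^\Up$, exploiting that by \eqref{Eq:Join} any join $\bigvee_{i\in I}(A_i,B_i)$ has second coordinate $\bigcup_{i} B_i$ (a genuine union, which is exactly the join in $\wp(U)^\UP$) and first coordinate $\big(\bigcup_{i} A_i\big)^{\Up\DOWN}$. In each part I would start from a representation of the element as a join $\bigvee_{i\in I}(A_i,B_i)$ with all $(A_i,B_i)\in\mathrm{DM(RS)}$, locate a single index $i_0$ whose summand already matches the element in one coordinate, and then use the defining conditions $A^{\Up\UP}\subseteq B$ and $A\cap\mathcal{S}=B\cap\mathcal{S}$ of $\mathrm{DM(RS)}$ to force a match in the other coordinate as well.

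For (i), the first step is to note that $x\notin\mathcal{S}$ forces $\{x\}^\DOWN=\emptyset$: by reflexivity $y\in R(y)$, so $R(y)\subseteq\{x\}$ would give $R(y)=\{x\}$ and hence $y=x$ with $R(x)=\{x\}$, i.e.\ $x\in\mathcal{S}$, contradicting the hypothesis. Now suppose $(\{x\}^\DOWN,\{x\}^\UP)=\bigvee_{i\in I}(A_i,B_i)$. Comparing second coordinates gives $\{x\}^\UP=\bigcup_{i}B_i$, a join in $\wp(U)^\UP$, so complete join-irreducibility of $\{x\}^\UP$ there produces an index $i_0$ with $B_{i_0}=\{x\}^\UP$. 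For the first coordinate, $A_{i_0}\subseteq\bigcup_{i}A_i\subseteq\big(\bigcup_{i}A_i\big)^{\Up\DOWN}=\{x\}^\DOWN=\emptyset$ by (GC2), whence $A_{i_0}=\emptyset=\{x\}^\DOWN$. Thus $(A_{i_0},B_{i_0})=(\{x\}^\DOWN,\{x\}^\UP)$ lies in the given set, and since $x\in\breve{R}(x)=\{x\}^\UP$ the element is nonzero, so completely join-irreducible.

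For (ii), write $\{x\}^\Up=R(x)$ and suppose $(\{x\}^{\Up\DOWN},\{x\}^{\Up\UP})=\bigvee_{i\in I}(A_i,B_i)$. I would work first through the first coordinate: from $\big(\bigcup_{i}A_i\big)^{\Up\DOWN}=\{x\}^{\Up\DOWN}$, apply ${}^\Up$ and use (GC6) and (GC4) to obtain $\bigcup_{i}A_i^\Up=\big(\bigcup_{i}A_i\big)^\Up=\{x\}^\Up$, a join in $\wp(U)^\Up$. Complete join-irreducibility of $\{x\}^\Up$ then yields $i_0$ with $A_{i_0}^\Up=\{x\}^\Up$; since $A_{i_0}\in\wp(U)^\DOWN$ satisfies $A_{i_0}=A_{i_0}^{\Up\DOWN}$, this upgrades to $A_{i_0}=\{x\}^{\Up\DOWN}$. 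To pin the second coordinate, I would combine $B_{i_0}\subseteq\{x\}^{\Up\UP}$ (since $(A_{i_0},B_{i_0})$ is a summand of a join equal to the element) with the $\mathrm{DM(RS)}$ condition $A_{i_0}^{\Up\UP}\subseteq B_{i_0}$; as $A_{i_0}^{\Up\UP}=\{x\}^{\Up\UP}$, the two inclusions give $B_{i_0}=\{x\}^{\Up\UP}$. Hence $(A_{i_0},B_{i_0})$ equals the given element, which is nonzero because $x\in R(x)\subseteq\{x\}^{\Up\UP}$.

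The step I expect to require the most care is the transfer in (ii) from the first-coordinate equality in $\wp(U)^\DOWN$ to a join equality in $\wp(U)^\Up$: one must verify that applying ${}^\Up$ and invoking (GC4) and (GC6) legitimately converts $\big(\bigcup_{i}A_i\big)^{\Up\DOWN}=\{x\}^{\Up\DOWN}$ into $\bigcup_{i}A_i^\Up=\{x\}^\Up$, and that the fixpoint identity $A_{i_0}=A_{i_0}^{\Up\DOWN}$ for elements of $\wp(U)^\DOWN$ recovers the first coordinate \emph{exactly}, not merely up to ${}^{\Up\DOWN}$-closure. Once this bookkeeping is settled, the membership conditions of $\mathrm{DM(RS)}$ carry out the remaining matching of the second coordinate with no further difficulty.
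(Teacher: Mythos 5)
Your proposal is correct and follows essentially the same route as the paper's own proof: both use the join formula \eqref{Eq:Join} to read off the coordinates, reduce to complete join-irreducibility of $\{x\}^\UP$ in $\wp(U)^\UP$ (part (i)) and of $\{x\}^\Up$ in $\wp(U)^\Up$ via applying ${}^\Up$ and (GC4), (GC6) (part (ii)), and then pin down the remaining coordinate using $\emptyset = \{x\}^\DOWN$ in (i) and the membership condition $A^{\Up\UP} \subseteq B$ of $\mathrm{DM(RS)}$ in (ii). Your explicit check that the elements are nonzero (handling the empty-join case) is a small point the paper leaves implicit, but otherwise the arguments coincide step for step.
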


\begin{proof}
(i) Because $x \notin \mathcal{S}$, $R(x) \nsubseteq \{x\}$ and so
$\{x\}^\DOWN = \emptyset$. Thus, 
$(\{x\}^\DOWN, \{x\}^\UP) = (\emptyset, \{x\}^\UP)$. Suppose 
that $(\emptyset, \{x\}^\UP) = 
\bigvee_{i \in I} (A_i,B_i)$ for some index set $I$ and
$(A_i,B_i)_{i \in I} \subseteq \mathrm{DM(RS)}$. Since
$\emptyset = \bigvee_{i \in I} A_i = (\bigcup_{i \in I} A_i)^{\Up \DOWN} 
\supseteq  \bigcup_{i \in I} A_i$, we have $A_i = \emptyset$ for all $i \in I$.

Now $\{x\}^\UP = \bigvee B_i$ gives that $\{x\}^\UP = B_k$ for some
$k \in I$ by Lemma~\ref{lem:irreducible}, because $\{x\}^\UP$ is completely join-irreducible in $\wp(U)^\UP$. 
Therefore, $(A_k,B_k) = (\emptyset, \{x\}^\UP)$ and 
$(\emptyset, \{x\}^\UP)$ is completely join-irreducible.

\medskip%

(ii) Let $\{x\}^\Up$ be completely join-irreducible in $\wp(U)^\Up$. 
Suppose that
\[ (\{x\}^{\Up \DOWN}, \{x\}^{\Up \UP}) = {\bigvee}_{i \in I}
(A_i,B_i) = \big ( ({\bigcup}_{i \in I} A_i )^{\Up\DOWN},
{\bigcup}_{i \in I} B_i \big ). \]
This means that 
\[
\{x\}^{\Up\DOWN} = ({\bigcup}_{i \in I} A_i )^{\Up\DOWN}.
\]
We obtain
\[
\{x\}^\Up = \{x\}^{\Up\DOWN \Up} = 
({\bigcup}_{i \in I} A_i )^{\Up\DOWN \Up} =
({\bigcup}_{i \in I} A_i )^{\Up} =
{\bigcup}_{i \in I} {A_i}^{\Up}.
\]
Because  $\{x\}^\Up$ is completely join-irreducible in $\wp(U)^\Up$, $\{x\}^\Up = {A_k}^\Up$ for some $k \in I$. This implies
$\{x\}^{\Up \DOWN} = {A_k}^{\Up\DOWN} = A_k$. Note that
$A_k \in \wp(A)^\DOWN$.

The fact ${A_k}^{\Up\UP} \subseteq B_k$ implies
\[ \{x\}^{\Up\UP} = {A_k}^{\Up\UP} \subseteq B_k .\]
On the other hand, $\{x\}^{\Up\UP} = {\bigcup}_{i \in I} B_i \supseteq B_k$.
Thus, $\{x\}^{\Up\UP} = B_k$. We have proved that 
$(\{x\}^{\Up \DOWN}, \{x\}^{\Up \UP})
= (A_k,B_k)$, that is,  $(\{x\}^{\Up \DOWN}, \{x\}^{\Up \UP})$ is completely 
join-irreducible.
\end{proof}

We can now write the following characterization of complete join-irreducible
elements.

\begin{theorem} \label{thm:join_irreducibles}
Let $R$ be a reflexive relation on $U$. The set of completely join-irreducible
elements of $\mathrm{DM(RS)}$ is
\begin{gather} \label{eq:Join1}
\{ (\{x\}^{\Up \DOWN},\{x\}^{\Up \UP}) \mid 
\text{$\{x\}^\Up$ is completely join-irreducible in $\wp(U)^\Up$} \} \\
\label{eq:Join2}
\cup \  \{ (\{x\}^\DOWN,\{x\}^\UP) \mid 
\text{$\{x\}^\UP$ is completely join-irreducible in $\wp(U)^\UP$ 
and $x \notin \mathcal{S}$} \}. 
\end{gather}
\end{theorem}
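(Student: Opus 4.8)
The inclusion of the sets \eqref{eq:Join1} and \eqref{eq:Join2} into $\mathcal{J}(\mathrm{DM(RS)})$ is already supplied by Proposition~\ref{prop:join_irred}, so the plan is to prove the reverse inclusion: every completely join-irreducible $(A,B)$ of $\mathrm{DM(RS)}$ belongs to one of the two displayed sets. The driving tool is Lemma~\ref{lem:rule}, which exhibits $(A,B)$ as the join of the family $\{(\{a\}^{\Up\DOWN},\{a\}^{\Up\UP})\mid a\in A\}\cup\{(\{b\}^\DOWN,\{b\}^\UP)\mid b\in B^\Down\}$. Since $(A,B)$ is completely join-irreducible, it must coincide with one of these joinands, leaving two cases to analyse: either $(A,B)=(\{a\}^{\Up\DOWN},\{a\}^{\Up\UP})$ for some $a\in A$, or $(A,B)=(\{b\}^\DOWN,\{b\}^\UP)$ for some $b\in B^\Down$.

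In the first case I would show that $\{a\}^\Up$ is completely join-irreducible in $\wp(U)^\Up$ by verifying the criterion of Lemma~\ref{lem:irreducible}. Suppose $R(a)=\bigcup\{R(a')\mid a'\in A'\}$ and form, inside $\mathrm{DM(RS)}$, the join of the elements $(\{a'\}^{\Up\DOWN},\{a'\}^{\Up\UP})$ for $a'\in A'$. By \eqref{Eq:Join} its second coordinate is $\bigcup_{a'}R(a')^\UP=R(a)^\UP=\{a\}^{\Up\UP}$, and its first coordinate is $(\bigcup_{a'}R(a')^\DOWN)^{\Up\DOWN}$. Here I would use the identity $R(x)^{\DOWN\Up}=R(x)$ --- which is just $\{x\}^{\Up\DOWN\Up}=\{x\}^\Up$ from (GC6) --- to compute $(\bigcup_{a'}R(a')^\DOWN)^\Up=\bigcup_{a'}R(a')^{\DOWN\Up}=\bigcup_{a'}R(a')=R(a)$, so that the first coordinate equals $R(a)^\DOWN=\{a\}^{\Up\DOWN}$. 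Thus this join is exactly $(A,B)$; complete join-irreducibility forces $(A,B)=(\{a'\}^{\Up\DOWN},\{a'\}^{\Up\UP})$ for some $a'\in A'$, whence $R(a')^\DOWN=R(a)^\DOWN$, and applying $^\Up$ (again via (GC6)) gives $R(a')=R(a)$. This is precisely the condition of Lemma~\ref{lem:irreducible}(ii), so $(A,B)$ lies in \eqref{eq:Join1}.

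In the second case I would split according to whether $b\in\mathcal{S}$. If $b\in\mathcal{S}$, then $\{b\}^\DOWN=\{b\}$ and $\{b\}^\Up=R(b)=\{b\}$, so $(A,B)=(\{b\}^{\Up\DOWN},\{b\}^{\Up\UP})$ with $\{b\}^\Up$ completely join-irreducible by Lemma~\ref{lem:singleton_join_irr}(iii); hence $(A,B)$ is of the form \eqref{eq:Join1}. If $b\notin\mathcal{S}$, then $\{b\}^\DOWN=\emptyset$ and $(A,B)=(\emptyset,\{b\}^\UP)$, and I would show $\{b\}^\UP$ is completely join-irreducible in $\wp(U)^\UP$ using Corollary~\ref{cor:irreducible}. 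Given $\breve{R}(b)=\bigcup\{\breve{R}(c)\mid c\in C\}$, each $\{c\}^\UP\subseteq\{b\}^\UP$; if some $c\in C$ were a singleton then $c\in\{c\}^\UP\subseteq\breve{R}(b)$ would force $b\in R(c)=\{c\}$, i.e.\ $b=c\in\mathcal{S}$, contradicting $b\notin\mathcal{S}$. Hence every $c\in C$ has $\{c\}^\DOWN=\emptyset$, so the join in $\mathrm{DM(RS)}$ of the rough sets $(\{c\}^\DOWN,\{c\}^\UP)\in\mathrm{RS}$ has first coordinate $\emptyset$ and second coordinate $\bigcup_c\breve{R}(c)=\breve{R}(b)$, that is, it equals $(A,B)$. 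Complete join-irreducibility then yields $(A,B)=(\{c'\}^\DOWN,\{c'\}^\UP)$ and $\breve{R}(c')=\breve{R}(b)$, so by Corollary~\ref{cor:irreducible} the pair lies in \eqref{eq:Join2}.

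The main obstacle is the reverse inclusion, and within it the need to guarantee that the reconstructing joins have the correct first coordinate rather than a strictly smaller one. In the first case this is secured by the identity $R(x)^{\DOWN\Up}=R(x)$, which makes the first coordinate collapse exactly to $\{a\}^{\Up\DOWN}$; in the second case it is secured by the observation that a decomposition of $\breve{R}(b)$ can involve no singletons when $b\notin\mathcal{S}$, keeping the first coordinate equal to $\emptyset$. These two facts are exactly what transfer complete join-irreducibility in $\mathrm{DM(RS)}$ back to the component lattices $\wp(U)^\Up$ and $\wp(U)^\UP$.
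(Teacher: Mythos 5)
Your proof is correct and follows essentially the same route as the paper's: one inclusion via Proposition~\ref{prop:join_irred}, and for the converse the decomposition of Lemma~\ref{lem:rule} together with complete join-irreducibility to isolate the two cases, which are then transferred back to $\wp(U)^\Up$ and $\wp(U)^\UP$ through Lemma~\ref{lem:irreducible} and Corollary~\ref{cor:irreducible}, using (GC4), (GC6), and the observation that no singleton can occur in a decomposition of $\{b\}^\UP$ when $b \notin \mathcal{S}$. The only cosmetic difference is that in the subcase $b \in \mathcal{S}$ you invoke Lemma~\ref{lem:singleton_join_irr}(iii) directly, where the paper defers to its case (i).
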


\begin{proof} In view of Proposition~\ref{prop:join_irred}, 
elements in sets \eqref{eq:Join1} and \eqref{eq:Join2} are 
completely join-irreducible in $\mathrm{DM(RS)}$.

Conversely, assume now that $(A,B)$ is a completely join-irreducible 
element in $\mathrm{DM(RS)}$. Then, in view of Lemma~\ref{lem:rule}, 
(i) $(A,B) = (\{a\}^{\Up \DOWN}, \{a\}^{\Up \UP})$ for some $a \in A$ or
(ii) $(A,B) =  (\{b\}^\DOWN,\{b\}^\UP)$, where
$b \in B^\Down \subseteq B$.

(i) Clearly, in the case $(A,B) = (\{a\}^{\Up \DOWN}, \{a\}^{\Up \UP})$
we have to prove only that $\{a\}^\Up$ is completely join-irreducible
in $\wp(U)^\Up$. This can be done by using the equivalence of
Lemma~\ref{lem:irreducible}. 
Assume that $R(a) = \bigcup \{R(c)\mid c\in C\}$ for some $C \subseteq U$.
This gives that 
\begin{align*}
\{a\}^{\Up \DOWN} = R(a)^\DOWN 
&= \big (\bigcup \{R(c) \mid c \in C\} \big )^\DOWN 
= \big (\bigcup \{\{c\}^\Up \mid c \in C\} \big )^\DOWN \\
&= \big (\bigcup \{\{c\}^{\Up \DOWN \Up} \mid c \in C\} \big )^\DOWN 
= \big (\bigcup \{\{c\}^{\Up \DOWN} \mid c \in C\} \big )^{\Up \DOWN}.
\end{align*}
Similarly,
\begin{align*}
\{a\}^{\Up \UP} = R(a)^\UP 
&= \big (\bigcup \{R(c) \mid c \in C\} \big )^\UP 
= \bigcup \{R(c)^\UP \mid c \in C\}  \\
&= \bigcup \{ \{c\}^{\Up\UP} \mid c \in C\}  \\
\end{align*}
This means that 
\begin{align*}
 (\{a\}^{\Up \DOWN}, \{a\}^{\Up \UP}) 
 & =  
 \Big (\big (\bigcup \{\{c\}^{\Up \DOWN} \mid c \in C\} \big )^{\Up \DOWN},
 \bigcup \{ \{c\}^{\Up\UP} \mid c \in C\} \Big ) \\
 &=  \bigvee\big \{(\{c\}^{\Up \DOWN}, \{c\}^{\Up \UP })\mid c\in C \big \}.
\end{align*}
Since $(\{a\}^{\Up \DOWN}, \{a\}^{\Up \UP})$ is
completely join-irreducible,
$(\{a\}^{\Up \DOWN}, \{a\}^{\Up \UP}) =
 (\{c\}^{\Up \DOWN}, \{c\}^{\Up \UP})$ for some $c\in C$. 
Therefore, $\{a\}^{\Up \DOWN} = \{c\}^{\Up \DOWN}$ and 
$\{a\}^\Up =\{a\}^{\Up \DOWN \Up} =
 \{c\}^{\Up \DOWN \Up} = \{c\}^\Up$.
This proves that $\{a\}^\Up$ is completely join-irreducible
in $\wp(U)^\Up$.

(ii) Suppose that $(A,B) =  (\{b\}^\DOWN,\{b\}^\UP)$, where
$b \in B^\Down$. If $b \in \mathcal{S}$, then
$\{b\}^\Up = R(b) = \{b\}$ gives that
$(\{b\}^\DOWN,\{b\}^\UP) =  (\{b\}^{\Up \DOWN}, \{b\}^{\Up \UP})$.
So, this case is  already covered by part (i).

If $b \notin \mathcal{S}$, then
$(A,B) = (\{b\}^\DOWN,\{b\}^\UP) = (\emptyset,\{b\}^\UP)$.
We show that $\{b\}^\UP$ is completely join-irreducible in
$\wp(U)^\UP$.  Suppose that 
$\{b\}^\UP = \breve{R}(b)=\ \bigcup \{\breve{R}(d)\mid d\in D\}$ 
for some $D\subseteq U$. Since $\breve{R}$ is reflexive, we
have that $d\in \breve{R}(d) \subseteq \breve{R}(b)$ for all $d\in D$. 
Thus, $b\in R(d)$ for each $d\in D$. Observe that this yields 
$d \notin \mathcal{S}$  for every $d\in D$.
Indeed, $d\in \mathcal{S}$ and $b \in R(d)$ would imply
$b = d \in \mathcal{S}$, contradicting $b \notin \mathcal{S}$.
Therefore, $\{d\}^\DOWN = \emptyset$ for all $d\in D$. 

We can now write 
\[ {\bigvee}_{d \in D} (\{d\}^\DOWN, \{d\}^\UP) =
\Big  ( \big ( \bigcup \{d\}^\DOWN \big )^{\Up\DOWN},
\bigcup \{d\}^\UP \Big )=
\Big ( \emptyset^{\Up\DOWN}, \bigcup \{d\}^\UP \Big ) = 
\Big (\emptyset, \bigcup \breve{R}(d) \Big ).
\]
Since $\bigcup \breve{R}(d) = \breve{R}(b) = \{b\}^\UP$, we have
\[ {\bigvee}_{d \in D} (\{d\}^\DOWN, \{d\}^\UP) =
(\emptyset, \{b\}^\UP ).\]
Because $(\emptyset ,\{b\}^\UP)$ is completely
join-irreducible in $\mathrm{DM(RS)}$, we get 
$(\emptyset, \{b\}^\UP) =
(\{d\}^\DOWN,\{d\}^\UP) = 
(\emptyset,\{d\}^\UP)$ for some $d\in D$. 
This yields $\breve{R}(b) = \{b\}^\UP = \{d\}^\UP = \breve{R}(d)$.
By Corollary~\ref{cor:irreducible},
$\{b\}^\UP$ is completely join-irreducible in $\wp(U)^\UP$.
\end{proof}

We will now present two examples. It is well-known that rough set lattice $\mathrm{RS}$ determined by an equivalence is a regular double Stone algebra \cite{Come93}. By
Example~\ref{exa:JoinRS} we can induce similar
structures also by relations which are merely reflexive.
Example~\ref{exa:no_join_irreducibles} shows that there
are such reflexive relations that $\wp(U)^\Up$ has no
join-irreducbiles.

\begin{example} \label{exa:JoinRS}
Let $R$ be a reflexive relation on $U = \{1,2,3\}$
such that 
\[
R(1) = \{1\}^\Up = \{1,2,3\}, \quad 
R(2) = \{2\}^\Up = \{2\}, \quad 
R(3) = \{3\}^\Up = \{1,3\}. 
\]
Note that the relation $R$ is not symmetric, because $(1,2) \in R$, but $(2,1) \notin R$. Similarly, $R$ is not transitive, because $(3,1) \in R$ and $(1,2)\in R$,
but $(3,2) \notin R$.

If $R$ is considered a directional similarity relation, then
element $1$ is similar to $2$ and $3$, and element $3$ is 
similar to $1$. Element $2$ is not similar to any other element, 
so it is a singleton and $\mathcal{S} = \{2\}$.

The inverse $\breve{R}$ of $R$ can be interpreted to that 
$\breve{R}(x)$ consists of elements which are similar to $x$.
We can now form the sets:
\[ \breve{R}(1) = \{1\}^\UP = \{1,3\}, \quad
\breve{R}(2) = \{2\}^\UP = \{1,2\}, \quad
\breve{R}(3) = \{3\}^\UP = \{1,3\}.\] 
We have the complete lattices
 \[
 \wp(U)^\UP = \{\emptyset, \{1,2\}, \{1,3\}, U\} 
 \quad 
 \text{ and } 
 \quad
 \wp(U)^\Up = \{\emptyset, \{2\}, \{1,3\}, U\}.
 \]
The completely join-irreducible elements of these lattices are
\[ \mathcal{J}(\wp(U)^\UP) = \{ \{1,2\}, \{1,3\}\} 
 \quad 
 \text{ and } 
 \quad
\mathcal{J}(\wp(U)^\Up) = \{  \{2\}, \{1,3\}\}.\]

According to Theorem~\ref{thm:join_irreducibles}, 
the join-irreducibles of $\mathrm{DM(RS)}$ are
\begin{gather*}
\{ (\{x\}^{\Up \DOWN},\{x\}^{\Up \UP}) \mid 
\{x\}^\Up \in \mathcal{J}(\wp(U)^\Up) \}  =
\{ (\{2\}^\DOWN, \{2\}^\UP),  (\{1,3\}^\DOWN, \{1,3\}^\UP) \} \\
 = \{ (\{2\}, \{1,2\}),  (\{3\}, \{1,3\}) \}.
\end{gather*} 
together with
\[
\{ (\emptyset,\{x\}^\UP) \mid \text{$\{x\}^\UP \in 
\mathcal{J}(\wp(U)^\UP)$ and $x \notin \mathcal{S}$} \} 
= \{ (\emptyset, \{1,3\}) \}.
\]

The complete lattice $\mathrm{DM(RS)}$ is given in 
Figure~\ref{fig1:completion1}. 
It forms a well-known regular double Stone algebra isomorphic to
$\mathbf{2} \times \mathbf{3}$.
It can be easily verified that in this example,
$\mathrm{RS}$ is equal to $\mathrm{DM(RS)}$.
\begin{figure}[h]
\includegraphics[width=70mm]{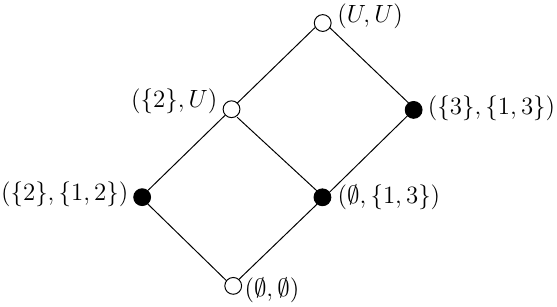}
\caption{
The complete lattice $\mathrm{DM(RS)}$ of Example~\ref{exa:JoinRS}, forming a regular double Stone algebra isomorphic to $\mathbf{2} \times \mathbf{3}$.}
\label{fig1:completion1}
\end{figure}
\end{example}

\begin{example} \label{exa:no_join_irreducibles}
Any binary relation $R$ on $U$ is reflexive if and only if $x \in R(x)$ for 
all  $x \in U$.
We set $U = \mathbb{N} = \{1,2,3,\ldots\}$. Let us
define a relation $R$ on $U$ such that the set of
completely join-irreducible elements of $\wp(U)^\Up$ will be empty. 
As we already noted, every completely join-irreducible element of 
$\wp(U)^\Up$ must be of the form $R(x)$ for some $x \in U$.

Let us denote by $[a]_n$ the \emph{congruence class} of $a \in \mathbb{N}$ 
modulo $n$, that is, 
\[ [a]_n = \{x \in \mathbb{N} \mid x \equiv a \pmod n\} .\] 
We define the sets $R(n)$ in terms of the congruence classes modulo
$2^{\lfloor \lg n \rfloor}$. Here $\lfloor x \rfloor$ denotes the least integer smaller or equal to $x$ and $\lg x$ is the 2-base logarithm of $x$.

For any $n \in \mathbb{N}$,
\[ R(n) = [n]_{k}, \text{ where $k = 2^{\lfloor \lg n \rfloor}$}.\]

\begin{itemize}
\item If $\lfloor \lg n \rfloor = 0$, then $k = 2^0 = 1$ and
\begin{gather*}
R(1) = [1]_1 = \mathbb{N}.  
\end{gather*}

\item If $\lfloor \lg n \rfloor = 1$, then $k = 2^1 = 2$ and
\begin{gather*}
    R(2) = [2]_2 = \{2,4,6,8, 10\ldots\}, \quad
    R(3) = [3]_2 = \{1,3,5,7,9\ldots\}.
\end{gather*}

\item If $\lfloor \lg n \rfloor = 2$, then $k = 2^2 = 4$ and
\begin{gather*}
R(4) = [4]_4 = \{4,8,12,16,20\ldots\}, \quad
R(5) = [5]_4 = \{1,5,9,13,17,\ldots\},\\
R(6) = [6]_4 = \{2,6,10,14,18,\ldots\}, \quad 
R(7) = [4]_4 = \{3,7,11,15,19,\ldots\}.
\end{gather*}

\item If $\lfloor \lg n \rfloor = 3$, then $k = 2^3 = 8$ and
\[ R(n) =  [n]_8  \text{ for $8 \leq n \leq 15$.}\]

\item If $\lfloor \lg n \rfloor = 4$, then $k = 2^3 = 16$ and
\[ R(n) =  [n]_{16}  \text{ for $16 \leq n \leq 31$.}\]
\end{itemize}

The idea is that each $R(n)$ is divided into two by taking every second element. If $n$ is even, $R(n)$ is split into  $R(2n)$ and $R(2n+2)$. If $n$
is odd, then $R(n)$ will we split into $R(2n-1)$ and $R(2n+1)$.
Because each $R(n)$ can be always partitioned into two, there
cannot be join-irreducible elements in $\wp(U)^\Up$. 
\end{example}

Let $L$ be a lattice with the least element $0$. An element 
$a$ is an \emph{atom} of $L$ if it covers $0$, that is, $0 \prec a$ \cite[p.~113]{Davey02}. Our final result in this section describes the atoms of $\mathrm{DM(RS)}$.

\begin{proposition} \label{prop:atoms}
If $R$ is a reflexive relation on $U$,
the set of the atoms of $\mathrm{DM(RS)}$ is 
\begin{equation} \label{eq:atoms}
\{ (\{x\}^\DOWN,\{x\}^\UP ) \mid 
\text{$\{x\}^\UP$ is an atom in $\wp(U)^\UP$} \}.
\end{equation}
\end{proposition}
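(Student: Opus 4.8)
The plan is to derive both inclusions of \eqref{eq:atoms} by squeezing an arbitrary minimal nonzero element of $\mathrm{DM(RS)}$ between the principal rough sets $(\{x\}^\DOWN,\{x\}^\UP)$, and then to transfer atomicity from $\mathrm{DM(RS)}$ to the single factor $\wp(U)^\UP$. Two preliminary facts carry most of the weight. Since $R$ is reflexive, applying (Ref2) twice gives $A\subseteq A^\Up\subseteq A^{\Up\UP}$, and the defining inequality of $\mathrm{DM(RS)}$ gives $A^{\Up\UP}\subseteq B$; hence $A\subseteq B$ for every $(A,B)\in\mathrm{DM(RS)}$. Consequently the least element is $(\emptyset,\emptyset)$, and any nonzero element has $B\neq\emptyset$, so $B=B^{\Down\UP}$ forces $B^\Down\neq\emptyset$. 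Moreover each pair $(\{x\}^\DOWN,\{x\}^\UP)$ is the rough set of $\{x\}$, hence lies in $\mathrm{DM(RS)}$, and by Lemma~\ref{lem:singleton_join_irr}(i) the only singleton that can belong to $\{x\}^\UP=\breve R(x)$ is $x$ itself, and then only when $x\in\mathcal{S}$.

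For the inclusion ``$\supseteq$'' I would fix $x$ with $\{x\}^\UP$ an atom of $\wp(U)^\UP$ and take any $(A',B')$ with $(\emptyset,\emptyset)<(A',B')\le(\{x\}^\DOWN,\{x\}^\UP)$. Then $B'\neq\emptyset$, and since $B'\in\wp(U)^\UP$ with $B'\subseteq\{x\}^\UP$ an atom, $B'=\{x\}^\UP$. To identify $A'$, I use $A'\cap\mathcal{S}=B'\cap\mathcal{S}=\{x\}^\UP\cap\mathcal{S}$: if $x\notin\mathcal{S}$ then $\{x\}^\DOWN=\emptyset$, so $A'=\emptyset=\{x\}^\DOWN$; if $x\in\mathcal{S}$ then $x\in\{x\}^\UP\cap\mathcal{S}=A'\cap\mathcal{S}$, whence $x\in A'\subseteq\{x\}^\DOWN=\{x\}$ and again $A'=\{x\}^\DOWN$. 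Thus $(A',B')=(\{x\}^\DOWN,\{x\}^\UP)$, so the pair is an atom.

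For the inclusion ``$\subseteq$'' let $(A,B)$ be an atom. Picking $b\in B^\Down$, the pair $(\{b\}^\DOWN,\{b\}^\UP)$ is nonzero and lies below $(A,B)$: the second coordinate because $\{b\}^\UP\subseteq B^{\Down\UP}=B$, the first by the singleton argument of Lemma~\ref{lem:rule} ($\{b\}^\DOWN$ is $\emptyset$ when $b\notin\mathcal{S}$ and is $\{b\}\subseteq A$ when $b\in\mathcal{S}$, since then $b\in B\cap\mathcal{S}=A\cap\mathcal{S}$). Atomicity forces $(A,B)=(\{b\}^\DOWN,\{b\}^\UP)$, and it remains only to show $\{b\}^\UP$ is an atom of $\wp(U)^\UP$. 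Suppose to the contrary that $\emptyset\subsetneq C\subsetneq\{b\}^\UP$ with $C\in\wp(U)^\UP$.

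The main obstacle is producing from such a $C$ an element of $\mathrm{DM(RS)}$ lying strictly between the bottom and $(A,B)$, because the natural candidate $(\emptyset,C)$ belongs to $\mathrm{DM(RS)}$ only when $C\cap\mathcal{S}=\emptyset$. By Lemma~\ref{lem:singleton_join_irr}(i) any singleton in $C\subseteq\{b\}^\UP$ equals $b$ and forces $b\in\mathcal{S}$; hence, unless both $b\in\mathcal{S}$ and $b\in C$, we have $C\cap\mathcal{S}=\emptyset$, so $(\emptyset,C)$ is strictly intermediate, contradicting atomicity. In the single remaining case $b\in\mathcal{S}$ and $b\in C$, I would instead rule $C$ out directly: writing $C=C^{\Down\UP}=\bigcup_{c\in C^\Down}\breve R(c)$, the membership $b\in C$ yields some $c\in C^\Down$ with $b\in\breve R(c)$, i.e.\ $c\in R(b)=\{b\}$, so $b=c\in C^\Down$ and $\{b\}^\UP=\breve R(b)\subseteq C^{\Down\UP}=C$, contradicting $C\subsetneq\{b\}^\UP$. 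Hence no proper nonzero $C$ exists, $\{b\}^\UP$ is an atom of $\wp(U)^\UP$, and the two inclusions establish \eqref{eq:atoms}.
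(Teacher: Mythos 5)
Your proof is correct, and while the ``$\supseteq$'' inclusion (showing each $(\{x\}^\DOWN,\{x\}^\UP)$ with $\{x\}^\UP$ an atom of $\wp(U)^\UP$ is an atom of $\mathrm{DM(RS)}$) is essentially the paper's own argument, your ``$\subseteq$'' inclusion takes a genuinely different and more self-contained route. The paper starts from the fact that atoms are completely join-irreducible and invokes Theorem~\ref{thm:join_irreducibles} to put an atom $(A,B)$ into one of the two forms $(\emptyset,\{x\}^\UP)$ or $(\{x\}^{\Up\DOWN},\{x\}^{\Up\UP})$; then, given a proper nonempty $C\in\wp(U)^\UP$ below $\{x\}^\UP$, it extracts an element $z\notin\mathcal{S}$ with $\{z\}^\UP\subsetneq\{x\}^\UP$, so that $(\emptyset,\{z\}^\UP)=(\{z\}^\DOWN,\{z\}^\UP)$ is automatically a rough set strictly between the bottom and $(A,B)$ --- this side-steps the question of whether $(\emptyset,C)$ itself lies in $\mathrm{DM(RS)}$. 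You instead bypass the classification of completely join-irreducibles entirely: picking $b\in B^\Down$, you squeeze $(\{b\}^\DOWN,\{b\}^\UP)$ below the atom and get equality, and then you confront the membership problem for $(\emptyset,C)$ head-on, splitting on whether $C\cap\mathcal{S}=\emptyset$ (in which case $(\emptyset,C)$ is a legitimate strictly intermediate element of $\mathrm{DM(RS)}$) or $b\in\mathcal{S}\cap C$ (which you rule out by showing $C$ would then absorb all of $\{b\}^\UP=\breve{R}(b)$, since $b\in\mathcal{S}$ forces $b\in C^\Down$). The paper's proof is shorter because it recycles the machinery of Theorem~\ref{thm:join_irreducibles} and Lemma~\ref{lem:rule}; yours needs only Lemma~\ref{lem:singleton_join_irr}(i), reflexivity, the Galois-connection identities, and the definition of $\mathrm{DM(RS)}$, so it would stand even if the proposition were placed before the characterization of completely join-irreducible elements.
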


\begin{proof}
First, we prove that all elements of \eqref{eq:atoms} are atoms 
in $\mathrm{DM(RS)}$. 
Assume that $(A,B) \in \mathrm{DM(RS)}$ and
$(\emptyset,\emptyset) < (A,B) \leq (\{x\}^\DOWN,\{x\}^\UP)$.
First note that $B \neq \emptyset$, because $A \subseteq B$.
Let $x$ be such that
$\{x\}^\UP$ is an atom in $\wp(U)^\UP$. 
Now $\emptyset < B \leq \{x\}^\UP$ gives
$B = \{x\}^\UP$ as $\{x\}^\UP$ is an atom of
$\wp(U)^\UP$.
If $x \notin \mathcal{S}$, then $A \subseteq 
\{x\}^\DOWN = \emptyset$
gives $A = \emptyset$. Hence, $(A,B) = (\emptyset,\{x\}^\UP)$,
which proves that $(\{x\}^\DOWN,\{x\}^\UP) = 
(\emptyset,\{x\}^{\blacktriangle})$ is an atom of
$\mathrm{DM(RS)}$.
If $x\in \mathcal{S}$, then 
$(\{x\}^\DOWN,\{x\}^\UP) =  (\{x\},\{x\}^\UP)$.
Now, by the definition of 
$\mathrm{DM(RS)}$, $x\in B\cap \mathcal{S} =
A \cap \mathcal{S}$. Thus,
$x \in A \subseteq \{x\}$ and $A = \{x\}$.
This proves that
$(\{x\}^\DOWN,\{x\}^\UP) =  (\{x\},\{x\}^\UP)$ is an atom of 
$\mathrm{DM(RS)}$.

\smallskip%
Conversely, suppose that $(A,B)$ is an atom of $\mathrm{DM(RS)}$. 
We are going to show that $(A,B)$ belongs to the set
\eqref{eq:atoms}. Observe first 
that for any $x\in U$ and $\emptyset \neq C\in \wp(U)^\UP$,  
$C \subset \{x\}^\UP$ yields that there exists $z\notin \mathcal{S}$ 
with $\{z\}^\UP \subset \{x\}^\UP$. 
Indeed, since $C = Z^\UP$ for some $\emptyset\neq Z\subseteq U$,
there is $z\in Z$ with 
$\{z\}^\UP \subseteq Z^\UP = C \subseteq \{x\}^\UP$. 
Now, $z\in \{z\}^\UP \subset \{x\}^\UP = \breve{R}(x)$ implies
$x\in R(z)$. If we assume that $z \in \mathcal{S}$, then 
$x\in R(z)=\{z\}$ gives $x=z$ and $\{x\}^\UP = \{z\}^\UP$, a contradiction. Hence, $z\notin \mathcal{S}$.

\smallskip%

Since atoms are  completely join-irreducible elements, according to Theorem~\ref{thm:join_irreducibles}, $(A,B)$ must be of the form
\begin{equation}\tag{A}
(\emptyset,\{x\}^\UP)=(\{x\}^\DOWN,\{x\}^\UP), 
\text{ where $x \notin \mathcal{S}$
and $\{x\}^\UP \in \mathcal{J}(\wp(U)^\UP)$}
\end{equation}
or
\begin{equation} \tag{B}
(\{x\}^{\Up\DOWN},\{x\}^{\Up\UP}), 
 \text{where $\{x\}^\Up \in\mathcal{J}(\wp(U)^\Up)$}.
\end{equation}

In case (A), take any $C \in \wp(U)^\UP$ with 
$\emptyset \subset C \subseteq \{x\}^\UP$.
If $C \neq\{x\}^\UP$, then, by the above, there exists
$z \notin \mathcal{S}$ such that 
$\{z\}^\UP \subset \{x\}^\UP$. Now, 
$(\emptyset,\{z\}^\UP) =(\{z\}^\DOWN,\{z\}^\UP)$ belongs to
$ \mathrm{DM(RS)}$ and
$(\emptyset,\emptyset) \subset (\emptyset,\{z\}^\UP) < 
(\emptyset, \{x\}^\UP) = (A,B)$. This is a contradiction, 
because $(A,B)$ is an atom of $\mathrm{DM(RS)}$. Thus, 
we must have that $C=\{x\}^\UP$, which proves that
$\{x\}^\UP$ is an atom of $\wp(U)^\UP$.

In case (B), observe that $x\in \mathcal{S}$ is necessary. 
Indeed, if $x\notin \mathcal{S}$, then 
$\{x\}\subseteq\{x\}^\Up$ and $\{x\}^\DOWN = \emptyset$ would imply $(\emptyset,\emptyset) \neq 
(\{x\}^\DOWN,\{x\}^\UP) = (\emptyset,\{x\}^\UP)<
(\{x\}^{\Up\DOWN},\{x\}^{\Up\UP})$. 
As $(A,B) = (\{x\}^{\Up\DOWN},\{x\}^{\Up\UP})$
is an atom of $\mathrm{DM(RS)}$, we have a contradiction. 
Hence, $x\in S$ and
$(A,B)=(\{x\}^{\Up\DOWN},\{x\}^{\Up\UP})=(\{x\},\{x\}^\UP)$. 

Let  $C \in\wp(U)^\UP$ be such that
$\emptyset \subset C\subseteq\{x\}^\UP$. 
If $C\neq\{x\}^\UP$, then there is 
$z \notin \mathcal{S}$ with
$\{z\}^\UP \subset\{x\}^\UP$, as we noted above. 
Now $z \notin \mathcal{S}$ yields 
$(\{z\}^\DOWN,\{z\}^\UP)=(\emptyset,\{z\}^\UP) <
(\{x\},\{x\}^\UP)$. This contradicts with the assumption
that $(A,B) = (\{x\},\{x\}^\UP)$ is an atom of $\mathrm{DM(RS)}$.
Thus, we get $C=\{x\}^\UP$, proving that 
$\{x\}^\UP$ is an atom of $\wp(U)^\UP$.
\end{proof}

A lattice $L$ with $0$ is called \emph{atomic}, if for any 
$x \neq 0$ there exists an atom $a \leq x$ \cite[p.~113]{Davey02}. Because there
can be reflexive relations $R$ on $U$ such that the set of 
completely join-irreducible elements of $\wp(U)^\UP$ is empty, 
the set of the atoms of $\mathrm{DM(RS)}$ can also be empty
in view of Proposition~\ref{prop:atoms}. So,
$\mathrm{DM(RS)}$ is not necessarily atomic.

\section{On complete distributivity}
\label{Sec:CompleteDistributivitity}

The idea behind complete distributivity is that it extends the usual distributive property of lattices to arbitrary joins and meets. In a completely distributive lattice, the order of taking joins and meets can be interchanged in a very general way, which is not necessarily true in ordinary distributive lattices.
A complete lattice $L$ is \emph{completely distributive} 
if for any doubly indexed
subset $\{x_{i,\,j}\}_{i \in I, \, j \in J}$ of $L$, we have
\[
\bigwedge_{i \in I} \Big ( \bigvee_{j \in J} x_{i,\,j} \Big ) = 
\bigvee_{ f \colon I \to J} \Big ( \bigwedge_{i \in I} x_{i, \, f(i) } \Big ), \]
that is, any meet of joins may be converted into the join of all
possible elements obtained by taking the meet over $i \in I$ of
elements $x_{i,\,k}$\/, where $k$ depends on $i$
\cite[p.~239]{Davey02}.

Our next proposition shows that the complete distributivity of the completion $\mathrm{DM(RS)}$ is tightly connected with the complete distributivity of the approximation lattices.

\begin{proposition} \label{prop:completely_distributive}
Let $R$ be any binary relation on $U$. The following are equivalent:
\begin{enumerate}[label={\rm (\roman*)}]
    \item $\mathrm{DM(RS)}$ is completely distributive;
    \item Any of $\wp(U)^\UP$, $\wp(U)^\DOWN$,  $\wp(U)^\Up$, $\wp(U)^\Down$ 
    is completely distributive.  
\end{enumerate}
\end{proposition}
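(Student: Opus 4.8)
The key structural fact I would exploit is the chain of isomorphisms \eqref{eq:isomorphisms}, which says that the four approximation lattices $\wp(U)^\UP$, $\wp(U)^\DOWN$, $\wp(U)^\Up$, $\wp(U)^\Down$ are all isomorphic (some via order-reversing maps, but complete distributivity is a self-dual property, so it is preserved by dual isomorphisms as well). Therefore the four conditions lumped together in (ii) are in fact mutually equivalent, and it suffices to prove the equivalence of (i) with, say, the complete distributivity of $\wp(U)^\UP$. I would state this reduction explicitly at the outset.

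**Forward direction (i)$\Rightarrow$(ii).** Assuming $\mathrm{DM(RS)}$ is completely distributive, I want to conclude that $\wp(U)^\UP$ is too. The natural tool is Proposition~\ref{Prop:Completion}: $\mathrm{DM(RS)}$ is a subdirect product of $\wp(U)^\DOWN$ and $\wp(U)^\UP$, with the projection $\pi_2$ onto $\wp(U)^\UP$ being a surjective complete lattice homomorphism (it preserves arbitrary joins and meets, since by Proposition~\ref{prop:uma_operations} these are computed coordinatewise in the second coordinate as plain unions for joins and as $(\bigcap)^{\Down\UP}$ for meets, matching the operations in $\wp(U)^\UP$). Complete distributivity is inherited by surjective complete homomorphic images: if one writes out the defining identity for $\mathrm{DM(RS)}$ and applies $\pi_2$ to both sides, using that $\pi_2$ commutes with all joins and meets and is surjective, one recovers the identity in $\wp(U)^\UP$. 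I would carry out this transfer of the identity as the core computation here.

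**Reverse direction (ii)$\Rightarrow$(i).** Assuming (after the reduction) that $\wp(U)^\UP$ is completely distributive, hence so are all four lattices and in particular $\wp(U)^\DOWN$, I would argue that $\mathrm{DM(RS)}$ is completely distributive. The cleanest route is: the full product $\wp(U)^\DOWN \times \wp(U)^\UP$ is completely distributive, because a product of completely distributive lattices is completely distributive (the defining identity holds coordinatewise). Since $\mathrm{DM(RS)}$ is a \emph{complete sublattice} of this product (Proposition~\ref{Prop:Completion}), and arbitrary joins and meets in $\mathrm{DM(RS)}$ agree with those computed in the product, the completely distributive identity, being an identity in arbitrary joins and meets, descends to the complete sublattice. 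I would verify that the defining law is indeed closed under passage to complete sublattices: both sides of the identity are built solely from the join and meet operations, which coincide on $\mathrm{DM(RS)}$ and on the product by Proposition~\ref{prop:uma_operations}.

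**Main obstacle.** The delicate point is not the algebra but the justification that complete distributivity genuinely passes to complete sublattices and to surjective complete homomorphic images; this is where one must be careful that joins and meets are computed identically in $\mathrm{DM(RS)}$ and in the product (which Proposition~\ref{prop:uma_operations} guarantees) so that an identity valid in the ambient lattice restricts faithfully. A secondary subtlety is handling the order-reversing isomorphisms in \eqref{eq:isomorphisms}: I would remark once that complete distributivity is a self-dual notion (interchanging all joins and meets turns the identity into itself after reindexing the function $f$), so a dual isomorphism transports it just as an ordinary isomorphism does. With that remark in place, the reduction at the start is legitimate and the two implications close the loop.
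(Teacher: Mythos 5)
Your proposal is correct and follows essentially the same route as the paper: the forward direction transfers complete distributivity through a surjective complete-lattice projection (the paper uses $\pi_1$ onto $\wp(U)^\DOWN$, you use $\pi_2$ onto $\wp(U)^\UP$), and the reverse direction combines the isomorphisms \eqref{eq:isomorphisms}, the fact that products of completely distributive lattices are completely distributive, and Proposition~\ref{prop:uma_operations} to descend to the complete sublattice $\mathrm{DM(RS)}$. Your explicit remark that complete distributivity is self-dual (needed because some isomorphisms in \eqref{eq:isomorphisms} are order-reversing) is a point the paper leaves implicit, but it does not change the argument.
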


\begin{proof} (i)$\Rightarrow$(ii): 
Suppose $\mathrm{DM(RS)}$ is completely distributive.
Since $\wp(U)^\DOWN$ is the image of $\mathrm{DM(RS)}$ under the complete lattice-homomorphism $\pi_1$, it is completely distributive.
Because of the isomorphisms \eqref{eq:isomorphisms}, so are $\wp(U)^\UP$, $\wp(U)^\Up$, $\wp(U)^\Down$ 

\medskip\noindent%
(ii)$\Rightarrow$(i): Suppose that (ii) holds. Then, 
$\wp(U)^\DOWN$ and $\wp(U)^\UP$ are completely distributive
because of the isomorphisms \eqref{eq:isomorphisms}.
The direct product of completely
distributive lattices is completely distributive. Therefore,
$\wp(U)^\DOWN \times \wp(U)^\UP$ is completely distributive. 
By Proposition~\ref{prop:uma_operations} $\mathrm{DM(RS)}$ is a complete sublattice of  $\wp(U)^\DOWN \times \wp(U)^\UP$.
Thus, it is completely distributive.
\end{proof}

A complete lattice $L$ is \emph{spatial} if each element
is a join of completely join-irreducible elements, that is,
for all $x \in L$,
\[ x = \bigvee \{ j \in \mathcal{J} \mid j \leq x \}. \]
It is well known that every finite lattice is spatial.
Note that using the terminology of \cite{Davey02}, spatial lattices are complete lattices in which the set $\mathcal{J}$ is join-dense.

\begin{remark}\label{rem:alex}
(a) An \emph{Alexandroff topology} is a topology which is 
closed under arbitrary intersections.
They were first introduced in 1937 by P.~S.~Alexandroff under the name ``discrete spaces'' 
in \cite{Alex37}.

If $\mathcal{T}$
is an Alexandroff topology on $U$, then $(\mathcal{T},\subseteq)$
forms a complete lattice such that
$\bigvee \mathcal{H} = \bigcup \mathcal{H}$ and 
$\bigwedge \mathcal{H} = \bigcap \mathcal{H}$ for
$\mathcal{H} \subseteq \mathcal{T}$. The least and
the greatest elements are $\emptyset$ and $U$. This
lattice is known to be completely distributive.

For any $x \in U$, the set 
\[ N_\mathcal{T}(x) = \bigcap \{X \in \mathcal{T} \mid x \in X\}\] 
is the smallest set in $\mathcal{T}$ containing $x$. The set
$N_\mathcal{T}(x)$ is called the \emph{neighbourhood} of $x$.
The set of the completely join-irreducibles of $\mathcal{T}$
is $\{N_\mathcal{T}(x) \mid x \in U\}$. The lattice
$\mathcal{T}$ is known to be spatial.

(b) Let $R$ be a binary relation on $U$.
Since for any $X \subseteq U$, 
$X^\UP = \bigcup \{ \{x\}^\UP \mid x \in X\}$, 
we have that $\wp(U)^\UP$ is spatial if and only if each $\{x\}^\UP$ is a
join of some completely join-irreducible elements of $\wp(U)^\UP$.
Similarly, $\wp(U)^\Up$ is spatial if and only if each $\{x\}^\Up$ 
is a join of some completely join-irreducible elements of 
$\wp(U)^\Up$.
\end{remark}

\begin{example} \label{Exa:ThreeExamples}
(a) As we show in Example~\ref{exa:no_join_irreducibles},
the set of completely join-irreducibles of $\wp(U)^\Up$
can be empty. In such a case, $\wp(U)^\Up$ cannot be
spatial.

\smallskip\noindent%
(b) It is known that $\wp(U)^\DOWN$, $\wp(U)^\UP$, $\wp(U)^\Down$, $\wp(U)^\Up$ form Alexandroff topologies when $R$ is a quasiorder. In addition, 
$\wp(U)^\DOWN = \wp(U)^\Up$ and $\wp(U)^\Down = \wp(U)^\UP$.
\end{example}

Our next proposition shows that the spatiality of $\mathrm{DM(RS)}$ is equivalent to both $\wp(U)^\UP$ and $\wp(U)^\Up$ being spatial.

\begin{proposition} \label{prop:spatial}
Let $R$ be a reflexive relation on $U$. Then, the following assertions are equivalent:
\begin{enumerate}[label={\rm (\roman*)}]
\item $\mathrm{DM(RS)}$ is spatial;
\item $\wp(U)^\UP$ and $\wp(U)^\Up$ are spatial.
\end{enumerate}
\end{proposition}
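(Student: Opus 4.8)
The plan is to reduce spatiality of each factor to the concrete join-density criterion of Remark~\ref{rem:alex}(b): $\wp(U)^\UP$ (respectively $\wp(U)^\Up$) is spatial precisely when every principal neighbourhood $\{y\}^\UP=\breve{R}(y)$ (respectively $\{x\}^\Up=R(x)$) is a union of completely join-irreducible elements of that lattice. Throughout I would exploit that the projections $\pi_1\colon(A,B)\mapsto A$ and $\pi_2\colon(A,B)\mapsto B$ preserve arbitrary joins (they are complete homomorphisms by \eqref{Eq:Join} and \eqref{Eq:Meet}), together with the explicit list of completely join-irreducibles of $\mathrm{DM(RS)}$ from Theorem~\ref{thm:join_irreducibles}, which come in two shapes: the \emph{first type} $(\{x\}^{\Up\DOWN},\{x\}^{\Up\UP})$ with $\{x\}^\Up\in\mathcal{J}(\wp(U)^\Up)$, and the \emph{second type} $(\emptyset,\{x\}^\UP)$ with $x\notin\mathcal{S}$ and $\{x\}^\UP\in\mathcal{J}(\wp(U)^\UP)$.

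For (ii)$\Rightarrow$(i), I would take an arbitrary $(A,B)\in\mathrm{DM(RS)}$ and expand it via Lemma~\ref{lem:rule} as a join of building blocks of the form $(\{a\}^{\Up\DOWN},\{a\}^{\Up\UP})$ with $a\in A$ and $(\{b\}^\DOWN,\{b\}^\UP)$ with $b\in B^\Down$. Each block is then broken into completely join-irreducibles using the assumed spatiality of the two factors. Writing $R(a)=\bigcup_k\{x_k\}^\Up$ with each $\{x_k\}^\Up\in\mathcal{J}(\wp(U)^\Up)$, a short computation with (GC4) and (GC6) gives $(\{a\}^{\Up\DOWN},\{a\}^{\Up\UP})=\bigvee_k(\{x_k\}^{\Up\DOWN},\{x_k\}^{\Up\UP})$, a join of first-type irreducibles; and for $b\notin\mathcal{S}$, where $\{b\}^\DOWN=\emptyset$, writing $\breve{R}(b)=\bigcup_m\{y_m\}^\UP$ with $\{y_m\}^\UP\in\mathcal{J}(\wp(U)^\UP)$ yields $(\emptyset,\{b\}^\UP)=\bigvee_m(\emptyset,\{y_m\}^\UP)$. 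The one point needing care is that each such $y_m$ really is a non-singleton: since $y_m\in\{y_m\}^\UP\subseteq\breve{R}(b)$ forces $b\in R(y_m)$, a singleton $y_m$ would give $b=y_m\in\mathcal{S}$, contradicting $b\notin\mathcal{S}$; hence the second-type requirement holds. Assembling these shows every $(A,B)$ is a join of completely join-irreducibles, so $\mathrm{DM(RS)}$ is spatial.

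For (i)$\Rightarrow$(ii), spatiality of $\wp(U)^\Up$ is the easy half: I would apply spatiality of $\mathrm{DM(RS)}$ to $(\{x\}^{\Up\DOWN},\{x\}^{\Up\UP})$ and project by $\pi_1$, obtaining $\{x\}^{\Up\DOWN}=\big(\bigcup_i C_i\big)^{\Up\DOWN}$ over the irreducibles $(C_i,D_i)$ lying below it. Applying $^\Up$ and using $\{x\}^{\Up\DOWN\Up}=\{x\}^\Up$, $\big(\bigcup_i C_i\big)^{\Up\DOWN\Up}=\big(\bigcup_i C_i\big)^\Up=\bigcup_i C_i^{\Up}$ from (GC6) and (GC4), this writes $R(x)=\{x\}^\Up$ as the union of the sets $C_i^{\Up}$; first-type irreducibles contribute $C_i^\Up=\{x'\}^\Up\in\mathcal{J}(\wp(U)^\Up)$ while second-type ones contribute $C_i^\Up=\emptyset$, so $R(x)$ is a union of members of $\mathcal{J}(\wp(U)^\Up)$, and Remark~\ref{rem:alex}(b) gives spatiality.

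The delicate half is spatiality of $\wp(U)^\UP$, and this is where I expect the main obstacle. The naive move, decomposing $(\{y\}^\DOWN,\{y\}^\UP)$ and projecting by $\pi_2$, fails, because first-type irreducibles below it contribute $D$-components $\{x'\}^{\Up\UP}$, which are not in general completely join-irreducible in $\wp(U)^\UP$. The fix is to decompose a cleverly chosen element whose first coordinate is empty. For $y\notin\mathcal{S}$ one checks $(\emptyset,\{y\}^\UP)\in\mathrm{DM(RS)}$, the compatibility $\emptyset\cap\mathcal{S}=\{y\}^\UP\cap\mathcal{S}$ holding exactly because $\{y\}^\UP\cap\mathcal{S}\neq\emptyset$ would force $y\in\mathcal{S}$. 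Applying spatiality and $\pi_1$ forces $\bigcup_i C_i=\emptyset$, so every irreducible below it has empty first coordinate; but first-type irreducibles satisfy $x'\in\{x'\}^{\Up\DOWN}$ and hence have nonempty first coordinate, so only second-type irreducibles $(\emptyset,\{x'\}^\UP)$ can occur. Projecting by $\pi_2$ then writes $\{y\}^\UP$ as a union of elements of $\mathcal{J}(\wp(U)^\UP)$. The remaining case $y\in\mathcal{S}$ is immediate, since then $\{y\}^\UP$ is itself completely join-irreducible by Lemma~\ref{lem:singleton_join_irr}(ii). Together with Remark~\ref{rem:alex}(b), this yields spatiality of $\wp(U)^\UP$ and completes the equivalence.
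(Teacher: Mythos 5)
Your proposal is correct and follows essentially the same route as the paper's own proof: both directions rest on the same ingredients (Lemma~\ref{lem:rule} to reduce (ii)$\Rightarrow$(i) to the two kinds of blocks, the classification in Theorem~\ref{thm:join_irreducibles}, Lemma~\ref{lem:singleton_join_irr}, and the observation that a completely join-irreducible element with empty first coordinate must be of the second type), and your choices of elements to decompose in (i)$\Rightarrow$(ii), namely $(\{x\}^{\Up\DOWN},\{x\}^{\Up\UP})$ for $\wp(U)^\Up$ and $(\emptyset,\{y\}^\UP)$ for $\wp(U)^\UP$, are exactly the paper's. The one case you leave untreated is the block $(\{b\}^\DOWN,\{b\}^\UP)$ with $b\in B^\Down\cap\mathcal{S}$ in the (ii)$\Rightarrow$(i) direction; this is harmless, since then $R(b)=\{b\}$ gives $(\{b\}^\DOWN,\{b\}^\UP)=(\{b\}^{\Up\DOWN},\{b\}^{\Up\UP})$, so the block is absorbed into your first-type case, which is precisely how the paper disposes of it.
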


\begin{proof}
(i)$\Rightarrow$(ii). Assume that $\mathrm{DM(RS)}$ is spatial.
To prove that $\wp(U)^\UP$ is spacial it is enough to
show that for any $y\in U$, $\{y\}^\UP$ is a union of
some completely join-irreducible elements of $\wp(U)^\UP$, as noted in
Remark~\ref{rem:alex}.

In view of Lemma~\ref{lem:singleton_join_irr}(ii), this obviously holds for any $y\in \mathcal{S}$.  Let $y \notin \mathcal{S}$.
Since now $(\emptyset,\{y\}^\UP)=(\{y\}^\DOWN,\{y\}^\UP)
\in \mathrm{RS} \subseteq \mathrm{DM(RS)}$, (i) means that 
$(\emptyset,\{y\}^\UP)$ is the join of some  
$(A_i,B_i)_{i \in I}\subseteq \mathcal{J}(\mathrm{DM(RS)})$,
that is,
\begin{equation} \label{eq:join_nonsingleton}
(\emptyset,\{y\}^\UP)= \Big ( \big (\bigcup
\{A_i\}_{i \in I}\big)  ^{\Up\DOWN},
\bigcup \{B_i\}_{i \in I}\Big )  
\end{equation}
Then $A_i\subseteq {A_i}^{\Up\DOWN} \subseteq 
( \bigcup \{A_i\}_{i \in I})^{\Up\DOWN}=\emptyset$ yields
$A_i=\emptyset$ for all $i\in I$.
In view of Theorem~\ref{thm:join_irreducibles}, every
completely join-irreducible element $(A_i,B_i)=(\emptyset,B_i)$
of $\mathrm{DM(RS)}$ equals $(\emptyset,\{z_i\}^\UP)$, where 
$z_i \notin \mathcal{S}$ and $\{z_i\}^\UP$ is completely join-irreducible in $\wp(U)^\UP$. Then, \eqref{eq:join_nonsingleton}
implies $\{y\}^\UP = \bigcup_{i \in I} \{z_i\}^\UP$. 
This proves that $\wp(U)^\UP$ is spatial.

In order to prove that $\wp(U)^\Up$ is spatial, it is enough to show 
that any $\{x\}^\Up$ is a union of some completely join-irreducible elements of $\wp(U)^\Up$. 
Clearly, $(\{x\}^{\Up\DOWN},\{x\}^{\Up\UP}) \in
\mathrm{RS} \subseteq \mathrm{DM(RS)}$. Hence, there are sets
$Y,Z\subseteq U$ such that
\begin{equation}\label{eq:join_second}
(\{x\}^{\Up\DOWN},\{x\}^{\Up\UP}) =
\big( \bigvee 
\{(\{y\}^{\Up\DOWN},\{y\}^{\Up\UP}) \mid y\in Y\}\big)  
\vee 
\big (\bigvee \{(\emptyset,\{z\}^\UP) \mid z\in Z\}\big )
\end{equation}
with $\{y\}^\Up \in \mathcal{J}(\wp(U)^\Up)$ for any $y\in Y$,
$\{z\}^\UP \in \mathcal{J}(\wp(U)^\UP)$, and 
$z\notin \mathcal{S}$ for
each $z\in Z$. Then, 
$\{x\}^{\Up\DOWN} =\left( \bigcup
\{\{y\}^{\Up\DOWN} \mid y\in U\}\right)^{\Up\DOWN}$ implies
\begin{gather*}
\{x\}^{\Up} = \{x\}^{\Up\DOWN\Up} =
\big ( \bigcup \{\{y\}^{\Up\DOWN} \mid y\in U\} \big )^{\Up\DOWN\Up} = 
\big ( \bigcup \{\{y\}^{\Up\DOWN} \mid y\in U\} \big )^{\Up} = \\
\bigcup \{\{y\}^{\Up\DOWN\Up} \mid y\in U\} =
\bigcup \{\{y\}^{\Up} \mid y\in U\}.
\end{gather*}
This means that also $\wp(U)^\Up$ is spatial.

(ii)$\Rightarrow$(i). Assume that (ii) holds. Then, in view of 
Lemma~\ref{lem:rule}, to prove that $\mathrm{DM(RS)}$ is spatial, it is enough to show that the following elements are the joins of 
some completely join-irreducible elements of $\mathrm{DM(RS)}$:
\begin{enumerate}[label={\rm (\alph*)}]
\item elements $(\{a\}^{\Up\DOWN},\{a\}^{\Up\UP})$ for any $a\in U$;

\item elements $(\{b\}^{\DOWN},\{b\}^{\UP})$
for any $b\in U$.
\end{enumerate}

Case (a): Since $\wp(U)^\Up$ is spatial, there
exists a set $X \subseteq U$ such that for all $x \in X$
$\{x\}^\Up \in \mathcal{J}(\wp(U)^\Up)$ and 
$\{a\}^\Up = \bigcup \{\{x\}^\Up \mid x \in X\}$. 
Now $(\{x\}^{\Up\DOWN},\{x\}^{\Down\UP})$ belongs to 
$\mathcal{J}(\mathrm{DM(RS)})$ for each $x \in X$. Furthermore,
\[ \{a\}^{\Up\UP}=\bigcup \{\{x\}^{\Up\UP}\mid x \in X\},\] 
and 
\[\{a\}^{\Up \DOWN} = (\bigcup
\{\{x\}^\Up \mid x \in X \})^{\DOWN} =
(\bigcup \{\{x\}^{\Up\DOWN\Up} \mid x \in X\} )^\DOWN =
( \bigcup \{\{x\}^{\Up\DOWN}\mid x\in X\})^{\Up\DOWN}.\]
Hence, we obtain:
\begin{align*}
(\{a\}^{\Up\DOWN},\{a\}^{\Up\UP})& = 
(( \bigcup
\{\{x\}^{\Up\DOWN}\mid x \in X\} )^{\Up\DOWN},
\bigcup
\{\{x\}^{\Up\UP} \mid x \in X\}) \\
&=\bigvee
\{ (\{x\}^{\Up\DOWN},\{x\}^{\Up\DOWN}) \mid x \in X\}.
\end{align*}

Case (b): If $b\in \mathcal{S}$, then $\{b\}^\Up=R(b)=\{b\}$. Hence,
$(\{b\}^\DOWN,\{b\}^\UP) =(\{b\}^{\Up\DOWN},\{b\}^{\Up\UP})$ and this subcase is covered by (a). Suppose that $b\notin \mathcal{S}$. Then,
$(\{b\}^\DOWN,\{b\}^\UP)=(\emptyset,\{b\}^\UP)$. In view of our assumption, $\{b\}^\UP$ is a union of some completely join-irreducible elements of 
$\wp(U)^\UP$, that is,
$\{b\}^\UP = \bigcup \{\{y\}^\UP \mid y\in Y\}$ for some $Y \subseteq U$.
Observe that any $y \notin \mathcal{S}$. Indeed,
for each $y\in Y$, $y\in\{y\}^\UP \subseteq \{b\}^\UP$. In view of 
Lemma~\ref{lem:singleton_join_irr}(i), $y\in \mathcal{S}$ would imply $b=y \in \mathcal{S}$, a contradiction to our hypothesis. Thus, 
$(\emptyset,\{y\}^\UP)=(\{y\}^\DOWN,\{y\}^\UP)\in 
\mathrm{RS} \subseteq \mathrm{DM(RS)}$ for each $y\in Y$, and
\[ (\{b\}^\DOWN,\{b\}^\UP)=(\emptyset,\{b\}^\UP)=
\bigvee \{(\emptyset,\{y\}^\UP)\mid y\in Y\}.\]
By Theorem~\ref{thm:join_irreducibles}, for each $y \in Y$, the
pair $(\emptyset,\{y\}^\UP)$ is a completely join-irreducible element of 
$\mathrm{DM(RS)}$. So, our proof is completed.
\end{proof}

Note that since the pair $({^\UP},{^\Down})$ forms a
Galois connection on $(\wp(U),\subseteq)$, the
complete lattices $(\wp(U)^\UP,\subseteq)$ and $(\wp(U)^\Down,\subseteq)$
are isomorphic by the map  $X^\UP \mapsto X^{\UP\Down}$. Therefore,
$\wp(U)^\UP$ is spatial if and only if $\wp(U)^\Down$ is spatial.
 Similar observation holds between
$\wp(U)^\Up$ and $(\wp(U)^\DOWN$.

An element $p$ of a complete lattice $L$ is said to be 
\emph{completely join-prime} if for every $X \subseteq L$, $p \leq \bigvee X$ implies $p \leq x$
for some $x \in X$ \cite[p.~242]{Davey02}
Note that in a complete lattice $L$, each completely join-prime element is completely join-irreducible. This
can be seen by assuming that $p$ is completely join-prime.
If $p = \bigvee S$ for some $S \subseteq L$, then
$p \leq \bigvee S$ gives that $p \leq x$ for some $x \in S$.
On the other hand, $p \geq s$ for all $s \in S$. Therefore,
$p = x$ and $x$ is join-irreducible. The converse does not
always hold, as we can see in Example~\ref{exa:completely_join_prime}.

However, if $L$ is a completely distributive lattice,
then the set of completely join-prime and completely join-irreducible elements coincide \cite{Balachandran55}.
Note also that $0$ is not completely join-prime.
We denote by $\mathcal{J}_p(L)$ the set of all completely
join-prime elements of $L$.
G.~N.~Raney \cite[Theorem~2]{Raney52} has stated that a 
complete lattice $L$ is isomorphic to an Alexandroff topology
if and only if for all $x \in L$,
$x = \bigvee \{ p \in \mathcal{J}_p(L) \mid p \leq x\}.$

\begin{example} \label{exa:completely_join_prime}
Let $R$ be a tolerance on $U = \{1,2,3,4\}$
such that $R(1) = \{1,2\}$, $R(2) = \{1,2,3\}$, 
$R(3) = \{2,3,4\}$, and $R(4) = \{3,4\}$. Now
\[ \wp(U)^\UP = \wp(U)^\Up = \{\emptyset, \{1,2\},  \{1,2,3\}, \{2,3,4\},
\{3,4\}, U\}.\]
This forms a nondistributive lattice, because it contains $\mathbf{N_5}$ as
a sublattice. Its completely join-irreducible elements are
$\{1,2\}$, $\{1,2,3\}$,  $\{2,3,4\}$, and $\{3,4\}$.
The completely join-irreducible elements $\{1,2,3\}$ and $\{2,3,4\}$
are not completely join-prime. For instance, 
$\{1,2,3\} \subseteq \{1,2\} \cup \{3,4\}$, but 
$\{1,2,3\} \nsubseteq \{1,2\}$ and $\{1,2,3\} \nsubseteq \{3,4\}$.
\end{example}

Because each completely join-prime element is completely join-irreducible, each completely join-prime element of $\wp(U)^\Up$ 
must be of the form $R(x)$ for some $x \in U$
(cf.~Remark~\ref{rem:join-irreducible_form}).
Therefore, the completely join-prime elements of $\wp(U)^\Up$
are elements $R(x)$ such that for all
$\mathcal{H} \subseteq \wp(U)$, 
$R(x) \subseteq \bigcup \{ Y^\Up \mid Y \in \mathcal{H}\}$
implies that $R(x) \subseteq Y^\Up$ for some 
$Y \in \mathcal{H}$.

In the following lemma, we will characterize the
completely join-prime elements similarly to Lemma~\ref{lem:irreducible}.

\begin{lemma} \label{lem:join_prime_condition}
Let $R$ be a binary relation on $U$. The following are equivalent for
$X \subseteq U$:
\begin{enumerate}[label={\rm (\roman*)}]
    \item $X^\Up$ is completely join-prime in 
    $\wp(U)^\Up$;
    \item $X^\Up = R(x)$ for some $x \in U$ such that
    for any $A \subseteq U$,
    $R(x) \subseteq \bigcup \{ R(a) \mid a \in A\}$ implies 
    $R(x) \subseteq R(b)$ for some  $b \in A$. 
\end{enumerate}
\end{lemma}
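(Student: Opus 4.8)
The plan is to mirror the structure of the proof of Lemma~\ref{lem:irreducible}, adapting each inclusion-equality to the inclusion relations that characterize join-primeness. The key tool is Remark~\ref{rem:join-irreducible_form} together with the observation, already recorded in the paragraph preceding the lemma, that every completely join-prime element of $\wp(U)^\Up$ is completely join-irreducible and hence has the form $R(x)=\{x\}^\Up$ for some $x\in U$. I would also lean on the identity $X^\Up=\bigcup\{\{a\}^\Up\mid a\in X\}$ from the same remark, and on the fact (GC4) that $^\Up$ distributes over unions.

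First I would prove (i)$\Rightarrow$(ii). Suppose $X^\Up$ is completely join-prime in $\wp(U)^\Up$. By the discussion above, $X^\Up=R(x)$ for some $x\in U$, and join-primeness means that $R(x)\subseteq\bigcup\{Y^\Up\mid Y\in\mathcal{H}\}$ implies $R(x)\subseteq Y^\Up$ for some $Y\in\mathcal{H}$. Now take any $A\subseteq U$ with $R(x)\subseteq\bigcup\{R(a)\mid a\in A\}$. Since each $R(a)=\{a\}^\Up$ lies in $\wp(U)^\Up$, I may apply the join-prime property to the family $\mathcal{H}=\{\{a\}\mid a\in A\}$ directly, obtaining $R(x)\subseteq R(b)$ for some $b\in A$. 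This direction is immediate and parallels the corresponding step in Lemma~\ref{lem:irreducible}.

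Next I would prove (ii)$\Rightarrow$(i). Assume $X^\Up=R(x)$ satisfies the condition in (ii), and suppose $R(x)\subseteq\bigcup\{Y^\Up\mid Y\in\mathcal{H}\}$ for some $\mathcal{H}\subseteq\wp(U)$. Using (GC4), the right-hand side equals $\bigl(\bigcup\mathcal{H}\bigr)^\Up$, so the hypothesis becomes $R(x)\subseteq\bigl(\bigcup\mathcal{H}\bigr)^\Up$. Writing $\bigcup\mathcal{H}$ as a union of singletons gives $\bigl(\bigcup\mathcal{H}\bigr)^\Up=\bigcup\{\{a\}^\Up\mid a\in\bigcup\mathcal{H}\}=\bigcup\{R(a)\mid a\in\bigcup\mathcal{H}\}$. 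Applying (ii) with $A=\bigcup\mathcal{H}$ yields $R(x)\subseteq R(a)$ for some $a\in\bigcup\mathcal{H}$; then $a$ lies in some $Y\in\mathcal{H}$, whence $R(x)\subseteq R(a)=\{a\}^\Up\subseteq Y^\Up$. This is precisely the join-prime condition, so $X^\Up=R(x)$ is completely join-prime.

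I do not expect any serious obstacle here: the proof is a faithful transcription of Lemma~\ref{lem:irreducible} with every equality $=$ relaxed to $\subseteq$. The only point requiring a little care is the very first reduction in (i)$\Rightarrow$(ii)—namely, justifying that a set $X$ whose upper approximation is join-prime may be taken to have $X^\Up=R(x)$ for a single $x$. This is not automatic from join-primeness alone but follows because join-prime implies join-irreducible, and Remark~\ref{rem:join-irreducible_form} then forces the form $\{x\}^\Up=R(x)$. Once that reduction is in place, the two directions are routine and symmetric to the earlier lemma.
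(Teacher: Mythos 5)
Your proposal is correct and follows essentially the same route as the paper's own proof: both directions reduce to the paragraph preceding the lemma (join-prime implies join-irreducible, hence the form $R(x)$), apply join-primeness to the family of singleton upper approximations for (i)$\Rightarrow$(ii), and use (GC4) together with $X^\Up=\bigcup\{\{a\}^\Up\mid a\in X\}$ for (ii)$\Rightarrow$(i). No gaps; this is the paper's argument with every equality of Lemma~\ref{lem:irreducible} relaxed to an inclusion, exactly as the paper itself does.
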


\begin{proof}
(i)$\Rightarrow$(ii):
Suppose that $X^\Up$ is completely join-prime in
$\wp(U)^\Up$. Then, $X^\Up = R(x)$
for some $x \in U$ such that
for all $\mathcal{H} \subseteq \wp(U)$, 
$R(x) \subseteq \bigcup \{ Y^\Up \mid Y \in \mathcal{H} \}$ implies 
that $R(x) \subseteq Y^\Up$ for some $Y \in \mathcal{H}$.
Let $R(x) \subseteq \bigcup \{R(a) \mid a \in A\}$ for some
$A \subseteq U$. Because each $R(a) = \{a\}^\Up$ belongs to
$\wp(U)^\Up$, we have $R(x) \subseteq R(b)$ for some $b \in A$.

\smallskip\noindent%
(ii)$\Rightarrow$(i):
Let $X^\Up = R(x)$ be such that (ii) holds.
Assume that $R(x) \subseteq \bigcup \{ Y^\Up \mid Y \in \mathcal{H} \}$
for some $\mathcal{H} \subseteq \wp(U)$. 
As shown in the proof of Lemma~\ref{lem:irreducible},
we have 
$\bigcup \{Y^\Up \mid Y \in \mathcal{H}\} = \big (\bigcup \mathcal{H} )^\Up$. Thus, our
assumption is equivalent to 
$R(x) \subseteq \big ( \bigcup \mathcal{H} \big) ^\Up$.

As we noted in the proof of Lemma~\ref{lem:irreducible},
each $X \subseteq U$ satisfies $X^\Up =  \bigcup \{ \{a\}^\Up \mid a \in X\}$. 
By this,
$R(x) \subseteq \bigcup \{ R(a) \mid a \in \bigcup \mathcal{H}\}$.
Because (ii) holds, we have that $R(x) \subseteq R(a)$ for some $a \in \bigcup \mathcal{H}$. 
This means that there is $Y \in \mathcal{H}$ such that $a \in Y$. Hence,
$R(x) \subseteq R(a) = \{a\}^\Up \subseteq Y^\Up$
and so $X^\Up = R(x)$ is completely join-prime in $\wp(U)^\Up$. 
\end{proof}

Note that completely join-prime elements have a property similar to prime numbers in arithmetic. 
If a prime number divides a product, it must divide one of the factors. 
Similarly in case of rough approximations: if a completely join-prime element $R(x)$ is 
included to the union of some $R(a)$’s, it must be included in one of them.

The following corollary is obvious by Lemma~\ref{lem:join_prime_condition}.

\begin{corollary}\label{cor:join_prime_condition2}
Let $R$ be a binary relation on $U$. 
The following are equivalent for $X \subseteq U$:
\begin{enumerate}[label={\rm (\roman*)}]
    \item $X^\UP$ is completely join-prime in $\wp(U)^\UP$;
    \item $X^\UP = \breve{R}(x)$ for some $x \in U$ such that
    for any $A \subseteq U$,
    $\breve{R}(x) \subseteq \bigcup \{ \breve{R}(a) \mid a \in A\}$ implies 
    $\breve{R}(x) \subseteq \breve{R}(b)$ for some  $b \in A$. 
\end{enumerate}
\end{corollary}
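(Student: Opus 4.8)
The plan is to obtain this corollary from Lemma~\ref{lem:join_prime_condition} purely by the duality between a relation and its inverse, with no new computation. The first step is to note that Lemma~\ref{lem:join_prime_condition} is asserted for an \emph{arbitrary} binary relation; in particular it holds when $\breve{R}$ is taken in place of $R$. The whole argument then amounts to tracking how the operators occurring in the lemma transform under this substitution.

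When $\breve{R}$ plays the role of $R$, the neighbourhood written as $R(x)$ in the lemma becomes $\breve{R}(x)$, and the lattice $\wp(U)^\Up$ must be recomputed with $\breve{R}$. Here the key identity is $\breve{(\breve{R})} = R$: the operator $X \mapsto X^\Up$ is by definition $X \mapsto \{x \in U \mid \breve{R}(x) \cap X \neq \emptyset\}$, so evaluating it for the relation $\breve{R}$ yields $\{x \in U \mid R(x) \cap X \neq \emptyset\} = X^\UP$, the upper approximation for the original $R$. Hence $\wp(U)^\Up$ computed from $\breve{R}$ coincides with $\wp(U)^\UP$ computed from $R$, and correspondingly $\{x\}^\Up = \breve{R}(x)$ plays the role of the neighbourhood $R(x)$.

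Feeding these identifications into Lemma~\ref{lem:join_prime_condition} converts its condition~(i), namely ``$X^\Up$ is completely join-prime in $\wp(U)^\Up$'', into ``$X^\UP$ is completely join-prime in $\wp(U)^\UP$'', and its condition~(ii) into the statement phrased through $\breve{R}(x)$ that appears in the corollary; thus the equivalence transfers verbatim. There is essentially no obstacle: the only point requiring verification is the operator correspondence under $R \mapsto \breve{R}$, and this is immediate from $\breve{(\breve{R})} = R$ together with the defining formulas for the approximation operators. This is precisely the mechanism by which Corollary~\ref{cor:irreducible} was derived from Lemma~\ref{lem:irreducible}, and no separate argument is needed.
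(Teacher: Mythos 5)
Your proposal is correct and matches the paper's approach: the paper simply declares the corollary ``obvious by Lemma~\ref{lem:join_prime_condition}'', meaning exactly the substitution $R \mapsto \breve{R}$ you carry out, justified by $\breve{(\breve{R})} = R$ so that the $^\Up$ operator for $\breve{R}$ is the $^\UP$ operator for $R$. Your write-up just makes explicit the operator bookkeeping that the paper leaves implicit.
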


Our following lemma uses some existing results from the
literature to characterize when $\wp(U)^\Up$ is 
spatial and completely distributive.

\begin{lemma} \label{lem:compl_distri_join_prime}
Let $R$ be a binary relation on $U$. Then $\wp(U)^\Up$ is spatial and completely distributive if and only if any $R(x)$ is a union 
of such sets $R(y)$ that are completely join-prime in $\wp(U)^\Up$.
\end{lemma}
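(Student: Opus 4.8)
The plan is to derive both directions from two results already recalled in the excerpt: Raney's theorem, stating that a complete lattice $L$ is isomorphic to an Alexandroff topology exactly when $x = \bigvee \{ p \in \mathcal{J}_p(L) \mid p \leq x\}$ for all $x \in L$, and the fact that in a completely distributive lattice the completely join-prime and completely join-irreducible elements coincide. Throughout I would use that joins in $\wp(U)^\Up$ are unions, since $\bigcup_i X_i^\Up = (\bigcup_i X_i)^\Up$ by (GC4), together with Remark~\ref{rem:join-irreducible_form}, which forces every completely join-irreducible (hence every completely join-prime) element of $\wp(U)^\Up$ to be of the form $R(y)$.

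For the ``if'' direction, suppose every $R(x)$ is a union of completely join-prime elements of $\wp(U)^\Up$. Since each element of $\wp(U)^\Up$ can be written as $X^\Up = \bigcup \{ R(x) \mid x \in X\}$, it is then a union of completely join-prime elements, and because joins are unions, every element equals the join of the completely join-prime elements below it. Raney's theorem then gives that $\wp(U)^\Up$ is isomorphic to an Alexandroff topology, which by Remark~\ref{rem:alex} is completely distributive and spatial; transporting these properties across the isomorphism yields that $\wp(U)^\Up$ is itself spatial and completely distributive. (Spatiality also follows directly, since every completely join-prime element is completely join-irreducible.)

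For the ``only if'' direction, assume $\wp(U)^\Up$ is spatial and completely distributive. Spatiality gives $R(x) = \{x\}^\Up = \bigvee \{ j \in \mathcal{J} \mid j \leq \{x\}^\Up \}$, a join --- hence a union --- of completely join-irreducible elements. By Balachandran's theorem, complete distributivity makes each such completely join-irreducible element completely join-prime, and by Remark~\ref{rem:join-irreducible_form} each is of the form $R(y)$. Thus $R(x)$ is a union of completely join-prime sets $R(y)$, as required.

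The routine verifications (that unions of $\Up$-sets are again $\Up$-sets, and that the relevant lattice properties survive the Alexandroff isomorphism) are immediate, so the only real content is the correct invocation of Raney's and Balachandran's theorems. The main obstacle is simply recognizing that ``every element is a join of completely join-prime elements below it'' is precisely Raney's hypothesis, which hinges on the fact that in $\wp(U)^\Up$ joins coincide with set unions; once this is observed, both implications fall out quickly.
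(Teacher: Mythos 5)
Your proposal is correct and follows essentially the same route as the paper's own proof: the ``if'' direction via Raney's theorem (every element of $\wp(U)^\Up$ being a union of completely join-prime sets $R(y)$, hence $\wp(U)^\Up$ is isomorphic to an Alexandroff topology, hence completely distributive and spatial), and the ``only if'' direction via spatiality plus the coincidence of completely join-irreducible and completely join-prime elements in completely distributive lattices. Your extra remark that a union of some join-primes equals the join of all join-primes below the element is a slightly more careful phrasing of the same step the paper takes implicitly.
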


\begin{proof}
Assume that any $R(x)$ is a union of such sets $R(y)$ that are
completely join-prime. Let $X \subseteq U$.
Because $X^\Up = \bigcup \{ R(x) \mid x \in X\}$, we have that $X^\Up$
is a union of such sets $R(x)$ sets that are completely join-prime elements. 
By  \cite[Theorem 2]{Raney52} this is equivalent to the fact that 
$\wp(U)^\Up$ is isomorphic to an Alexandroff topology. 
So, $\wp(U)^\Up$ is completely distributive and spatial.

\smallskip

On the other hand, let $\wp(U)^\Up$ be completely distributive and spatial.
Because $\wp(U)^\Up$ is spatial, each element of 
$R(x) = \{x\}^\Up$ can be written as a union of completely join-irreducible elements. As we have noted, in $\wp(U)^\Up$, 
the completely join-irreducibles are  of the form $R(y)$ for some 
$y\in U$. This implies that
\[
 R(x)  =  \bigcup \{ R(y) \mid \text{ $R(y) \subseteq R(x)$ and $R(y)$ is completely join-irreducible in $\wp(U)^\Up$} \}. 
\]
Because  $\wp(U)^\Up$ is completely distributive, completely 
join-prime and completely join-irreducible elements coincide.
Therefore,
\[
 R(x)  =  \bigcup \{ R(y) \mid \text{ $R(y) \subseteq R(x)$ and $R(y)$ is completely join-prime in $\wp(U)^\Up$} \}. \qedhere
\] 
\end{proof}

We end this section by introducing the notion of the
core of $R(x)$. We begin by presenting the following
lemma.

\begin{lemma} \label{lem:CharJoinPrime}
Let $R$ be a binary relation on $U$ and $x\in U$. The following are equivalent:
\begin{enumerate}[label={\rm (\roman*)}]
\item $R(x)$ is completely join-prime in $\wp(U)^\Up$;

\item $R(x) \nsubseteq \bigcup \{R(y) \mid R(x)\nsubseteq R(y)\}$;

\item There exists $w\in R(x)$ such that $w \in R(y)$
implies $R(x)\subseteq R(y)$.
\end{enumerate}
\end{lemma}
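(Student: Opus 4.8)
The plan is to establish the cycle of implications (i)$\Rightarrow$(ii)$\Rightarrow$(iii)$\Rightarrow$(i), leaning throughout on the reformulation of complete join-primality provided by Lemma~\ref{lem:join_prime_condition}. The conceptual content is slight: conditions (ii) and (iii) will turn out to be literal contrapositives of one another, and the witness produced in (iii) is exactly the element that drives the join-prime test.

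For (i)$\Rightarrow$(ii) I would argue by contradiction. Assuming $R(x)$ is completely join-prime but that (ii) fails, we have $R(x) \subseteq \bigcup \{R(y) \mid R(x) \nsubseteq R(y)\}$. Setting $A = \{y \in U \mid R(x) \nsubseteq R(y)\}$, this is precisely the hypothesis appearing in Lemma~\ref{lem:join_prime_condition}(ii), so complete join-primality forces $R(x) \subseteq R(b)$ for some $b \in A$. But $b \in A$ means $R(x) \nsubseteq R(b)$, a contradiction; hence (ii) holds.

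The implication (ii)$\Rightarrow$(iii) is a purely logical reformulation. Unwinding the failure of the inclusion in (ii), there is an element $w \in R(x)$ lying outside $\bigcup \{R(y) \mid R(x) \nsubseteq R(y)\}$, that is, $w \notin R(y)$ for every $y$ with $R(x) \nsubseteq R(y)$. Contraposing, for every $y \in U$ the membership $w \in R(y)$ implies $R(x) \subseteq R(y)$, which is exactly (iii). Reading the same equivalence backwards gives (iii)$\Rightarrow$(ii), so these two conditions are in fact interchangeable, and one could equally well record (ii)$\Leftrightarrow$(iii) as a standalone step.

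Finally, for (iii)$\Rightarrow$(i) I would verify the criterion of Lemma~\ref{lem:join_prime_condition}(ii) directly. Taking the witness $w$ from (iii), suppose $R(x) \subseteq \bigcup \{R(a) \mid a \in A\}$ for some $A \subseteq U$. Since $w \in R(x)$, we have $w \in R(a)$ for some $a \in A$, and then (iii) yields $R(x) \subseteq R(a)$ with $a \in A$; by Lemma~\ref{lem:join_prime_condition} this makes $R(x)$ completely join-prime in $\wp(U)^\Up$. As every step is either an invocation of Lemma~\ref{lem:join_prime_condition} or a plain contrapositive, I do not anticipate a genuine obstacle; the only point demanding care is the bookkeeping in (ii)$\Rightarrow$(iii), where one must negate the inclusion correctly and track which quantifier ranges over which $y$, so that $w$ is recognized as precisely the element required by (iii).
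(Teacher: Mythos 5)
Your proposal is correct and follows essentially the same route as the paper: the same cycle (i)$\Rightarrow$(ii)$\Rightarrow$(iii)$\Rightarrow$(i), with (i)$\Rightarrow$(ii) by contradiction via the join-prime criterion, (ii)$\Rightarrow$(iii) as a contrapositive unwinding of the failed inclusion, and (iii)$\Rightarrow$(i) by verifying the condition of Lemma~\ref{lem:join_prime_condition}(ii) using the witness $w$.
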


\begin{proof}
(i)$\Rightarrow$(ii): 
Suppose $R(x) \subseteq \bigcup \{R(y) \mid R(x)\nsubseteq R(y)\}$.
Because $R(x)$ is completely join-prime in $\wp(U)^\Up$, we 
have that $R(x)\subseteq R(y)$ for some $y\in U$ with 
$R(x) \nsubseteq R(y)$, a contradiction. Thus, (ii) holds.

\smallskip%
(ii)$\Rightarrow$(iii): Suppose that  (ii) holds. There is $w\in R(x)$ such that 
$w \notin \bigcup \{R(y) \mid R(x)\nsubseteq R(y)\}$.
Therefore, for all $y \in U$, $R(x)\nsubseteq R(y)$ implies $w \notin R(y)$. 
Thus, $w \in R(y)$ implies $R(x)\subseteq R(y)$, that is, (iii) holds.

\smallskip%

(iii)$\Rightarrow $(i). 
Suppose that $R(x) = \{x\}^\Up \subseteq X^\Up$ for some $X \subseteq U$.
Assume that (iii) holds. Then, there exists $w \in R(x)$ such that
$R(x) \subseteq R(y)$ for all $R(y)$ containing $w$. Because
$X^\Up = \bigcup \{ \{x\}^\Up \mid x \in X\}$ and
$w \in R(x) \subseteq X^\Up$, there is $y \in X$ such that 
$w \in R(y)$. Because $R(y)$ contains $w$, $R(x) \subseteq R(y)$.
Hence, $R(x)$ is completely join-prime in $\wp(U)^\Up$ by
Lemma~\ref{lem:join_prime_condition}.
\end{proof}

Let $R$ be a binary relation on $U$ and $x \in U$. 
The \emph{core of $R(x)$}
is defined by 
\begin{equation} \label{eq:core}
\mathfrak{core}R(x) :=  \{ w \in R(x) \mid 
w \in R(y) \text{ implies } R(x) \subseteq R(y)\}.
\end{equation}
We define the \emph{core of $\breve{R}(x)$} by 
writing  $\breve{R}$ instead of $R$ in \eqref{eq:core}.
Our next lemma is clear from Lemma~\ref{lem:CharJoinPrime}. It shows the direct connection between completely join-prime elements and the neighbourhoods 
whose core is nonempty.

\begin{lemma} \label{lem:core_irreducibles}
Let $R$ be a binary relation on $U$.
\begin{enumerate}[label={\rm (\roman*)}]
\item $\mathcal{J}_p(\wp(U)^\Up) = 
\{ R(x) \mid \mathfrak{core}R(x) \neq \emptyset\}$;
\item $\mathcal{J}_p(\wp(U)^\UP) = 
\{ \breve{R}(x) \mid \mathfrak{core}\breve{R}(x) \neq \emptyset\}$.
\end{enumerate}
\end{lemma}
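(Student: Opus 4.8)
The plan is to read both statements directly off the characterization in Lemma~\ref{lem:CharJoinPrime}, once the definition of the core is matched against its condition~(iii); no genuinely new argument should be required.

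First I would assemble the two ingredients that make part~(i) immediate. The first is that every completely join-prime element of $\wp(U)^\Up$ is of the form $R(x)$ for some $x \in U$: each such element is completely join-irreducible, and by Remark~\ref{rem:join-irreducible_form} every completely join-irreducible element of $\wp(U)^\Up$ has the shape $\{x\}^\Up = R(x)$. Hence $\mathcal{J}_p(\wp(U)^\Up)$ is precisely the collection of those neighbourhoods $R(x)$ that are completely join-prime. The second ingredient is the observation that condition~(iii) of Lemma~\ref{lem:CharJoinPrime} --- ``there exists $w \in R(x)$ such that $w \in R(y)$ implies $R(x) \subseteq R(y)$'' --- says exactly that $\mathfrak{core}R(x) \neq \emptyset$, since the defining membership condition for $w \in \mathfrak{core}R(x)$ in \eqref{eq:core} is verbatim ``$w \in R(x)$, and $w \in R(y)$ implies $R(x) \subseteq R(y)$''. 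Combining these, $R(x) \in \mathcal{J}_p(\wp(U)^\Up)$ iff $R(x)$ is completely join-prime iff, by the equivalence (i)$\Leftrightarrow$(iii) of Lemma~\ref{lem:CharJoinPrime}, $\mathfrak{core}R(x) \neq \emptyset$. This yields part~(i).

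For part~(ii), the plan is to apply part~(i) to the inverse relation $\breve{R}$ rather than to reprove anything. I would note that the $^\UP$-operation of $R$ coincides with the $^\Up$-operation computed from $\breve{R}$: since $X^\UP = \bigcup_{x \in X}\breve{R}(x)$ and the $^\Up$-neighbourhoods of $\breve{R}$ are exactly the sets $\breve{R}(x)$, we have that $\wp(U)^\UP$ for $R$ equals $\wp(U)^\Up$ for $\breve{R}$. Moreover the core of $\breve{R}(x)$, defined by substituting $\breve{R}$ for $R$ in \eqref{eq:core}, is precisely the core of the $^\Up$-neighbourhood $\breve{R}(x)$ of the relation $\breve{R}$. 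Thus part~(i) applied to $\breve{R}$ gives $\mathcal{J}_p(\wp(U)^\UP) = \{\breve{R}(x) \mid \mathfrak{core}\breve{R}(x) \neq \emptyset\}$, which is part~(ii); this mirrors the passage from Lemma~\ref{lem:join_prime_condition} to Corollary~\ref{cor:join_prime_condition2}.

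The main obstacle, such as it is, is purely bookkeeping rather than mathematical: one must check that the definition of $\mathfrak{core}R(x)$ in \eqref{eq:core} lines up word for word with condition~(iii) of Lemma~\ref{lem:CharJoinPrime}, and that the duality switch to $\breve{R}$ correctly identifies $\wp(U)^\UP$ with $\wp(U)^\Up$ of the inverse relation, together with the matching cores. All the substantive content has already been secured in Lemma~\ref{lem:CharJoinPrime}.
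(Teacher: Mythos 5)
Your proposal is correct and matches the paper's approach exactly: the paper derives this lemma directly from Lemma~\ref{lem:CharJoinPrime}, whose condition~(iii) is verbatim the statement $\mathfrak{core}R(x) \neq \emptyset$, with part~(ii) obtained by the same duality switch to $\breve{R}$ used for Corollary~\ref{cor:join_prime_condition2}. Your write-up simply makes explicit the bookkeeping that the paper leaves as ``clear.''
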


\begin{example} \label{exa:compl_distr}
The reflexive relation $R$ on $U = \{1, 2, 3, 4\}$, visualized in Figure~\ref{fig:directed_similarity}, is a directional similarity relation.
\begin{figure}[h]
\includegraphics[width=20mm]{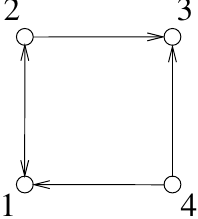}
\caption{The directional similarity relation $R$.}
\label{fig:directed_similarity}
\end{figure}
If an element $a$ is $R$-similar to $b$, then there is an
arrow from $a$ to $b$. Because $R$ is reflexive, there should be an arrow from each circle to the circle itself. Such loops can be omitted.

Now, element $1$ is similar to $2$, and $2$ is similar to $1$ and
$3$. Element $3$ is not similar to other elements, so it
is a singleton. Element $4$ is similar to $1$ and $4$. The
$R$-neighbourhoods are the following sets:
\begin{gather*}
R(1) = \{1,2\}, \ R(2) = \{1,2,3\}, \ R(3) = \{3\}, \ 
R(4) = \{1,3,4\}.
\end{gather*}
The relation $R$ is reflexive. It is not symmetric, 
because $(2,3) \in R$, but $(3,2) \notin R$. The relation is
not transitive, because $(1,2)$ and $(2,3)$ belong to $R$,
but $(1,3)$ is not in $R$.

The lattices $\wp(U)^\Up$ and $\wp(U)^\UP$ are depicted in Figure~\ref{fig2:upperwhite}. They are (completely) distributive.
The elements $R(1)$, $R(3)$, and $R(4)$ are completely 
join-irreducible
and completely join-prime in $\wp(U)^\Up$. 
Similarly, $\breve{R}(2)$, $\breve{R}(3)$, and 
$\breve{R}(4)$ are  completely  join-irreducible
and completely join-prime in $\wp(U)^\UP$. 

\begin{figure}[h]
\includegraphics[width=120mm]{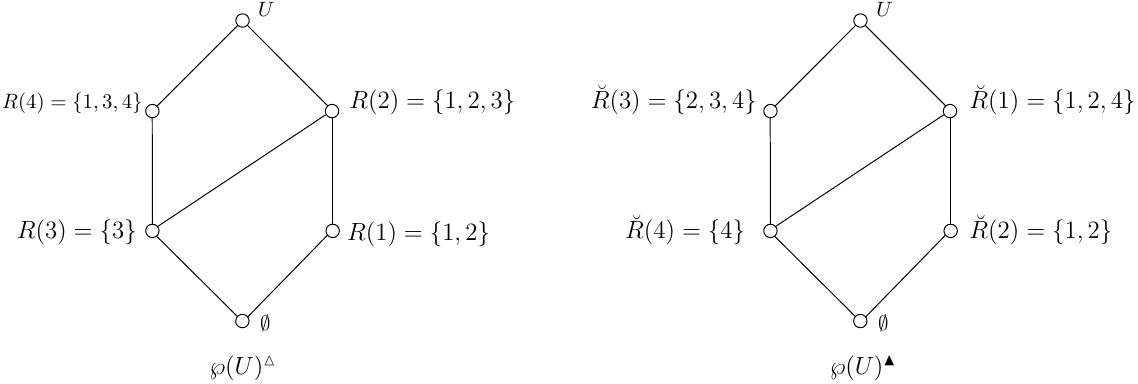}
\caption{The distributive lattices
$\wp(U)^\Up$ and $\wp(U)^\UP$ of Example~\ref{exa:compl_distr}.}
\label{fig2:upperwhite}
\end{figure}

\noindent%
We have now that
\[ 
\mathfrak{core}R(1) = \{2\}, \
\mathfrak{core}R(2) = \emptyset, \
\mathfrak{core}R(3) = \{3\}, \
\mathfrak{core}R(4) = \{4\};
\]
and
\[
\mathfrak{core}\breve{R}(1) = \emptyset, \
\mathfrak{core}\breve{R}(2) = \{1\}, \
\mathfrak{core}\breve{R}(3) = \{3\}, \
\mathfrak{core}\breve{R}(4) = \{4\}.
\]
Note that $1 \notin \mathfrak{core}R(1)$. This is because 
$1 \in R(4)$, but $R(1) \nsubseteq R(4)$. This means that
there are such elements $x$ that $x \notin \mathfrak{core}R(x)$ even
$\mathfrak{core}R(x) \neq \emptyset$ and 
$\wp(U)^\Up$ is completely distributive and spatial.
Similarly, $2 \notin \mathfrak{core}\breve{R}(2)$
concerning the inverse of $R$.
\end{example}

The following result characterizes the case when $\mathrm{DM(RS)}$ is completely distributive and spatial in terms of cores.
The result is obvious by Propositions
\ref{prop:completely_distributive} and
\ref{prop:spatial}, and 
Lemmas \ref{lem:compl_distri_join_prime}
and \ref{lem:core_irreducibles}.

\begin{corollary}\label{cor:spatial_reflexive}
Let $R$ be a binary relation on $U$. 
$\mathrm{DM(RS)}$ is spatial and completely distributive
if and only if the following equations hold for each $x \in U$:
\begin{enumerate}[label={\rm (\roman*)}]
\item 
$R(x) = \bigcup \{ R(y) \mid
R(y) \subseteq R(x) \text{ and } 
\mathfrak{core}R(y) \neq \emptyset \}$;

\item 
$\breve{R}(x) = \bigcup \{ \breve{R}(y) \mid
\breve{R}(y) \subseteq \breve{R}(x) \text{ and } 
\mathfrak{core}\breve{R}(y) \neq \emptyset \}$.
\end{enumerate}
\end{corollary}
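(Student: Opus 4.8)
The plan is to chain together the four cited results, treating complete distributivity and spatiality of $\mathrm{DM(RS)}$ separately and then transferring each to the two approximation lattices $\wp(U)^\Up$ and $\wp(U)^\UP$. First I would use Proposition~\ref{prop:completely_distributive} to observe that $\mathrm{DM(RS)}$ is completely distributive exactly when every one of $\wp(U)^\UP$, $\wp(U)^\DOWN$, $\wp(U)^\Up$, $\wp(U)^\Down$ is; in particular this is equivalent to both $\wp(U)^\Up$ and $\wp(U)^\UP$ being completely distributive. Next, by Proposition~\ref{prop:spatial} (which is where reflexivity of $R$ enters, via the hypothesis under which that proposition was proved), $\mathrm{DM(RS)}$ is spatial if and only if both $\wp(U)^\Up$ and $\wp(U)^\UP$ are spatial. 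Combining the two equivalences gives the key reduction: $\mathrm{DM(RS)}$ is spatial and completely distributive if and only if each of $\wp(U)^\Up$ and $\wp(U)^\UP$ is \emph{simultaneously} spatial and completely distributive.

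With this reduction in hand I would treat the two lattices symmetrically. For $\wp(U)^\Up$, Lemma~\ref{lem:compl_distri_join_prime} says that spatiality together with complete distributivity is equivalent to every $R(x)$ being a union of completely join-prime members $R(y)$ of $\wp(U)^\Up$, and Lemma~\ref{lem:core_irreducibles}(i) identifies these completely join-prime members as precisely the neighbourhoods $R(y)$ with $\mathfrak{core}R(y) \neq \emptyset$; this turns the condition into condition~(i). For $\wp(U)^\UP$ I would apply the very same Lemma~\ref{lem:compl_distri_join_prime} to the inverse relation $\breve R$: since replacing $R$ by $\breve R$ interchanges $\wp(U)^\Up$ with $\wp(U)^\UP$ and the neighbourhoods $R(\cdot)$ with $\breve R(\cdot)$, the lemma then characterizes spatiality and complete distributivity of $\wp(U)^\UP$ as every $\breve R(x)$ being a union of completely join-prime sets $\breve R(y)$, and Lemma~\ref{lem:core_irreducibles}(ii) rewrites these as the $\breve R(y)$ with $\mathfrak{core}\breve R(y) \neq \emptyset$, yielding condition~(ii).

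The only step that is not a bare citation, and hence the place to be careful, is the bookkeeping that converts the phrase ``$R(x)$ is a union of completely join-prime neighbourhoods'' into the explicitly indexed condition~(i), whose union ranges over the $R(y)$ satisfying both $R(y) \subseteq R(x)$ and $\mathfrak{core}R(y) \neq \emptyset$. Here I would argue both inclusions: if $R(x) = \bigcup_k R(y_k)$ with each $R(y_k)$ completely join-prime, then each $R(y_k) \subseteq R(x)$, so the family $\{R(y_k)\}$ is contained in the larger indexing family of condition~(i), whose union is therefore sandwiched between $R(x)$ and $R(x)$; conversely, if condition~(i) holds then $R(x)$ is visibly such a union, so the hypothesis of Lemma~\ref{lem:compl_distri_join_prime} is met. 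The identical remark handles condition~(ii). Since every remaining link in the chain is a direct application of a previously established statement, assembling these equivalences completes the proof, and I would expect no substantive obstacle beyond keeping the two dual halves and the containment restriction in the union cleanly separated.
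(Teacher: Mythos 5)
Your proposal is correct and follows exactly the paper's own route: the paper declares the corollary ``obvious'' from Propositions~\ref{prop:completely_distributive} and~\ref{prop:spatial} together with Lemmas~\ref{lem:compl_distri_join_prime} and~\ref{lem:core_irreducibles}, which is precisely the chain you assemble. Your only additions---applying Lemma~\ref{lem:compl_distri_join_prime} to $\breve{R}$ for the dual half, and the sandwich argument converting ``union of completely join-prime neighbourhoods'' into the restricted union of condition~(i)---are just the bookkeeping the paper leaves implicit.
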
 
 
Note also that spatial and completely distributive lattices are
isomorphic to some Alexandroff topologies; see 
\cite[Theorem~10.29]{Davey02}.

In the following lemma, we present some useful properties of
the cores. Similar facts can be proved also for $\mathfrak{core}\breve{R}(x)$
by interchanging $R(x)$ and $\breve{R}(x)$. Note that the inverse of
$\breve{R}$ is $R$ again. 

\begin{lemma}\label{lem:core}
Let $R$ be a binary relation on $U$.
\begin{enumerate}[label={\rm (\roman*)}]
\item $R(x)=R(y)$ implies $\mathfrak{core}R(x)=
\mathfrak{core}R(y)$.

\item $w\in \mathfrak{core}R(x)$ if and only if 
$w\in R(x)$ and $\breve{R}(w)\subseteq \breve{R}(z)$ 
for all $z\in R(x)$.

\item If $y_{1},y_{2}\in \mathfrak{core}R(x)$, then 
$\breve{R}(y_{1}) = \breve{R}(y_{2})$.

\item If $y \in \mathfrak{core}R(x)$, then $x\in \mathfrak{core}\breve{R}(y)$.

\item If $R$ is reflexive, then $y \in \mathfrak{core}R(x)$ 
implies $R(x) \subseteq R(y)$. 
\end{enumerate}
\end{lemma}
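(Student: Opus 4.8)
The plan is to derive all five statements directly from the definition \eqref{eq:core} of the core, using repeatedly the elementary equivalence $y \in \breve{R}(w) \iff w \in R(y)$, which merely restates membership in an inverse neighbourhood. Part~(i) is immediate, since the defining condition of $\mathfrak{core}R(x)$ mentions $R(x)$ only through the set $R(x)$ itself; replacing $R(x)$ by the equal set $R(y)$ leaves the condition unchanged, so the two cores coincide.

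For part~(ii) I would unfold $w \in \mathfrak{core}R(x)$ as the conjunction of $w \in R(x)$ with the clause ``$w \in R(y) \Rightarrow R(x) \subseteq R(y)$ for every $y \in U$''. Writing $R(x) \subseteq R(y)$ as ``$z \in R(y)$ for all $z \in R(x)$'' and interchanging the two universal quantifiers (over $y$ and over $z \in R(x)$), the clause becomes ``for every $z \in R(x)$ and every $y$, $w \in R(y) \Rightarrow z \in R(y)$''. Translating the two memberships via the equivalence above, the inner implication over all $y$ is precisely $\breve{R}(w) \subseteq \breve{R}(z)$, which gives the stated characterization. The one place demanding attention is this quantifier swap, but it is harmless as both quantifiers are universal; the rest of the lemma rests on this reformulation.

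Parts~(iii) and~(iv) then follow cleanly from~(ii). For~(iii), if $y_1, y_2 \in \mathfrak{core}R(x)$, then applying~(ii) to $y_1$ with $z = y_2 \in R(x)$ yields $\breve{R}(y_1) \subseteq \breve{R}(y_2)$, and the symmetric choice gives the reverse inclusion, whence $\breve{R}(y_1) = \breve{R}(y_2)$. For~(iv), suppose $y \in \mathfrak{core}R(x)$; since $y \in R(x)$ translates to $x \in \breve{R}(y)$, it remains only to check the core condition for $\breve{R}(y)$, namely that $x \in \breve{R}(z)$ implies $\breve{R}(y) \subseteq \breve{R}(z)$. But $x \in \breve{R}(z)$ means $z \in R(x)$, and for such $z$ part~(ii) already supplies $\breve{R}(y) \subseteq \breve{R}(z)$, which is exactly what the definition of $\mathfrak{core}\breve{R}(y)$ requires.

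Finally, part~(v) uses reflexivity in a single step: if $R$ is reflexive and $y \in \mathfrak{core}R(x)$, then $y \in R(y)$, so instantiating the defining implication ``$y \in R(z) \Rightarrow R(x) \subseteq R(z)$'' at $z = y$ immediately yields $R(x) \subseteq R(y)$. I do not expect any genuine difficulty in this lemma; the only care needed is the bookkeeping of the inverse-neighbourhood correspondence and the quantifier interchange in~(ii), upon which parts~(iii) and~(iv) depend.
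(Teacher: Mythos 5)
Your proposal is correct and takes essentially the same approach as the paper: part (ii) is verified directly from the definition via the correspondence $w \in R(y) \iff y \in \breve{R}(w)$ --- the paper's two element-chasing directions are precisely your quantifier interchange unfolded --- and parts (i), (iii), (iv), (v) are then derived from (ii) and the definition exactly as in the paper's proof. The only difference worth noting is that in (iv) you explicitly verify the membership requirement $x \in \breve{R}(y)$, which the paper leaves implicit.
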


\begin{proof}
(i) If $R(x)=R(y)$, then by the definition of the core, 
$w\in \mathfrak{core}R(x)$ implies $w\in \mathfrak{core}R(y)$.
Conversely, $w\in \mathfrak{core}R(y)$ implies $w\in \mathfrak{core}R(x)$. Hence, $\mathfrak{core}R(x)=\mathfrak{core}R(y)$.

(ii) If $w\in \mathfrak{core}R(x)$, then $w\in R(x)$ and $R(x)\subseteq R(y)$ for all $y$ with $w\in R(y)$. 
Let $z\in R(x)$. If $y\in \breve{R}(w)$, then 
$w\in R(y)$ and $z\in R(x)\subseteq R(y)$, that is, 
$y\in \breve{R}(z)$. This means that 
$\breve{R}(w)\subseteq \breve{R}(z)$ for any $z\in R(x)$.

Conversely, assume that $w\in R(x)$ and 
$\breve{R}(w)\subseteq \breve{R}(z)$ for all $z\in R(x)$. 
Let $w\in R(y)$ for some $y\in U$. Then, 
$y\in \breve{R}(w)\subseteq \breve{R}(z)$ for all $z\in R(x)$. 
Hence, $z\in R(y)$ for all $z\in R(x)$, that is, 
$R(x)\subseteq R(y)$. This means that 
$w\in \mathfrak{core}R(x)$.

(iii) Since $y_{1},y_{2}\in \mathfrak{core} R(x) \subseteq R(x)$, 
we have $\breve{R}(y_{1})\subseteq \breve{R}(y_{2})$ and 
$\breve{R}(y_{2}) \subseteq \breve{R}(y_{1})$
by (ii).
Hence, $\breve{R}(y_{1})=\breve{R}(y_{2})$.

(iv) Assume that $y \in \mathfrak{core} R(x)$.
To prove that $x \in \mathfrak{core} \breve{R}(y)$, we need to
show that $\breve{R}(y) \subseteq \breve{R}(z)$ for all
$\breve{R}(z)$ such that $x \in \breve{R}(z)$.
For this, let $z$ be such that $x \in \breve{R}(z)$. 
Now $y \in \mathfrak{core} R(x)$ implies
$\breve{R}(y) \subseteq \breve{R}(z)$ by (ii) since
$z \in R(x)$. 

(v) Let $R$ be reflexive.
If $y \in \mathfrak{core} R(x)$, then $y \in R(y)$ implies 
$R(x) \subseteq R(y)$ by the definition of the core.
\end{proof}

We will conclude  this section by showing that 
the ordered sets of the completely join-prime elements of $\wp(U)^\Up$ and $\wp(U)^\UP$ are dually order-isomorphic. To achieve this, let us define a pair of mappings between  
$\mathcal{J}_{p}(\wp(U)^\Up)$ and
$\mathcal{J}_{p}(\wp(U)^\UP)$ by setting:
\[ \varphi \colon \mathcal{J}_{p}(\wp(U)^\Up)
\to
\mathcal{J}_{p}(\wp(U)^\UP), \
R(x) \mapsto \breve{R}(w_x), 
\text{where $w_x \in \mathfrak{core}R(x)$}
\]
and 
\[ \psi \colon \mathcal{J}_{p}(\wp(U)^\UP)
\to
\mathcal{J}_{p}(\wp(U)^\Up), \
\breve{R}(y) \mapsto R(w_y), 
\text{where $w_y \in \mathfrak{core}\breve{R}(y)$}.
\]

\begin{lemma} \label{lem:order_reversing}
Let $R$ be a binary relation on $U$.
\begin{enumerate}[label={\rm (\roman*)}]
\item The maps $\varphi$ and $\phi$ are well-defined.
\item The maps $\varphi$ and $\psi$ are $\subseteq$-reversing.
\item The maps $\varphi$ and $\psi$ are inverses of each other.
\end{enumerate}
\end{lemma}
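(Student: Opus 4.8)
The plan is to reduce all three assertions to the structural properties of cores collected in Lemma~\ref{lem:core}, using Lemma~\ref{lem:core_irreducibles} only to guarantee that the relevant cores are nonempty. By the remark preceding the statement, every claim about $\psi$ follows from the corresponding claim about $\varphi$ by interchanging $R$ and $\breve{R}$ (recalling that the inverse of $\breve{R}$ is $R$), so I would carry out the arguments for $\varphi$ and invoke this symmetry for $\psi$.

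First I would establish (i). Since $R(x) \in \mathcal{J}_{p}(\wp(U)^\Up)$, Lemma~\ref{lem:core_irreducibles}(i) gives $\mathfrak{core}R(x) \neq \emptyset$, so some $w_x$ exists. The definition of $\varphi$ carries a \emph{twofold} ambiguity that must be resolved: the argument $R(x)$ may be presented by several points $x$, and for fixed $x$ the core element $w_x$ may be chosen in several ways. For the latter, Lemma~\ref{lem:core}(iii) shows that any $y_1,y_2 \in \mathfrak{core}R(x)$ satisfy $\breve{R}(y_1)=\breve{R}(y_2)$, so $\breve{R}(w_x)$ is independent of the choice of $w_x$. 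For the former, Lemma~\ref{lem:core}(i) gives $\mathfrak{core}R(x)=\mathfrak{core}R(x')$ whenever $R(x)=R(x')$, so the pool of admissible core elements depends only on the set $R(x)$. Finally, to see that the value lands in the codomain, I would apply Lemma~\ref{lem:core}(iv): from $w_x \in \mathfrak{core}R(x)$ one gets $x \in \mathfrak{core}\breve{R}(w_x)$, whence $\mathfrak{core}\breve{R}(w_x)\neq\emptyset$, and Lemma~\ref{lem:core_irreducibles}(ii) places $\breve{R}(w_x)$ in $\mathcal{J}_{p}(\wp(U)^\UP)$.

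For (ii), suppose $R(x)\subseteq R(x')$ with both join-prime, and fix core elements $w_x, w_{x'}$. Since $w_x \in R(x) \subseteq R(x')$, applying the characterization of Lemma~\ref{lem:core}(ii) to $w_{x'}\in\mathfrak{core}R(x')$ with $z=w_x$ yields $\breve{R}(w_{x'})\subseteq \breve{R}(w_x)$, i.e.\ $\varphi(R(x'))\subseteq\varphi(R(x))$; thus $\varphi$ is $\subseteq$-reversing. For (iii), I would compute $\psi(\varphi(R(x)))$: writing $\varphi(R(x))=\breve{R}(w_x)$ with $w_x\in\mathfrak{core}R(x)$, Lemma~\ref{lem:core}(iv) gives $x\in\mathfrak{core}\breve{R}(w_x)$, so $x$ is an admissible choice when forming $\psi(\breve{R}(w_x))$; by the well-definedness from (i) the result is independent of that choice, hence $\psi(\varphi(R(x)))=R(x)$. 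The reverse composite is identical after swapping $R$ and $\breve{R}$, so $\varphi$ and $\psi$ are mutually inverse bijections.

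The main obstacle will be (i): the twofold dependence on choices forces one to combine three separate parts of Lemma~\ref{lem:core} merely to make the definitions legitimate, and these same identities then carry the entire weight of (iii). Once well-definedness is secured, (ii) and (iii) are short consequences of Lemma~\ref{lem:core}(ii) and (iv). I would also note that reflexivity of $R$ is never used here; only the general core identities \mbox{(i)--(iv)} are needed.
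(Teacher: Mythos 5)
Your proposal is correct and takes essentially the same route as the paper's own proof: well-definedness via Lemma~\ref{lem:core}(iii) and (iv) together with Lemma~\ref{lem:core_irreducibles}, order-reversal from the core property, and mutual inversion from Lemma~\ref{lem:core}(iv) plus the uniqueness established in part (i). If anything, you are slightly more careful than the paper, which only addresses the choice of the core element $w_x$ (not the ambiguity of the representing point $x$ when $R(x)=R(x')$, which you settle with Lemma~\ref{lem:core}(i)) and proves part (ii) by a direct element chase where you invoke the characterization in Lemma~\ref{lem:core}(ii).
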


\begin{proof} (i)
If $R(x)\in\mathcal{J}_{p}(\wp(U)^\Up)$, 
$\mathfrak{core}R(x)$
has at least one element $w_x\in\mathfrak{core}R(x)$. 
By Lemma~\ref{lem:core}(iv), 
$w_x\in\mathfrak{core}R(x)$ implies 
$x\in\mathfrak{core}\breve{R}(w_x)$. Thus, 
$\mathfrak{core}\check{R}(w_x)\neq\emptyset$. Hence,
$\varphi(R(x))$ exists and belongs to
$\mathcal{J}_{p}(\wp(U)^\UP)$.
For any $w_{1},w_{2}\in\mathfrak{core}R(x)$, we have
$\check{R}(w_{1})=\check{R}(w_{2})$
by Lemma~\ref{lem:core}(iii). 
Thus, the element $\varphi(R(x))$ is unique, and
$\varphi$ is well defined.

That $\psi$ is well-defined map can be proved analogously. 
 
(ii)
Take any $R(x_1),R(x_2)\in\mathcal{J}_{p}(\wp(U)^{\Up})$ such
that $R(x_1)\subseteq R(x_2)$. We  prove that
$\varphi(R(x_1)) \supseteq \varphi (R(x_2))$. 
By definition, $\varphi(R(x_1)) = \breve{R}(w_1)$, where $w_1\in\mathfrak{core}R(x_1)$ and 
$\varphi(R(x_2)) = \breve{R}(w_2)$, where
$w_2\in\mathfrak{core}R(x_2)$.

Let $y\in\breve{R}(w_2)$. Then $w_2 \in R(y)$
and $w_2\in\mathfrak{core}R(x_2)$ imply 
$R(x_2)\subseteq R(y)$. 
This means that 
$w_1 \in \mathfrak{core}R(x_1) \subseteq
R(x_1) \subseteq R(x_2) \subseteq R(y)$. 
This yields $y\in\breve{R}(w_1)$. We have proved
$\breve{R}(w_2) \subseteq \breve{R}(x_2)$, that is, 
$\varphi(R(x_1))\supseteq \varphi(R(x_1))$.

That $\phi$ is order-reversing can be proved in
a similar manner.

(iii) ) Assume $R(x)\in\mathcal{J}_{p}(\wp(U)^\Up$. Then,
$\mathfrak{core}R(x)\neq\emptyset$ and 
$\varphi(R(x)) = \breve{R}(w_x)$ for some
$w_x\in\mathfrak{core}R(x)$. 
We have that $x\in\mathfrak{core}\breve{R}(w_x)$ and so
$\psi(\breve{R}(w_x))=R(x)$. Therefore,
$\psi(\varphi(R(x)))=R(x)$.

Analogously, we can show that 
$\varphi(\psi(\breve{R}(y)))=\breve{R}(y)$ for 
each $\breve{R}(y) \in \mathcal{J}_p(\wp(U)^\UP)$.
Therefore, we have proved that 
$\varphi$ and $\psi$ are inverses of each other
\end{proof}

Now we are ready to present the isomorphism.

\begin{proposition}
If $R$ is a binary relation on $U$, then
\[ 
(\mathcal{J}_{p}(\wp(U)^\Up),\subseteq) \cong
(\mathcal{J}_{p}(\wp(U)^\UP), \supseteq).
\]
\end{proposition}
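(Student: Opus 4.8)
The plan is to show that the map $\varphi$ introduced just before Lemma~\ref{lem:order_reversing} is the required dual order-isomorphism, so that essentially all of the work has already been carried out in that lemma. First I would recall that, by Lemma~\ref{lem:order_reversing}(iii), the maps $\varphi$ and $\psi$ are mutually inverse; in particular $\varphi \colon \mathcal{J}_p(\wp(U)^\Up) \to \mathcal{J}_p(\wp(U)^\UP)$ is a bijection with inverse $\psi$.

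It then remains to check that $\varphi$ both preserves and reflects the relevant orders, that is, that for all $R(x_1), R(x_2) \in \mathcal{J}_p(\wp(U)^\Up)$ we have $R(x_1) \subseteq R(x_2)$ if and only if $\varphi(R(x_1)) \supseteq \varphi(R(x_2))$. The forward implication is exactly the statement that $\varphi$ is $\subseteq$-reversing, established in Lemma~\ref{lem:order_reversing}(ii). For the reverse implication I would apply the same part of the lemma to $\psi$: assuming $\varphi(R(x_1)) \supseteq \varphi(R(x_2))$, the fact that $\psi$ is $\subseteq$-reversing yields $\psi(\varphi(R(x_1))) \subseteq \psi(\varphi(R(x_2)))$, and since $\psi \circ \varphi$ is the identity by part (iii), this says precisely $R(x_1) \subseteq R(x_2)$.

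Combining these observations, $\varphi$ is an order-isomorphism from $(\mathcal{J}_p(\wp(U)^\Up), \subseteq)$ onto $(\mathcal{J}_p(\wp(U)^\UP), \supseteq)$, which is the claimed isomorphism. Since Lemma~\ref{lem:order_reversing} already carries the genuine content, namely the well-definedness of $\varphi$ and $\psi$ (resting on parts (iii) and (iv) of Lemma~\ref{lem:core}), their order-reversing behaviour, and the fact that they are mutual inverses, there is no real obstacle remaining here. The only point requiring a moment of care is the observation that an order-reversing bijection whose inverse is also order-reversing is automatically order-reflecting, which is exactly what licenses passing from the two $\subseteq$-reversing statements to the required ``if and only if''.
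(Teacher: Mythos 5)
Your proof is correct and follows essentially the same route as the paper: both derive the result directly from Lemma~\ref{lem:order_reversing}, using that $\varphi$ and $\psi$ are mutually inverse $\subseteq$-reversing bijections to obtain the dual order-isomorphism. Your only addition is spelling out explicitly why an order-reversing bijection with order-reversing inverse is order-reflecting, a step the paper leaves implicit.
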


\begin{proof}
The result follows from Lemma~\ref{lem:order_reversing}.
Since $\varphi$ and $\psi$ are inverses of each other,
they are bijections. 
Because $\varphi$ is a bijective order-preserving map from 
$(\mathcal{J}_{p}(\wp(U)^\Up),\subseteq)$ to
$(\mathcal{J}_{p}(\wp(U)^\UP), \supseteq)$
with  an order-preserving inverse $\psi$, it
is an order isomorphism. The same holds for $\psi$.
\end{proof}

\section{Nelson algebras defined on $\mathrm{DM(RS)}$}
\label{Sec:NelsonAlgebras}

As we mentioned in the introduction, $\mathrm{RS}$ forms a completely distributive and spatial Nelson algebra when induced
by a quasiorder.
Conversely, for each completely distributive and spatial Nelson algebra, there exists a quasiorder on $\mathcal{J}$ such that the induced rough set algebra is isomorphic to the original Nelson algebra. Therefore, Nelson algebras appear to be closely connected to quasiorders. In this section, we reveal a new condition under which certain relations that are merely reflexive induce Nelson algebras.

A \emph{De~Morgan algebra} $(L,\vee,\wedge,{\sim},0,1)$ is
an algebra such that $(L,\vee,\wedge,0,1)$  
is a bounded distributive lattice and the negation $\sim$ satisfies the \emph{double negation law}
\[ {\sim} {\sim} x = x,\] 
and the two \emph{De Morgan laws} 
\[ {\sim} (x \vee y) = {\sim} x \wedge {\sim} y  
\text{ \ and \ } 
{\sim} (x \wedge y) = {\sim} x \vee {\sim} y .  \]
Note that this means that $\sim$ is an order-isomorphism
between $(L,\leq)$ and $(L,\geq)$.

We say that a De Morgan algebra is \emph{completely distributive} if its underlying lattice is completely distributive. 
Let $(L,\vee,\wedge,{\sim},0,1)$ be a completely 
distributive De Morgan algebra. We define for any $j \in \mathcal{J}$ the element
\begin{equation}\label{Eq:Gee}
 g(j)= \bigwedge \{x \in L \mid x\nleq {\sim} j \}.
\end{equation}
This $g(j) \in \mathcal{J}$ is the least element which is not below ${\sim}j$. The function $g \colon \mathcal{J} \to \mathcal{J}$
satisfies the conditions:
\begin{enumerate}[label = ({J}\arabic*)]
 \item if $x \leq y$, then $g(x) \geq g(y)$;
 \item $g(g(x))= x$.
\end{enumerate}
In fact, $(\mathcal{J},\leq)$ is self-dual by the map $g$.
For studies  on the properties of the map $g$, see  
\cite{Cign86, JR11, Mont63a},
for example.

\medskip%

A \emph{Kleene algebra} is a De Morgan algebra 
satisfying inequality
\begin{equation} \label{Eq:Kleene}\tag{K}
x \wedge {\sim} x \leq y \vee {\sim} y.
\end{equation}
Let $(L,\vee,\wedge,{\sim},0,1)$ be a completely distributive 
Kleene algebra. 
Then, $j$ and $g(j)$ are comparable for any $j \in \mathcal{J}$, that is,
\begin{enumerate}[label = ({J}\arabic*)]
\addtocounter{enumi}{2}
 \item $g(j)\leq j \text{ or } j \leq g(j)$.
\end{enumerate}
We may define three disjoint sets:
\begin{align*}
    \mathcal{J}^- &= \{j \in \mathcal{J} \mid j < g(j) \}; \\
    \mathcal{J}^\circ &= \{j \in \mathcal{J} \mid j = g(j) \}; \\
    \mathcal{J}^+ &= \{j \in \mathcal{J} \mid j > g(j) \}.
\end{align*}
The following simple lemma gives some useful
conditions for $\mathcal{J}^-$ and $\mathcal{J}^+$.

\begin{lemma}\label{lem:KleeneNeg}
Let $(L,\vee,\wedge,{\sim},0,1)$  be a completely distributive
Kleene algebra.
\begin{enumerate}[label={\rm (\roman*)}]
\item $j\in \mathcal{J}^{-} \iff  g(j)\in \mathcal{J}^{+}$;  
\item $\mathcal{J}^{-} = \{ j \in \mathcal{J} \mid 
j \leq {\sim} j\}$.
\end{enumerate}
\end{lemma}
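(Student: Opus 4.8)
The plan is to reduce both parts to the structural facts recorded just after \eqref{Eq:Gee}: that $g$ is a map $\mathcal{J} \to \mathcal{J}$ which is order-reversing and involutive (properties (J1) and (J2)), that $j$ and $g(j)$ are always comparable by the Kleene law (property (J3)), and---most importantly---that $g(j)$ is literally the \emph{least} element of the set $S_j := \{x \in L \mid x \nleq {\sim}j\}$, so that $g(j) \nleq {\sim}j$ holds for every $j \in \mathcal{J}$.

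For part (i) I would argue purely formally from the involution. Since $g$ sends $\mathcal{J}$ to $\mathcal{J}$ and $g(g(j)) = j$ by (J2), the inequality $j < g(j)$ that defines membership in $\mathcal{J}^-$ rewrites as $g(g(j)) < g(j)$, i.e.\ $g(j) > g(g(j))$, which is exactly the condition defining $g(j) \in \mathcal{J}^+$. Hence $j \in \mathcal{J}^- \iff g(j) \in \mathcal{J}^+$, with no further computation.

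For part (ii) I would prove the two inclusions separately. For the inclusion $\subseteq$, I would take $j \in \mathcal{J}^-$, so $j < g(j)$, and suppose toward a contradiction that $j \nleq {\sim}j$; then $j \in S_j$, whence $g(j) = \bigwedge S_j \leq j$, contradicting $j < g(j)$. This forces $j \leq {\sim}j$, and uses only that $g(j)$ is a lower bound of $S_j$. For the reverse inclusion $\supseteq$, I would start from $j \leq {\sim}j$ and invoke comparability (J3): if $g(j) \leq j$ held, then $g(j) \leq j \leq {\sim}j$ would give $g(j) \leq {\sim}j$, contradicting $g(j) \nleq {\sim}j$, so necessarily $j \leq g(j)$; and $j = g(j)$ is likewise excluded, since it would again yield $g(j) = j \leq {\sim}j$. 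Thus $j < g(j)$, i.e.\ $j \in \mathcal{J}^-$.

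The only genuinely substantive ingredient---and the step I would flag as the main obstacle---is the use of $g(j) \nleq {\sim}j$ in the $\supseteq$ direction. This is the assertion that the meet defining $g(j)$ is attained within $S_j$, which is where complete distributivity and complete join-irreducibility actually enter. Since the text already records that $g(j)$ is ``the least element which is not below ${\sim}j$,'' I would cite this directly rather than re-derive it; everything else reduces to short applications of (J2) and (J3).
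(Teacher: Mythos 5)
Your proof is correct and follows essentially the same route as the paper's: part (i) via the involution (J2), and part (ii) by combining the comparability (J3) with the fact that $g(j)$ is the least element not below ${\sim}j$, so that $j \nleq {\sim}j$ forces $g(j) \leq j$ while $g(j) \leq {\sim}j$ is impossible. Your version merely makes explicit two points the paper leaves implicit (the symmetric direction of (i) and the exclusion of $j = g(j)$ in the $\supseteq$ inclusion), and correctly identifies $g(j) \nleq {\sim}j$ as the one ingredient genuinely depending on complete distributivity.
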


\begin{proof} 
(i) Suppose $j \in \mathcal{J}^-$. Then, $j < g(j)$ and
$g(j) > g(g(j)) = j$, that is, $g(j)\in \mathcal{J^+}$.
The other direction may be proved similarly.

(ii) If $j \in \mathcal{J}$ is such that 
$j \nleq {\sim} j$, then $g(j) \leq j$, because $g(j)$
is the least element which is not below ${\sim}j$. 
If $j \in \mathcal{J}^-$, then $j < g(j)$.
Therefore, $g(j) \nleq j$ and we must have $j \leq {\sim}j$.
Thus, $\mathcal{J}^{-} \subseteq \{ j \in \mathcal{J} \mid 
j \leq {\sim} j\}$;

On the other, if $j \leq {\sim}j$, then $g(j) \leq j$ is not possible. Therefore, $g(j) > j$ and $j \in \mathcal{J}^-$.
\end{proof}

Our following proposition is clear by \cite[Lemma~2.1]{JarRad23}.

\begin{proposition} \label{Prop:Kleene}
If $R$ is a reflexive relation such that $\mathrm{DM(RS)}$ is
a distributive lattice, then
\[ (\mathrm{DM(RS)},\vee,\wedge,{\sim},(\emptyset,\emptyset), (U,U) ) \]
is a Kleene algebra in which ${\sim}(A,B) = (B^c, A^c)$ for all
$(A,B) \in  \mathrm{DM(RS)}$.
\end{proposition}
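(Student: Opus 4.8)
The quickest route is to note that \cite[Lemma~2.1]{JarRad23} already equips $\mathrm{DM(RS)}$ (for any reflexive $R$) with a pseudo-Kleene algebra structure whose involution is ${\sim}(A,B) = (B^c,A^c)$ and whose bounds are $(\emptyset,\emptyset)$ and $(U,U)$; since a pseudo-Kleene algebra is precisely a Kleene algebra as soon as its underlying lattice is distributive, the hypothesis that $\mathrm{DM(RS)}$ is distributive yields the claim at once. For completeness I would also indicate a self-contained verification of the De Morgan and Kleene axioms.

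I would first record the consequence of reflexivity that drives everything: for every $(A,B) \in \mathrm{DM(RS)}$ we have $A \subseteq B$, since $A \subseteq A^\Up \subseteq A^{\Up\UP}$ by (Ref2) and $A^{\Up\UP} \subseteq B$ by Umadevi's description of $\mathrm{DM(RS)}$. The bounds $(\emptyset,\emptyset)$ and $(U,U)$ lie in $\mathrm{RS} \subseteq \mathrm{DM(RS)}$ by (GC1) and (Ref1) and are clearly the least and greatest elements.

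The main point is that $\sim$ maps $\mathrm{DM(RS)}$ into itself. Here $B^c \in \wp(U)^\DOWN$ and $A^c \in \wp(U)^\UP$ follow from the dualities \eqref{eq:dual}, and the singleton condition transfers by complementation, because $A \cap \mathcal{S} = B \cap \mathcal{S}$ gives $A^c \cap \mathcal{S} = B^c \cap \mathcal{S}$. The delicate condition is $(B^c)^{\Up\UP} \subseteq A^c$. I would handle it through the Galois connections $({^\Up},{^\DOWN})$ and $({^\UP},{^\Down})$: chaining them shows $A^{\Up\UP} \subseteq B \iff A^\Up \subseteq B^\Down \iff A \subseteq B^{\Down\DOWN}$, and the same chain applied to $(B^c,A^c)$ gives $(B^c)^{\Up\UP} \subseteq A^c \iff B^c \subseteq A^{c\Down\DOWN}$. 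The dualities \eqref{eq:dual} identify $A^{c\Down\DOWN}$ with $(A^{\Up\UP})^c$, so this last inclusion is simply the complement of $A^{\Up\UP} \subseteq B$ and therefore holds. This is the step I expect to be the genuine obstacle; everything else is formal.

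With well-definedness in hand, the double negation law is immediate from $((A^c)^c,(B^c)^c) = (A,B)$, and $\sim$ is order-reversing since $(A,B) \leq (C,D)$ is equivalent to $C^c \subseteq A^c$ and $D^c \subseteq B^c$. An order-reversing involution is a dual automorphism of the lattice, hence satisfies both De Morgan laws; combined with the assumed distributivity this makes $\mathrm{DM(RS)}$ a De Morgan algebra. Finally, for the Kleene inequality \eqref{Eq:Kleene} I would compute, using \eqref{Eq:Meet}, \eqref{Eq:Join} and the inclusions $A \subseteq B$, $C \subseteq D$, that $(A,B) \wedge {\sim}(A,B)$ has first coordinate $A \cap B^c = \emptyset$ while $(C,D) \vee {\sim}(C,D)$ has second coordinate $D \cup C^c = U$; the coordinatewise comparison then holds trivially. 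Thus \eqref{Eq:Kleene} is automatic and $\mathrm{DM(RS)}$ is a Kleene algebra.
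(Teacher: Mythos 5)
Your proposal is correct and its main route is exactly the paper's: the paper proves this proposition solely by citing \cite[Lemma~2.1]{JarRad23}, which provides the pseudo-Kleene structure that distributivity upgrades to a Kleene algebra. Your supplementary self-contained verification (closure of $\mathrm{DM(RS)}$ under $\sim$ via the Galois-connection/duality chain reducing $(B^c)^{\Up\UP}\subseteq A^c$ to $A^{\Up\UP}\subseteq B$, and the Kleene inequality from $A\cap B^c=\emptyset$ and $D\cup C^c=U$) is also sound, and goes beyond what the paper records.
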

We will next describe the sets $\mathcal{J}^{-}$,
$\mathcal{J}^\circ$, and $\mathcal{J}^+$ in the case
$\mathrm{DM(RS)}$ forms a completely distributive
lattice.

\begin{proposition} \label{prop:partition_of_J}
Let $R$ be a reflexive relation on $U$ such that 
$\mathrm{DM(RS)}$ is completely distributive. Then, the
following assertions hold:
\begin{enumerate}[label={\rm (\roman*)}, itemsep=4pt]
\item 
$\mathcal{J}^{-} = 
\{(\emptyset ,\{x\}^\UP) \mid \{x\}^\UP \in
\mathcal{J}(\wp (U)^{\UP}) \text{ and } x \notin \mathcal{S} \}$.

\item 
If $(\emptyset ,\{x\}^\UP) \in \mathcal{J}^{-}$, then
$g(\emptyset,\{x\}^\UP) =  (\{z\}^{\Up \DOWN}, \{z\}^{\Up \UP})$
for any $z\in \mathfrak{core}\breve{R}(x)$, $z \notin \mathcal{S}$,
and $\{z\}^\Up$ is completely join-irreducible in  
$\wp(U)^\Up$.

\item 
$\mathcal{J}^{+} = \{(\{x\}^{\Up \DOWN},\{x\}^{\Up \UP}) \mid 
\{x\}^\Up \in \mathcal{J}(\wp(U)^{\Up}) \text{ and }
x \notin \mathcal{S} \}$;

\item
$\mathcal{J}^{\circ}$ = $\{(\{x\},\{x\}^\UP)\mid x\in \mathcal{S} \}$.
\end{enumerate}

\bigskip     
\end{proposition}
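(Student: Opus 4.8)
The plan is to work inside the Kleene algebra structure guaranteed by Proposition~\ref{Prop:Kleene}: complete distributivity makes $\mathrm{DM(RS)}$ distributive, so ${\sim}(A,B) = (B^c,A^c)$ and the map $g$ together with the properties (J1)--(J3) and Lemma~\ref{lem:KleeneNeg} are all available. By Theorem~\ref{thm:join_irreducibles} every $j \in \mathcal{J}$ is either of the \emph{upper type} $(\emptyset,\{x\}^\UP)$ (with $\{x\}^\UP$ completely join-irreducible in $\wp(U)^\UP$ and $x \notin \mathcal{S}$) or of the \emph{lower type} $(\{x\}^{\Up\DOWN},\{x\}^{\Up\UP})$ (with $\{x\}^\Up$ completely join-irreducible in $\wp(U)^\Up$). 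I would first settle (i) through the criterion $\mathcal{J}^- = \{ j \in \mathcal{J} \mid j \leq {\sim}j\}$ of Lemma~\ref{lem:KleeneNeg}(ii), then compute $g$ explicitly on the two families; statements (iii) and (iv) fall out from the disjoint partition $\mathcal{J} = \mathcal{J}^- \sqcup \mathcal{J}^\circ \sqcup \mathcal{J}^+$, and (ii) from the computation on the upper type.

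For (i) I would simply test $j \leq {\sim}j$. For an upper-type $j = (\emptyset,\{x\}^\UP)$ one has ${\sim}j = (\{x\}^{\UP c},U)$, so $j \leq {\sim}j$ holds trivially and $j \in \mathcal{J}^-$. For a lower-type $j = (\{x\}^{\Up\DOWN},\{x\}^{\Up\UP})$, reflexivity (Ref2) gives $x \in \{x\}^{\Up\DOWN} \subseteq \{x\}^{\Up\UP}$, so $x \in \{x\}^{\Up\DOWN} \cap \{x\}^{\Up\UP}$ and hence $j \nleq {\sim}j$; thus no lower-type element lies in $\mathcal{J}^-$. This yields exactly (i).

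The heart of the argument is the evaluation of $g(j) = \bigwedge\{(A,B) \in \mathrm{DM(RS)} \mid (A,B) \nleq {\sim}j\}$, where meets are taken via \eqref{Eq:Meet}. The first move is to rewrite the defining condition: for an upper-type $j=(\emptyset,\breve{R}(x))$ it reads $A \cap \breve{R}(x) \neq \emptyset$. I would show $g(j) = (R(z)^\DOWN,R(z)^\UP) = (\{z\}^{\Up\DOWN},\{z\}^{\Up\UP})$ for $z \in \mathfrak{core}\breve{R}(x)$. The core is nonempty because complete distributivity forces completely join-irreducible $=$ completely join-prime (so $\breve{R}(x) \in \mathcal{J}_p(\wp(U)^\UP)$ and Lemma~\ref{lem:core_irreducibles}(ii) applies), and it is independent of the choice of $z$ by the analogue of Lemma~\ref{lem:core}(iii). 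That this candidate lies in the meet set follows since $z \in R(z)^\DOWN \cap \breve{R}(x)$; that it is a lower bound uses the core property $R(z) \subseteq R(v)$ for all $v \in \breve{R}(x)$ (Lemma~\ref{lem:core}(ii) applied to $\breve{R}$) together with the two membership facts, valid for any $A \in \wp(U)^\DOWN$ and any $w \in A$: namely $R(w)^\DOWN \subseteq A$ and $R(w)^\UP \subseteq A^{\Up\UP} \subseteq B$. Being both a member and a lower bound, the candidate is the minimum, hence the meet; this proves (ii) modulo the labels $z \notin \mathcal{S}$ and ``$\{z\}^\Up$ join-irreducible'', which I recover below from (iii).

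It remains to compute $g$ on the lower type. If $x \notin \mathcal{S}$, I pick $w \in \mathfrak{core}R(x)$; Lemma~\ref{lem:core}(v) shows $w \notin \mathcal{S}$, so $(\emptyset,\breve{R}(w)) \in \mathrm{DM(RS)}$, and since $x \in \breve{R}(w) \cap \{x\}^{\Up\DOWN}$ this element lies in the meet set, forcing the first coordinate of $g(j)$ to be empty; hence $g(j) < j$ and $j \in \mathcal{J}^+$. If $x \in \mathcal{S}$, then $j = (\{x\},\{x\}^\UP)$, and I would prove $g(j) = j$ by showing every $(A,B)$ with $(A,B) \nleq {\sim}j$ satisfies $(\{x\},\{x\}^\UP) \leq (A,B)$: the condition gives $A \cap \{x\}^\UP \neq \emptyset$ or $x \in B$, landing one either in $x \in A^\Up \cap \mathcal{S}$ or in $x \in B \cap \mathcal{S} = A \cap \mathcal{S}$, and the singleton identity $s \in A^\Up \iff s \in A$ (for $s \in \mathcal{S}$, $A \in \wp(U)^\DOWN$) upgrades this to $x \in A$, whence $\{x\}^\UP = \{x\}^{\Up\UP} \subseteq A^{\Up\UP} \subseteq B$; as $j$ itself qualifies, $g(j) = j$ and $j \in \mathcal{J}^\circ$. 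Combining these with (i) and the partition of $\mathcal{J}$ yields (iii) and (iv), and feeding (iii) back into the upper-type computation identifies $g(\emptyset,\breve{R}(x))$ as a lower-type element with $z \notin \mathcal{S}$ and $\{z\}^\Up$ join-irreducible, completing (ii). The main obstacle I anticipate is the two lower-bound verifications inside the meet computations: translating $(A,B) \nleq {\sim}j$ into a neighbourhood-meeting condition and then propagating it through the Galois identities and the core inclusions so that the core-based pair sits below every member of the meet set.
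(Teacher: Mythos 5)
Your proof is correct, but it runs the key computations in a genuinely different direction from the paper. The shared skeleton is the same: Theorem~\ref{thm:join_irreducibles} classifies $\mathcal{J}$ into the two types, Lemma~\ref{lem:KleeneNeg}(ii) pins down $\mathcal{J}^-$, and nonemptiness of the relevant cores comes from complete distributivity (completely join-irreducible $=$ completely join-prime, then Lemma~\ref{lem:core_irreducibles}). The divergence is in how $g$ is evaluated. The paper argues top-down: $g(\emptyset,\{x\}^\UP) \in \mathcal{J}^+$ by Lemma~\ref{lem:KleeneNeg}(i), so by the classification it equals $(\{z\}^{\Up\DOWN},\{z\}^{\Up\UP})$ for \emph{some} $z$, and the minimality of $g(j)$ among elements not below ${\sim}j$ then forces $z \in \mathfrak{core}\breve{R}(x)$; assertion (iii) is deduced from (i)--(ii) via $g(g(j))=j$, and (iv) by elimination. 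You argue bottom-up: you take an arbitrary $z \in \mathfrak{core}\breve{R}(x)$ and verify that $(R(z)^\DOWN,R(z)^\UP)$ is both a member of and a lower bound for the meet set $\{y \mid y \nleq {\sim}j\}$, hence its minimum, hence equal to $g(j)$; and you likewise compute $g$ explicitly on both kinds of lower-type elements (a fixed point when $x \in \mathcal{S}$, a strict drop when $x \notin \mathcal{S}$). Your route costs more hand verification (the two ``membership facts'' $R(w)^\DOWN \subseteq A$ and $R(w)^\UP \subseteq A^{\Up\UP} \subseteq B$ for $w \in A \in \wp(U)^\DOWN$, both of which check out), but it buys two things: it never needs to know in advance that $g(j)$ is join-irreducible, and, more substantively, it yields \emph{both} inclusions of (iii) and (iv) simultaneously via the partition $\mathcal{J} = \mathcal{J}^- \sqcup \mathcal{J}^\circ \sqcup \mathcal{J}^+$. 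This matters: the paper's proof of (iii) establishes only the inclusion of $\mathcal{J}^+$ into the displayed set, and its proof of (iv) then tacitly invokes the converse (that every lower-type element with $x \notin \mathcal{S}$ lies in $\mathcal{J}^+$); your direct computation of $g$ on exactly those elements is what supplies this half. One step you should spell out when you feed (iii) back into (ii): the classification gives $g(\emptyset,\{x\}^\UP) = (\{y\}^{\Up\DOWN},\{y\}^{\Up\UP})$ with $y \notin \mathcal{S}$ and $\{y\}^\Up$ completely join-irreducible for \emph{some} $y$, while your meet computation names the pair by your chosen $z$; to transfer the labels to $z$ you need $R(z) = R(y)$, which follows by applying $^\Up$ to the equality of first coordinates and using (GC6), after which $z \in \mathcal{S}$ would force $z = y \notin \mathcal{S}$ by reflexivity, a contradiction. (Alternatively, $z \notin \mathcal{S}$ follows directly, as in the paper, from $x \in R(z)$ and $x \notin \mathcal{S}$.) This is a one-line repair, not a gap in the approach.
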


\begin{proof}
(i) Let $j=(A,B)\in \mathcal{J}^{-}$. Then, 
by Lemma~\ref{lem:KleeneNeg}(ii),
$j\leq {\sim} j$ and 
\[
j = j \wedge {\sim} j = (A,B)\wedge (B^{c},A^{c}) 
  = (A \cap B^c, (B \cap A^c)^{\Down \UP}) 
  = (\emptyset, (B \setminus A)^{\Up\DOWN}),
\]
because $A\subseteq B$. In view of Theorem~\ref{thm:join_irreducibles}, this implies that $j=(\emptyset,\{x\}^{\UP})$ for some 
$x \notin \mathcal{S}$ such that $\{x\}^\UP$ is completely join-irreducible in $\wp (U)^\UP$.

Conversely, if $j = (\emptyset ,\{x\}^{\blacktriangle })$, where 
$\{x\}^\UP$ is completely join-irreducible in $\wp(U)^\UP$ and 
$x \notin \mathcal{S}$, then $j \in \mathcal{J}$
by Proposition~\ref{prop:join_irred}.
We have  
${\sim} j = (U \setminus \{x\}^\UP,U)$ and $j \leq {\sim} j$.
We obtain $j\in \mathcal{J}^{-}$ by
Lemma~\ref{lem:KleeneNeg}(ii).

(ii) As $(\emptyset ,\{x\}^\UP) \in \mathcal{J}^-$,
Theorem~\ref{thm:join_irreducibles} gives
$x\notin \mathcal{S}$.
Lemma~\ref{lem:KleeneNeg}(i) yields that 
$g(\emptyset,\{x\}^\UP)\in \mathcal{J}^{+}$. Therefore, 
in view of Theorem~\ref{thm:join_irreducibles}, we get 
$g(\emptyset ,\{x\}^\UP) = (\{z\}^{\Up \DOWN}, \{z\}^{\Up\UP})$ 
for some $z\in U$.
We will show that $z \in \mathfrak{core}\breve{R}(x)$, $z \notin \mathcal{S}$,
and $\{z\}^\Up$ is completely join-irreducible in 
$\wp(U)^\Up$.

Let us observe that for any $a \in U$, 
$\{a\}^{\Up\DOWN} \cap \{x\}^\UP \neq \emptyset$ 
is equivalent to $a \in \{x\}^\UP = \breve{R}(x)$. 
This can be seen as follows. Since $a\in \{a\}^{\Up \DOWN}$,
$a\in \{x\}^\UP$ implies 
$\{a\}^{\Up \DOWN} \cap \{x\}^\UP \neq \emptyset$. 
Conversely, $\{a\}^{\Up \DOWN} \cap \{x\}^\UP \neq \emptyset$
implies that there exists $b\in \{a\}^{\Up \DOWN} \cap \breve{R}(x)$.
Thus, $x \in R(b)$. Because $b \in \{a\}^{\Up \DOWN}$,
we have $R(b) \subseteq \{a\}^\Up = R(a)$. We have
$x \in R(a)$ and  $a \in \breve{R}(x) = \{x\}^\UP$. 

Because $g(j) \nleq {\sim} j$, we get 
$(\{z\}^{\Up \DOWN}, \{z\}^{\Up\UP}) \nleq (U\setminus \{x\}^\UP, U)\}$,
which is equivalent to 
$\{z\}^{\Up\DOWN} \cap \{x\}^\UP \neq \emptyset$. 
Hence $z\in \breve{R}(x) = \{x\}^\UP$. 
Let $a\in \breve{R}(x)$. Then $\{a\}^{\Up \DOWN} \cap \{x\}^\UP \neq \emptyset$ holds. We have $\{a\}^{\Up \DOWN} \nsubseteq \{x\}^{\UP c}$ 
and
\[ (\{a\}^{\Up \DOWN}, \{a\}^{\Up\UP}) \nleq (\{x\}^{\UP c}, U) =
{\sim} (\emptyset, \{x\}^\UP). \]
Because $g(\emptyset,\{x\}^\UP)$ is the least completely 
join-irreducible element which is not below 
${\sim} (\emptyset, \{x\}^\UP)$, we have that 
$(\{z\}^{\Up \DOWN}, \{z\}^{\Up\UP}) \leq 
(\{a\}^{\Up \DOWN}, \{a\}^{\Up\UP})$. Then 
$\{z\}^{\Up \DOWN} \subseteq \{a\}^{\Up \DOWN}$ yields 
\[ R(z) = \{z\}^\Up= \{z\}^{\Up \DOWN \Up}
\subseteq \{a\}^{\Up \DOWN \Up} = \{a\}^\Up = R(a).
\]
Summarising, we obtained that $z\in \breve{R}(x)$ and $R(z)\subseteq R(a)$ 
for all $a\in \breve{R}(x)$. If we replace $R$ with $\breve{R}$ in
Lemma~\ref{lem:core}(ii), we get 
$z\in \mathfrak{core}\breve{R}(x)$. 
Since $x\in R(z)$, $z$ cannot be in $\mathcal{S}$. Indeed, 
$z \in \mathcal{S}$ would imply $x=z$ contradicting
$x \notin \mathcal{S}$. 

Because $(\emptyset,\{x\}^\UP)$ is
completely join-irreducible, $\{x\}^\UP$ is completely
join-irreducible and completely join-prime in $\wp(U)^\UP$.
Since $z \in \mathfrak{core}\breve{R}(x)$, 
then, by Lemma~\ref{lem:core}(iv), $x \in \mathfrak{core}R(z)$.
Since the core of $R(z)$ is nonempty, we have that
$R(z) = \{z\}^\Up$ is completely join-irreducible in $\wp(U)^\Up$.

(iii) Let $j\in \mathcal{J}^{+}$. Then, by Lemma~\ref{lem:KleeneNeg}, 
$g(j) \in \mathcal{J}^{-}$. By (i), 
$g(j) = (\emptyset ,\{x\}^\UP)$ for some $x\notin \mathcal{S}$
such that $\{x\}^\UP$ is completely join-irreducible in $\wp(U)^\UP$. 
In view of (ii), we obtain 
$j = g(g(j)) = (\{z\}^{\Up \DOWN}, \{z\}^{\Up\UP})$ for some
$z \notin \mathcal{S}$ such that
$\{z\}^\Up$ is completely join-irreducible in $\wp (U)^\Up$.

(iv) Since $j = g(j)$, we have $j \notin \mathcal{J}^{-}$. 
This means that $j=(\{x\}^{\Up \DOWN},\{x\}^{\Up\UP})$, where 
$\{x\}^\Up$ is completely join-irreducible in $\wp(U)^\Up$. 
Because $j\notin \mathcal{J}^{+}$, we must have $x \in \mathcal{S}$ and 
$R(x) = \{x\}^\Up = \{x\}$. Now
$\{x\}^{\Up\DOWN} = \{x\}^\DOWN = \{x\}$ and
$\{x\}^{\Up\UP} = \{x\}^\UP$.
\end{proof}

A \emph{Heyting algebra} $L$ is a bounded lattice such that for all 
$a,b\in L$, there is a greatest element $x \in L$ such that
\[
a\wedge x\leq b.
\]
This element $x$ is the \emph{relative pseudocomplement} of $a$ with
respect to $b$, and it is denoted by $a\Rightarrow b$. It is well known that
any completely distributive lattice $L$ determines a Heyting algebra
$(L,\vee,\wedge,\Rightarrow,0,1)$. 

According to R.~Cignoli \cite{Cign86}, a \emph{quasi-Nelson algebra} \label{def:quasi-Nelson}
is a Kleene algebra  $(L,\vee,\wedge,\sim,0,1)$
such that for  all $a,b \in A$, the \emph{weak relative pseudocomplement} of $a$ with respect to $b$
\begin{equation} \label{eq:NelsonImplication}
a \to b :=  a \Rightarrow ({\sim}a \vee b)
\end{equation}
exists. This means that every Kleene algebra whose underlying lattice is a Heyting algebra forms a quasi-Nelson algebra. In particular, 
any Kleene algebra defined on a completely distributive lattice is a 
quasi-Nelson algebra.
Thus, Propositions \ref{prop:completely_distributive} 
and \ref{Prop:Kleene} have the following corollary.

\begin{corollary}\label{cor:quasi_nelson}
Let $R$ be a reflexive relation on $U$. 
If any of $\wp(U)^\UP$, $\wp(U)^\DOWN$,  $\wp(U)^\Up$, 
$\wp(U)^\Down$  is completely distributive, then
$\mathrm{DM(RS)}$ forms a quasi-Nelson algebra.
\end{corollary}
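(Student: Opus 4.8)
The plan is simply to chain together the results already established, since the corollary is immediate once the relevant facts are lined up. First I would assume that one of the four approximation lattices $\wp(U)^\UP$, $\wp(U)^\DOWN$, $\wp(U)^\Up$, $\wp(U)^\Down$ is completely distributive. By Proposition~\ref{prop:completely_distributive}, this assumption is equivalent to $\mathrm{DM(RS)}$ itself being completely distributive. In particular, complete distributivity entails ordinary distributivity, so $\mathrm{DM(RS)}$ is a distributive lattice.

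Next I would invoke Proposition~\ref{Prop:Kleene}: since $R$ is reflexive and $\mathrm{DM(RS)}$ is a distributive lattice, the structure $(\mathrm{DM(RS)},\vee,\wedge,{\sim},(\emptyset,\emptyset),(U,U))$ is a Kleene algebra, with ${\sim}(A,B)=(B^c,A^c)$. To upgrade this Kleene algebra to a quasi-Nelson algebra, I would use the standard fact recalled just before the statement: any completely distributive lattice determines a Heyting algebra, so the relative pseudocomplement $a\Rightarrow b$ always exists. Consequently the weak relative pseudocomplement $a\to b := a\Rightarrow({\sim}a\vee b)$ of \eqref{eq:NelsonImplication} exists for all $a,b\in\mathrm{DM(RS)}$. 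By definition, a Kleene algebra in which this operation exists is a quasi-Nelson algebra, so $\mathrm{DM(RS)}$ forms a quasi-Nelson algebra, as required.

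There is no genuine obstacle in this argument; it is a routine composition of the cited results. The only two points worth making explicit are that complete distributivity implies ordinary distributivity, which is what allows Proposition~\ref{Prop:Kleene} to apply, and that complete distributivity simultaneously supplies the Heyting structure needed for the weak relative pseudocomplement to exist. Both are standard and were already noted in the discussion preceding the corollary, so the proof reduces to citing Propositions~\ref{prop:completely_distributive} and~\ref{Prop:Kleene} together with these observations.
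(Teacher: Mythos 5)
Your proof is correct and follows exactly the paper's own argument: the paper derives this corollary by combining Proposition~\ref{prop:completely_distributive} (to get complete distributivity of $\mathrm{DM(RS)}$) with Proposition~\ref{Prop:Kleene} (to get the Kleene structure), plus the observation that a completely distributive lattice is a Heyting algebra, so the weak relative pseudocomplement in \eqref{eq:NelsonImplication} exists. The two explicit points you flag---that complete distributivity implies distributivity, and that it supplies the Heyting structure---are precisely the glue the paper uses in the paragraph preceding the corollary.
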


Let $(L,\vee,\wedge,{\sim},0,1)$ be a completely distributive Kleene algebra. We say that the set $\mathcal{J}$ of its completely 
join-irreducible elements satisfies the \emph{interpolation property} if for any $p,q \in \mathcal{J}$ such that $p,q \leq g(p),g(q)$, there is $k \in \mathcal{J}$ such that
\[p,q \leq k \leq g(p),g(q)  .\]

An element $a$ of a complete lattice $L$ is called \emph{compact} if
$a\leq \bigvee X$ for some $X\subseteq L$ implies that 
$a\leq X^{\bullet}$ for some finite subset $X^{\bullet} \subseteq X$. 
A complete lattice is \emph{algebraic} if its every element $x$ is the supremum of the compact elements below $x$.
By  \cite[Theorem~10.29]{Davey02}, 
a De~Morgan algebra is algebraic if and only it is 
completely distributive and spatial.
 
In \cite[Proposition~3.5]{JR11} we proved that if 
$(L,\vee, \wedge, {\sim}, 0, 1)$ is a Kleene algebra defined on an algebraic lattice, then $(L,\vee, \wedge, \to, {\sim}, 0, 1)$
is a Nelson algebra, where the operation $\to$ is defined by \eqref{eq:NelsonImplication} if and only if $\mathcal{J}$ satisfies the interpolation property.

The following lemma is useful when proving 
Theorem~\ref{thm:characterization}.

\begin{lemma} \label{lem:pre_interpolation}
Suppose that $R$ is a reflexive relation on $U$
such that $\mathrm{DM(RS)}$ is completely distributive. Let
$\{x\}^\UP$ and $\{y\}^\UP$ be completely join-irreducible elements of 
$\wp(U)^\UP$ and $x,y\notin \mathcal{S}$. 
The following assertions are equivalent:

\begin{enumerate}[label={\rm (\roman*)}, itemsep=4pt]
\item There exists an element $u\in U$ with 
$\{x\}^\UP,\{y\}^\UP \subseteq \{u\}^\UP$.

\item $(\emptyset,\{x\}^\UP),(\emptyset,\{y\}^\UP)
\leq g(\emptyset,\{x\}^\UP), g(\emptyset,\{y\}^\UP)$.
\end{enumerate}
\end{lemma}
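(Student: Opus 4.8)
The plan is to reduce condition (ii) to a pair of set inclusions about neighbourhoods, and then settle the two implications by producing a single common witness $u$, exploiting the absorption property built into the definition of a core element. The first step is a reduction. By Proposition~\ref{prop:partition_of_J}(i), both $(\emptyset,\{x\}^\UP)$ and $(\emptyset,\{y\}^\UP)$ lie in $\mathcal{J}^-$, hence each is strictly below its own image under $g$; so the inequalities $(\emptyset,\{x\}^\UP)\leq g(\emptyset,\{x\}^\UP)$ and $(\emptyset,\{y\}^\UP)\leq g(\emptyset,\{y\}^\UP)$ hold automatically, and (ii) is equivalent to the two cross-inequalities $(\emptyset,\{x\}^\UP)\leq g(\emptyset,\{y\}^\UP)$ and $(\emptyset,\{y\}^\UP)\leq g(\emptyset,\{x\}^\UP)$. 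Fixing core elements $z_x\in\mathfrak{core}\breve{R}(x)$ and $z_y\in\mathfrak{core}\breve{R}(y)$ (which exist by Proposition~\ref{prop:partition_of_J}(ii)), the same proposition gives $g(\emptyset,\{x\}^\UP)=(\{z_x\}^{\Up\DOWN},\{z_x\}^{\Up\UP})$ and $g(\emptyset,\{y\}^\UP)=(\{z_y\}^{\Up\DOWN},\{z_y\}^{\Up\UP})$. Comparing second coordinates (the first-coordinate inclusion $\emptyset\subseteq\{z\}^{\Up\DOWN}$ being trivial), the cross-inequalities become $\{y\}^\UP\subseteq\{z_x\}^{\Up\UP}$ and $\{x\}^\UP\subseteq\{z_y\}^{\Up\UP}$.

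I would also record two dictionary items. First, the translation $c\in\{z\}^{\Up\UP}=R(z)^\UP\iff R(c)\cap R(z)\neq\emptyset$. Second, the content of $z_x\in\mathfrak{core}\breve{R}(x)$: namely $z_x\in\breve{R}(x)$, together with the absorption property that $z_x\in\breve{R}(w)$ implies $\breve{R}(x)\subseteq\breve{R}(w)$; analogously for $z_y$. Note throughout that $z_x\in\breve{R}(u)\iff u\in R(z_x)$.

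For (i)$\Rightarrow$(ii), suppose $u\in U$ satisfies $\breve{R}(x),\breve{R}(y)\subseteq\breve{R}(u)$. Since $z_x\in\breve{R}(x)\subseteq\breve{R}(u)$, we have $u\in R(z_x)$; and each $c\in\breve{R}(y)\subseteq\breve{R}(u)$ gives $u\in R(c)$, so $u\in R(c)\cap R(z_x)$ and hence $c\in\{z_x\}^{\Up\UP}$. This yields $\{y\}^\UP\subseteq\{z_x\}^{\Up\UP}$, and the symmetric argument (using $z_y$ and $c\in\breve{R}(x)\subseteq\breve{R}(u)$) yields $\{x\}^\UP\subseteq\{z_y\}^{\Up\UP}$, giving (ii). For (ii)$\Rightarrow$(i), I would use only $\{y\}^\UP\subseteq\{z_x\}^{\Up\UP}$: since $z_y\in\mathfrak{core}\breve{R}(y)\subseteq\breve{R}(y)=\{y\}^\UP\subseteq R(z_x)^\UP$, the translation gives $R(z_x)\cap R(z_y)\neq\emptyset$. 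Choosing $u$ in this intersection, $u\in R(z_x)$ means $z_x\in\breve{R}(u)$, whence $\breve{R}(x)\subseteq\breve{R}(u)$ by the absorption property of $z_x$; likewise $u\in R(z_y)$ gives $z_y\in\breve{R}(u)$ and $\breve{R}(y)\subseteq\breve{R}(u)$. Thus this $u$ witnesses (i).

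The main obstacle is organizational rather than conceptual: one must unwind $g$ correctly through Proposition~\ref{prop:partition_of_J}(ii) and keep straight which core element sits inside which neighbourhood, since the core facts of Lemma~\ref{lem:core} must be applied with the roles of $R$ and $\breve{R}$ interchanged. Once the common witness $u\in R(z_x)\cap R(z_y)$ is produced, both directions become immediate; the conceptual heart is that membership of $u$ in $R(z_x)$ and $R(z_y)$ forces, via the absorption property of the core elements, the entire neighbourhoods $\breve{R}(x)$ and $\breve{R}(y)$ into $\breve{R}(u)$.
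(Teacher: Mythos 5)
Your proposal is correct, and its overall skeleton matches the paper's: both proofs invoke Proposition~\ref{prop:partition_of_J}(i) to place $(\emptyset,\{x\}^\UP)$, $(\emptyset,\{y\}^\UP)$ in $\mathcal{J}^-$, use part (ii) of that proposition to write $g(\emptyset,\{x\}^\UP)=(\{z_x\}^{\Up\DOWN},\{z_x\}^{\Up\UP})$ with $z_x\in\mathfrak{core}\breve{R}(x)$ (likewise for $y$), and reduce condition (ii) to second-coordinate inclusions; your (i)$\Rightarrow$(ii) is essentially the paper's argument. Where you genuinely diverge is the witness extraction in (ii)$\Rightarrow$(i). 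The paper writes $\{z_x\}^{\Up\UP}=R(z_x)^\UP=\bigcup\{\{w\}^\UP \mid w\in R(z_x)\}$ and then invokes the fact that $\{y\}^\UP$ is completely join-\emph{prime} in $\wp(U)^\UP$ (which rests on the coincidence of completely join-irreducible and completely join-prime elements in completely distributive lattices) to obtain a single $u\in R(z_x)$ with $\{y\}^\UP\subseteq\{u\}^\UP$; the core property of $z_x$ alone then gives $\{x\}^\UP\subseteq\{u\}^\UP$. You instead evaluate the inclusion $\{y\}^\UP\subseteq R(z_x)^\UP$ at the single point $z_y\in\mathfrak{core}\breve{R}(y)\subseteq\{y\}^\UP$, conclude $R(z_x)\cap R(z_y)\neq\emptyset$, pick $u$ in that intersection, and apply the absorption property of \emph{both} core elements symmetrically. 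Your route is slightly more elementary: it never needs the join-irreducible/join-prime coincidence, using complete distributivity only through Proposition~\ref{prop:partition_of_J}, and it treats $x$ and $y$ symmetrically; the paper's route uses only one core element in this direction but pays for it by appealing to join-primeness. Both arguments are sound and of comparable length.
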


\begin{proof}
In view of Proposition~\ref{prop:partition_of_J}(i), 
$(\emptyset,\{x\}^\UP)$ and $(\emptyset,\{y\}^\UP)$ belong to the set
$\mathcal{J}^-$. Furthermore, 
$g(\emptyset,\{x\}^\UP)=(\{v_{x}\}^{\Up\DOWN},\{v_x\}^{\Up\UP})$ and $g(\emptyset,\{y\}^{\UP})=(\{v_{y}\}^{\Up\DOWN},\{v_{y}\}^{\Up\UP})$, 
where $v_{x}\in \mathfrak{core}\breve{R}(x)$ and 
$v_{y}\in \mathfrak{core}\breve{R}(y)$ can be chosen arbitrarily 
in virtue of Proposition~\ref{prop:partition_of_J}(ii)

\smallskip\noindent%
(i)$\Rightarrow$(ii). Condition (i) implies $v_{x}\in \breve{R}(x) =
\{x\}^\UP \subseteq\{u\}^\UP = \breve{R}(u)$ and
$v_{y}\in \breve{R}(y)=\{y\}^\UP \subseteq \{u\}^\UP =\breve{R}(u)$, 
whence we get $\{u\}\subseteq R(v_{x})=\{v_{x}\}^\Up$
and $\{u\} \subseteq R(v_{y})=\{v_{y}\}^\Up$. Hence,
$\{x\}^\UP,\{y\}^\UP\subseteq\{u\}^\UP \subseteq
\{v_{x}\}^{\Up\UP},\{v_{y}\}^{\Up\UP}$, and this yields 
$(\emptyset,\{x\}^\UP),(\emptyset,\{y\}^\UP) \leq 
(\{v_{x}\}^{\Up\DOWN},\{v_{x}\}^{\Up\UP}),
(\{v_{y}\}^{\Up\DOWN}, \{v_{y}\}^{\Up\UP})$, which
is equivalent to (ii).

\smallskip\noindent%
(ii)$\Rightarrow$(i). If $(\emptyset,\{y\}^\UP)\leq
g(\emptyset,\{x\}^\UP)$, then $\{y\}^\UP
\subseteq \{v_{x}\}^{\Up\UP}$, that is, 
\[ \{y\}^\UP \subseteq R(v_{x})^\UP= 
\bigcup \{  \{z\}^\UP \mid z\in R(v_{x})\}.\] 
Since $\{y\}^\UP$ is completely join-prime in $\wp(U)^\UP$,
$\{y\}^\UP \subseteq \{u\}^\UP$ for some $u \in R(v_{x})$.
Moreover, we have $v_x \in \breve{R}(u)$. Because 
$v_x \in \mathfrak{core}\breve{R}(x)$, we have that $v_x \in \breve{R}(u)$
implies $\breve{R}(x) \subseteq \breve{R}(u)$ by the
definition of the core, that is, $\{x\}^\UP \subseteq \{u\}^\UP$.
We have now proved 
$\{x\}^\UP,\{y\}^\UP \subseteq \{u\}^\UP$.
\end{proof}

The next theorem is our main result of this section.

\begin{theorem} \label{thm:characterization}
Let $R$ be a reflexive relation on $U$ such that 
$\mathrm{DM(RS)}$ is completely distributive and
spatial. The following assertions are equivalent:

\begin{enumerate}[label={\rm (\roman*)}, itemsep=4pt]
\item $\mathrm{DM(RS)}$ forms a Nelson algebra.

\item Let $x,y \notin \mathcal{S}$ be such that
$\{x\}^\UP,\{y\}^\UP$ are completely join-irreducible in $\wp(U)^\UP$.
If $\{x\}^\UP,\{y\}^\UP \subseteq \{u\}^\UP$ for some 
$u\in U$, then there exists an element $z\in U$ such that 
$\{z\}^\UP$ is completely join-irreducible in $\wp(U)^\UP$ and
$\{x\}^\UP,\{y\}^\UP \subseteq \{z\}^\UP$.
\end{enumerate}
\end{theorem}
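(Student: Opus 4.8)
The plan is to reduce assertion (i) to the interpolation property of $\mathcal{J}$ and then identify that property with condition (ii). Since $\mathrm{DM(RS)}$ is completely distributive it is distributive, so by Proposition~\ref{Prop:Kleene} it carries the Kleene structure with ${\sim}(A,B)=(B^c,A^c)$; being also spatial, it is algebraic by \cite[Theorem~10.29]{Davey02}. Hence \cite[Proposition~3.5]{JR11} applies and says that (i) holds if and only if $\mathcal{J}$ satisfies the interpolation property. So I would devote the whole proof to showing that the interpolation property is equivalent to (ii), using the partition $\mathcal{J}=\mathcal{J}^-\cup\mathcal{J}^\circ\cup\mathcal{J}^+$ from Proposition~\ref{prop:partition_of_J} together with the translation supplied by Lemma~\ref{lem:pre_interpolation}.

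For the direction (ii)$\Rightarrow$(interpolation), I would take $p,q\in\mathcal{J}$ with $p,q\leq g(p),g(q)$; then necessarily $p,q\in\mathcal{J}^-\cup\mathcal{J}^\circ$. If either lies in $\mathcal{J}^\circ$ the inequalities collapse (if both do then $p=q$; if say $p\in\mathcal{J}^\circ$ then $q\leq g(p)=p$), so $k=p$ already interpolates. The substantial case is $p=(\emptyset,\{x\}^\UP)$, $q=(\emptyset,\{y\}^\UP)$ in $\mathcal{J}^-$, where Lemma~\ref{lem:pre_interpolation} converts the hypothesis into the existence of $u$ with $\{x\}^\UP,\{y\}^\UP\subseteq\{u\}^\UP$; condition (ii) then yields $z$ with $\{z\}^\UP$ completely join-irreducible and $\{x\}^\UP,\{y\}^\UP\subseteq\{z\}^\UP$. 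I would build the interpolant from $z$: if $z\in\mathcal{S}$ take $k=(\{z\},\{z\}^\UP)\in\mathcal{J}^\circ$, so $p,q\leq k$ is immediate and applying the order-reversing $g$ (with $g(k)=k$) gives $k\leq g(p),g(q)$; if $z\notin\mathcal{S}$ take $k=(\emptyset,\{z\}^\UP)\in\mathcal{J}^-$, where $p,q\leq k$ is again immediate and $k\leq g(p),g(q)$ follows by feeding the pairs $\{x\}^\UP,\{z\}^\UP$ and $\{y\}^\UP,\{z\}^\UP$ (with witness $u=z$) back into Lemma~\ref{lem:pre_interpolation}.

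For the converse (interpolation)$\Rightarrow$(ii), I would start from $x,y\notin\mathcal{S}$ with $\{x\}^\UP,\{y\}^\UP$ completely join-irreducible and contained in some $\{u\}^\UP$, put $p=(\emptyset,\{x\}^\UP)$ and $q=(\emptyset,\{y\}^\UP)$, and use Lemma~\ref{lem:pre_interpolation} to obtain $p,q\leq g(p),g(q)$. The interpolation property then gives $k$ with $p,q\leq k\leq g(p),g(q)$. If $k\in\mathcal{J}^+$ I would replace it by $g(k)$, which interpolates as well (apply $g$ to all four inequalities) and lands in $\mathcal{J}^-$ by Lemma~\ref{lem:KleeneNeg}(i); thus I may assume $k\in\mathcal{J}^-\cup\mathcal{J}^\circ$. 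By Proposition~\ref{prop:partition_of_J} such a $k$ equals $(\emptyset,\{z\}^\UP)$ or $(\{z\},\{z\}^\UP)$ with $\{z\}^\UP$ completely join-irreducible, and $p,q\leq k$ forces $\{x\}^\UP,\{y\}^\UP\subseteq\{z\}^\UP$, which is exactly (ii).

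I expect the main obstacle to be the verification $k\leq g(p),g(q)$ in the $(\Leftarrow)$ direction when $z\notin\mathcal{S}$: whereas $p,q\leq k$ falls out of the inclusions, the upper bound involves the explicit shape $g(\emptyset,\{x\}^\UP)=(\{v_x\}^{\Up\DOWN},\{v_x\}^{\Up\UP})$ with $v_x\in\mathfrak{core}\breve{R}(x)$ from Proposition~\ref{prop:partition_of_J}(ii), and the clean way to sidestep a direct core manipulation is the self-referential use of Lemma~\ref{lem:pre_interpolation}. The self-duality of $(\mathcal{J},\leq)$ under $g$ is the organizing device throughout, letting me relocate a $\mathcal{J}^+$-interpolant into $\mathcal{J}^-$ and dispatch $\mathcal{J}^\circ$-interpolants by order reversal, which keeps the case analysis symmetric and finite.
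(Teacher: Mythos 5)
Your proposal is correct and follows essentially the same route as the paper's proof: reduce (i) to the interpolation property of $\mathcal{J}$ via \cite[Proposition~3.5]{JR11} (using complete distributivity for the quasi-Nelson/Kleene structure and spatiality for algebraicity), then translate interpolation into condition (ii) by means of Proposition~\ref{prop:partition_of_J} and Lemma~\ref{lem:pre_interpolation}, with the same case analysis over $\mathcal{J}^-\cup\mathcal{J}^\circ\cup\mathcal{J}^+$ and the same trick of replacing a $\mathcal{J}^+$ interpolant by $g(k)$. The only cosmetic difference is in the $z\notin\mathcal{S}$ case of the (ii)$\Rightarrow$(interpolation) direction, where you obtain $k\leq g(p),g(q)$ by re-invoking Lemma~\ref{lem:pre_interpolation} with witness $u=z$, while the paper gets it more directly from $k\leq g(k)\leq g(p),g(q)$, using $k\in\mathcal{J}^-$ and the order-reversal of $g$; both arguments are valid.
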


\begin{proof} 
(i)$\Rightarrow$(ii). Assume that (i) holds. Let $x,y,u$ be as in 
the assumption of (ii). Then, by Proposition~\ref{prop:partition_of_J}, 
$(\emptyset,\{x\}^\UP)$ and $(\emptyset,\{y\}^\UP)$ belong to $\mathcal{J}^{-}$. Lemma~\ref{lem:pre_interpolation} implies that
$(\emptyset,\{x\}^\UP)$, $(\emptyset,\{y\}^\UP)\leq
g(\emptyset,\{x\}^\UP)$, $g(\emptyset,\{y\}^\UP)$. Let us
set $p:=(\emptyset,\{x\}^\UP)$, $q:=(\emptyset,\{y\}^\UP)$. Then, 
$p,q\leq g(p),g(q)$. As $\mathrm{DM(RS)}$ is a Nelson algebra, 
by the interpolation property, there exists $k\in\mathcal{J}$ such
that $p,q\leq k\leq g(p),g(q)$. Then $p,q\leq g(k)\leq g(p),g(q)$ also holds.

If $k\in\mathcal{J}^{\circ}$, then in view of 
Proposition~\ref{prop:partition_of_J},
$k=(\{z\},\{z\}^\UP)$ for some $z\in \mathcal{S}$. Therefore,
$\{z\}^\UP$ is a completely join-irreducible element of $\wp(U)^\UP$ by
Lemma~\ref{lem:singleton_join_irr} and 
$\{x\}^\UP, \{y\}^\UP \subseteq \{z\}^\UP$.
If $k \in \mathcal{J}^-$, then $k = (\emptyset,\{z\}^\UP)$ for some
$\{z\}^\UP \in \mathcal{J}(\wp(U)^\UP)$. Now $p,q \leq k$ gives 
$\{x\}^\UP, \{y\}^\UP \subseteq \{z\}^\UP$. 
Finally, if $k \in \mathcal{J}^+$, then 
$g(k) \in \mathcal{J}^-$. Thus, $g(k) =  (\emptyset,\{z\}^\UP)$ for some
$\{z\} \in \mathcal{J}(\wp(U)^\UP)$. Since $p,g \leq g(k)$,
we have $\{x\}^\UP, \{y\}^\UP \subseteq \{z\}^\UP$. 
Thus (ii) holds in all possible cases.

\medskip\noindent%
(ii)$\Rightarrow$(i). Since $\mathrm{DM(RS)}$ is completely distributive, 
it forms a quasi-Nelson algebra.
Since it is also spatial, it is algebraic  as we already
noted. Hence, to prove (i) it is enough to show that the 
completely join-irreducible elements of $\mathrm{DM(RS)}$ satisfy 
the interpolation property. Now, assume (ii) and
suppose that $p,q\leq g(p),g(q)$ holds for some $p,q\in\mathcal{J}$. 
We will show that there exists an element $k\in\mathcal{J}$, such that
\[
p,q\leq k\leq g(p),g(q).
\]
Observe that we may exclude the cases $p=g(p)$ and $q=g(q)$.
For instance, if $p = g(p)$, then $k = p$ and the interpolation 
property holds.
 
Now $p,q<g(p),g(q)$ implies that $p,q\in\mathcal{J}^{-}$. 
By Proposition~\ref{prop:partition_of_J} this means that 
$p =(\emptyset,\{x\}^\UP)$ and $q=(\emptyset,\{y\}^\UP)$ for some 
$\{x\}^\UP$ and $\{y\}^\UP$ that are completely join-irreducible in
$\wp(U)^\UP$. 
Then $(\emptyset,\{x\}^\UP),(\emptyset,\{y\}^\UP)
\leq g(\emptyset,\{x\}^\UP),g(\emptyset,\{y\}^\UP)$ 
yields that that there exist $u\in U$ with $\{x\}^\UP,\{y\}^\UP 
\subseteq \{u\}^\UP$ by Lemma~\ref{lem:pre_interpolation}.
By (ii), there exist an element $z\in U$ such that 
$\{z\}^\UP$ is a completely join-irreducible in $\wp(U)^\UP$ and
$\{x\}^\UP,\{y\}^\UP \subseteq \{z\}^\UP$. 

If $z\notin \mathcal{S}$, then according to 
Proposition~\ref{prop:partition_of_J},
$k = (\emptyset,\{z\}^\UP) \in \mathcal{J}^{-}$. Thus, $p,q\leq k$.
This also implies $k \leq g(k) \leq g(p),g(q)$. 
If $z\in \mathcal{S}$, then $k=(\{z\},\{z\}^\UP)\in
\mathcal{J}^{\circ}$ and $p,q\leq k$. In addition,
$k = g(k) \leq g(p),g(q)$. We have now show that 
$\mathcal{J}$ satisfies the interpolation property.
\end{proof}

Next we provide a couple of examples
showing how Theorem~\ref{thm:characterization}
is used.

\begin{example} \label{Exa:NelsonEquivalence}
Let us continue considering Example~\ref{exa:JoinRS}.
In this example, we observed that $\mathrm{DM(RS)}$ forms a 
regular double Stone algebra. Consequently, it is also a Nelson
algebra, as can be demonstrated using our results.

In Example~\ref{exa:JoinRS}, we observed that $\wp(U)^\UP$ and 
$\wp(U)^\Up$ are finite Boolean lattices. Consequently, they are 
completely distributive and spatial. Thus, $\mathrm{DM(RS)}$ 
inherits these properties, allowing us to apply 
Theorem~\ref{thm:characterization}.

The elements $1$ and $3$ do not belong to $\mathcal{S}$. Both 
$\{1\}^\UP$ and $\{3\}^\UP$ are equal to $\{1,3\}$, and $\{1,3\}$ is 
completely join-irreducible in $\wp(U)^\UP$. Therefore, condition 
(ii) of Theorem~\ref{thm:characterization} holds, and 
$\mathrm{DM(RS)}$ forms a Nelson algebra.
\end{example}

\begin{example} \label{Exa:NelsonSimilar}
Let us recall from Example~\ref{exa:compl_distr} the relation
$R$ with the  $R$-neighbourhoods:
\begin{gather*}
R(1) = \{1,2\}, \ R(2) = \{1,2,3\}, \ R(3) = \{3\}, \ 
R(4) = \{1,3,4\}.
\end{gather*}
Clearly, the elements $1$, $2$, and
$4$ do not belong to $\mathcal{S}$. The sets $\{2\}^\UP$ and
$\{4\}^\UP$ are completely join-irreducible in $\wp(U)^\UP$.
They are both included in $\{1\}^\UP$, which is not completely
join-irreducible in $\wp(U)^\UP$. Since there is no element
$z$ such that  $\{z\}^\UP$ is completely join-irreducible in
$\wp(U)^\UP$ and both  $\{2\}^\UP$ and
$\{4\}^\UP$ are included in $\{z\}^\UP$, by 
Theorem~\ref{thm:characterization} we can state
that $\mathrm{DM(RS)}$ is not an Nelson algebra.

\begin{figure}
\includegraphics[width=50mm]{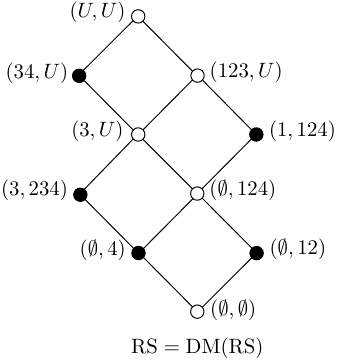}
\caption{The Hasse diagram of $\mathrm{RS} = \mathrm{DM(RS)}$.
Its completely join-irreducible elements are marked with
filled circles.}
\label{fig3:non_nelson}
\end{figure}

The Hasse diagram of $\mathrm{RS} = \mathrm{DM(RS)}$ is given in
Figure~\ref{fig3:non_nelson}. For simplicity, in the figure, 
we denote the subsets of $U$ that differ from  $\emptyset$ and 
$U$ by sequences of their elements. 
Let us consider in detail some set-theoretical calculations:
\begin{description}
\item[Union] The join of
$(\{4\}^\DOWN, \{4\}^\UP) = (\emptyset, \{4\})$ and
$(\{2\}^\DOWN, \{2\}^\UP) = (\emptyset, \{1,2\})$ is
\[ (\emptyset, \{4\}) \vee  (\emptyset, \{1,2\}) = (\emptyset, \{1,2,4\}),\]
which equals  $(\{2,4\}^\DOWN, \{2,4\}^\UP)$.

\item[Intersection] The meet of $(\{1,3,4\}^\DOWN, \{1,3,4\}^\UP) =
(\{3,4\},U)$ and $ (\{1,2,3\}^\DOWN, \{1,2,3\}^\UP) = (\{1,2,3\},U)$ is
\[ (\{3,4\},U) \wedge (\{1,2,3\},U) = (\{3\},U),  \]
which is the same as the rough set $(\{1,3\}^\DOWN, \{1,3\}^\UP)$.

\item[Complement] The De~Morgan complement of $(\{2,4\}^\DOWN, \{2,4\}^\UP) = (\emptyset, \{1,2,4\})$ is 
\[ {\sim} (\emptyset, \{1,2,4\}) =  (\{3\},U),\]
which is equal to $(\{1,3\}^\DOWN, \{1,3\}^\UP)$. 
\end{description}

\medskip\noindent%
Let us consider the map $g$. Now,
\begin{gather*}
g(\emptyset,\{4\}) = (\{3,4\},U), \quad
g(\emptyset,\{1,2\}) = (\{1\},\{124\}), \quad
g(\{3\},\{2,3,4\}) = (\{3\},\{2,3,4\}).
\end{gather*}
This means  that 
\begin{align*}
\mathcal{J}^- &= \{ (\emptyset,\{4\}), (\emptyset, \{1,2\}) \},\\
\mathcal{J}^\circ &= \{ (\{3\},\{2,3,4\}) \},\\
\mathcal{J}^+ &= \{ (\{3,4\}, U), (\{1\}, \{1,2\,4\}) \}.
\end{align*}
Because  $\mathrm{DM(RS)}$ does not form a Nelson algebra, the
interpolation property should not hold. This can seen by setting
$p = (\emptyset,\{4\})$ and $q = (\emptyset, \{1,2\})$.
Now $p,q < g(p),g(q)$, but there is no element $k \in \mathcal{J}$
such that $p,q \leq k \leq g(p),g(q)$.
\end{example}

The following proposition can be considered
as a corollary of Theorem~\ref{thm:characterization}.

\begin{proposition} \label{prop:impl_Nelson}
Let $R$ be a reflexive relation on $U$ such that
$\mathfrak{core}R(x)$ and $\mathfrak{core}\breve{R}(x)$ 
are nonempty for each $x \in U$, then 
$\mathrm{DM(RS)}$ forms a Nelson algebra.
\end{proposition}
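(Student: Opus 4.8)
The plan is to realize this proposition as a direct consequence of Theorem~\ref{thm:characterization}, which already carries out the hard work. Two obligations remain: first, to check that the completely distributive and spatial hypothesis of Theorem~\ref{thm:characterization} is satisfied, and second, to verify its condition (ii).

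First I would establish that $\mathrm{DM(RS)}$ is completely distributive and spatial, and for this Corollary~\ref{cor:spatial_reflexive} suffices. Indeed, fix $x \in U$. Since $\mathfrak{core}R(x) \neq \emptyset$ by hypothesis and trivially $R(x) \subseteq R(x)$, the set $R(x)$ itself occurs among the sets indexing the union on the right-hand side of equation (i) of Corollary~\ref{cor:spatial_reflexive}; as every set in that union is contained in $R(x)$, equation (i) holds. The hypothesis $\mathfrak{core}\breve{R}(x) \neq \emptyset$ yields equation (ii) of the same corollary in exactly the same fashion. Hence $\mathrm{DM(RS)}$ is spatial and completely distributive, so Theorem~\ref{thm:characterization} becomes applicable.

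Next I would verify condition (ii) of Theorem~\ref{thm:characterization}. The decisive observation is that, under the hypothesis, \emph{every} neighbourhood $\{u\}^\UP = \breve{R}(u)$ is completely join-irreducible in $\wp(U)^\UP$: since $\mathfrak{core}\breve{R}(u) \neq \emptyset$, Lemma~\ref{lem:core_irreducibles}(ii) shows $\breve{R}(u)$ is completely join-prime, and completely join-prime elements are always completely join-irreducible. Consequently, given $x,y \notin \mathcal{S}$ with $\{x\}^\UP,\{y\}^\UP$ completely join-irreducible and $\{x\}^\UP,\{y\}^\UP \subseteq \{u\}^\UP$ for some $u \in U$, the element $z := u$ already witnesses condition (ii): $\{z\}^\UP = \{u\}^\UP$ is completely join-irreducible and contains both $\{x\}^\UP$ and $\{y\}^\UP$. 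Thus condition (ii) holds, and Theorem~\ref{thm:characterization} delivers that $\mathrm{DM(RS)}$ forms a Nelson algebra.

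There is no genuine obstacle here; the statement is essentially a clean corollary, and the only point worth flagging is the division of labour between the two halves of the hypothesis. The nonemptiness of $\mathfrak{core}R(x)$, together with that of $\mathfrak{core}\breve{R}(x)$, secures complete distributivity and spatiality through Corollary~\ref{cor:spatial_reflexive}, while the nonemptiness of $\mathfrak{core}\breve{R}(x)$ alone collapses the interpolation-type condition (ii) of Theorem~\ref{thm:characterization} to the trivial choice $z = u$, precisely because it forces all inverse neighbourhoods $\breve{R}(u)$ to be completely join-irreducible in $\wp(U)^\UP$.
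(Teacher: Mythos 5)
Your proposal is correct and follows essentially the same route as the paper's own proof: the hypothesis makes every $R(x)$ and $\breve{R}(x)$ completely join-prime, so Corollary~\ref{cor:spatial_reflexive} gives complete distributivity and spatiality, and condition (ii) of Theorem~\ref{thm:characterization} holds trivially with $z = u$. The only difference is that you spell out the details the paper leaves as "clear," which is a faithful expansion rather than a different argument.
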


\begin{proof} Let $R$ be a reflexive relation on $U$ such that
$\mathfrak{core}R(x)$ and $\mathfrak{core}\breve{R}(x)$ 
are nonempty for each $x \in U$.
Then each $R(x)$ is completely join-prime in $\wp(U)^\Up$ and
each $\breve{R}(x)$ is completely join-prime in $\wp(U)^\UP$
by the definition of the core. By
Corollary~\ref{cor:spatial_reflexive},
$\mathrm{DM(RS)}$ is completely distributive and spatial.

It is now clear that condition (ii) of Theorem~\ref{thm:characterization} is satisfied. Therefore,
$\mathrm{DM(RS)}$ forms a Nelson algebra.
\end{proof}

We end this section by presenting some remarks related to the 
previous studies \cite{ JR11, JarvRadeRivi24, JarRadVer09}.
Let $R$ be a quasiorder on $U$.
As we already mentioned in Remark~\ref{rem:alex} and 
Example~\ref{Exa:ThreeExamples}, 
$\wp(U)^\UP = \wp(U)^\Down$ and
$\wp(U)^\Up = \wp(U)^\DOWN$ form completely distributive spatial
lattices.
In addition, $\{ \{x\}^\UP \}_{x \in U}$ forms the 
set of completely join-irreducible elements of $\wp(U)^\UP$.
Because $\wp(U)^\UP$ is completely distributive, these elements are also completely join-prime.
The same observations hold for $\{\{x\}^\Up\}_{x\in U}$ and 
$\wp(U)^\Up$. 
Therefore, both $R(x)$ and $\breve{R}(x)$ have nonempty core
for each $x \in U$.
By Proposition~\ref{prop:impl_Nelson}, $\mathrm{DM(RS)}$ forms a 
Nelson algebra. It is proved already in \cite{JarRadVer09} 
that $\mathrm{RS}$ is itself a complete lattice. Thus,
$\mathrm{DM(RS)} = \mathrm{RS}$. Note also that we proved in 
\cite{JR11} that $\mathrm{RS}$ forms a 
Nelson algebra defined on an algebraic lattice.
It is also clear that if $R$ is a quasiorder, also $\breve{R}$ is
a quasiorder. This means that $\mathrm{RS}$ defined by $\breve{R}$
have the same lattice-structural properties as the rough set
lattice defined by $R$.

Moreover, for all $x \in U$,
\[ \mathfrak{core}R(x) = \{ w \mid x \, R \, w \text{ and }
w \, R \, x\}.\]
First note that $x \, R \, y$ is equivalent to $R(y) \subseteq R(x)$.
Let $w$ be such that $x \, R \, w$ and $w \, R \, x$. 
If $w \in R(y)$, then $y \,R \, w$. Now $w \, R \, x$ gives
$y \, R \, x$ by transitivity and hence $R(x) \subseteq R(y)$.
This means $w \in \mathfrak{core}R(x)$. 
On the other hand, $w \in \mathfrak{core}R(x)$ 
gives $x \in R(x) \subseteq R(w)$ by Lemma~\ref{lem:core}(v),
that is, $w \, R \, x$. By the definition of core,
$w \in R(x)$, that is, $x \, R \, w$.

\medskip%
However, in case of just reflexive relation, the situation
is different. It is clear that $R$ is reflexive if and only if 
$\breve{R}$ is reflexive. Let us denote by $\mathrm{DM(RS)}\breve{\;}$ 
the completion of rough sets defined by $\breve{R}$. We can now write
the following proposition.

\begin{proposition}
Let $R$ be a reflexive relation on $U$. The following observations hold.
\begin{enumerate}[label={\rm (\roman*)}]
\item  $\mathrm{DM(RS)}\breve{\;}$ is completely distributive 
if and only if $\mathrm{DM(RS)}$ is completely distributive. 
\item  $\mathrm{DM(RS)}\breve{\;}$ is spatial if and only if
 $\mathrm{DM(RS)}$ is spatial.
\item If $\mathfrak{core}R(x)$ and $\mathfrak{core}\breve{R}(x)$ 
are nonempty for each $x \in U$, then 
$\mathrm{DM(RS)}$ and $\mathrm{DM(RS)}\breve{\;}$
form Nelson algebras.
\end{enumerate}
\end{proposition}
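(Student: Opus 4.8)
The plan is to exploit the symmetry between $R$ and $\breve{R}$. Since $R$ is reflexive if and only if $\breve{R}$ is, and since $\breve{\breve{R}} = R$, applying the earlier structural results to $\breve{R}$ merely permutes the four approximation lattices and interchanges the two families of cores, so that each characterizing condition turns out to be symmetric in $R$ and $\breve{R}$.

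First I would record the basic correspondence. Writing $S := \breve{R}$, we have $S(x) = \breve{R}(x)$ and $\breve{S}(x) = R(x)$ for every $x \in U$. Substituting directly into the definitions of the four approximation operators yields $(\wp(U)^\UP)_S = \wp(U)^\Up$, $(\wp(U)^\DOWN)_S = \wp(U)^\Down$, $(\wp(U)^\Up)_S = \wp(U)^\UP$, and $(\wp(U)^\Down)_S = \wp(U)^\DOWN$. In other words, passing from $R$ to $\breve{R}$ permutes the set $\{\wp(U)^\UP, \wp(U)^\DOWN, \wp(U)^\Up, \wp(U)^\Down\}$ within itself. I expect this bookkeeping step to be the main (and essentially only) obstacle; once it is in place, all three parts are immediate.

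For (i), I would apply Proposition~\ref{prop:completely_distributive} to $S = \breve{R}$: the lattice $\mathrm{DM(RS)}\breve{\;}$ is completely distributive if and only if one of $(\wp(U)^\UP)_S$, $(\wp(U)^\DOWN)_S$, $(\wp(U)^\Up)_S$, $(\wp(U)^\Down)_S$ is completely distributive. By the correspondence above this is the condition that one of $\wp(U)^\UP$, $\wp(U)^\DOWN$, $\wp(U)^\Up$, $\wp(U)^\Down$ is completely distributive, which by the same proposition (now for $R$) is equivalent to $\mathrm{DM(RS)}$ being completely distributive. For (ii), I would apply Proposition~\ref{prop:spatial} to $S$: spatiality of $\mathrm{DM(RS)}\breve{\;}$ amounts to spatiality of $(\wp(U)^\UP)_S$ and $(\wp(U)^\Up)_S$, i.e.\ of $\wp(U)^\Up$ and $\wp(U)^\UP$, which is precisely the spatiality criterion for $\mathrm{DM(RS)}$ in the $R$-case.

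For (iii), I would observe that the hypothesis is visibly symmetric: it requires $\mathfrak{core}R(x)$ and $\mathfrak{core}\breve{R}(x)$ to be nonempty for every $x$, and since $\mathfrak{core}S(x) = \mathfrak{core}\breve{R}(x)$ and $\mathfrak{core}\breve{S}(x) = \mathfrak{core}R(x)$, the same hypothesis holds for $S = \breve{R}$. Thus Proposition~\ref{prop:impl_Nelson} applied to $R$ shows that $\mathrm{DM(RS)}$ is a Nelson algebra, and the identical application to $\breve{R}$ shows that $\mathrm{DM(RS)}\breve{\;}$ is a Nelson algebra, completing the proof.
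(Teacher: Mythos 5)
Your proof is correct and follows essentially the same route as the paper: both reduce parts (i) and (ii) to Propositions~\ref{prop:completely_distributive} and~\ref{prop:spatial} via the symmetry $\breve{\breve{R}} = R$, and part (iii) to Proposition~\ref{prop:impl_Nelson} using the symmetry of the core hypothesis. Your explicit bookkeeping of how passing to $\breve{R}$ permutes the four approximation lattices is exactly what the paper's terser proof leaves implicit.
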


\begin{proof}
(i) Let  $\mathrm{DM(RS)}$ be completely distributive. 
This is equivalent to the fact that any of $\wp(U)^\UP$, $\wp(U)^\DOWN$,  $\wp(U)^\Up$, $\wp(U)^\Down$ 
is completely distributive by Proposition~\ref{prop:completely_distributive}.
This is then equivalent to  $\mathrm{DM(RS)}\breve{\;}$
being completely distributive.

(ii) Suppose  $\mathrm{DM(RS)}$ is spatial. Then, by 
Proposition~\ref{prop:spatial}, this is equivalent to that
$\wp(U)^\UP$ and $\wp(U)^\Up$ are spatial. But this is again
equivalent to the situation that $\mathrm{DM(RS)}\breve{\;}$ is 
spatial.

Claim (iii) is clear by Proposition~\ref{prop:impl_Nelson}.
\end{proof}

\section{Some concluding remarks}

In this work, we have especially studied the rough set systems defined by reflexive binary relations. We have shown how reflexive relations can be interpreted as directional similarities. Even though $\mathrm{RS}$ defined by a reflexive relation does not form a lattice, the completion $\mathrm{DM(RS)}$ always is a lattice. Therefore, studying its lattice-theoretical properties opens a new view to rough set theory. In particular, we can perform computations on rough sets because their ``union'' ``intersection'', and ``completion'' always exist, as we have shown in Examples \ref{Exa:set_operations} and \ref{Exa:NelsonSimilar}.

In conclusion, we highlight a significant property of rough sets induced by equivalences or quasi-orders: they enable the definition of Nelson algebras. These algebras, which can be viewed as enriched Heyting algebras or as a subvariety of residuated lattices, allow the application of both intuitionistic logic-based and substructural logic-based deduction methods.

Until now, such rich algebraic structures were not known in the case of general binary relations. Our results show that, under certain conditions, even in the case of reflexive binary relations, the induced rough set systems form Nelson algebras. This makes it possible to extend known deduction methods to more general rough set systems. An example of such a rough set lattice can be seen in Example~\ref{Exa:NelsonEquivalence}.

\section*{Acknowledgements}
We extend our sincere gratitude to the two anonymous referees for their insightful comments and suggestions, which significantly enhanced the quality of this paper.
 
\bibliographystyle{abbrv}
\bibliography{bibliography}
\end{document}